\documentclass[11pt,a4paper]{article}
\usepackage[margin=30truemm]{geometry}
\usepackage{amsmath,amsthm,amssymb,amsfonts}
\usepackage{enumitem}
\usepackage{color} 
\usepackage{dsfont} 
\usepackage{booktabs,multirow}
\usepackage{url}
\usepackage{authblk}
\usepackage{hyperref}
\usepackage{algorithm}
\usepackage{algpseudocode}

\usepackage{titlesec}
\titleformat*{\section}{\large\bfseries}
\titleformat*{\subsection}{\normalsize\bfseries}
\titleformat*{\subsubsection}{\normalsize\itshape}

\usepackage{mleftright}\mleftright 

\usepackage{graphicx}
\graphicspath{{./fig/}}

\usepackage{dsfont}

\newtheorem{theorem}{Theorem}
\newtheorem{proposition}[theorem]{Proposition}%
\newtheorem{lemma}[theorem]{Lemma}%
\newtheorem{corollary}[theorem]{Corollary}

\newtheorem{example}{Example}%
\newtheorem{remark}{Remark}%

\newtheorem{definition}{Definition}%

\newcommand{\bN}{\mathbb{N}}
\newcommand{\bZ}{\mathbb{Z}}
\newcommand{\bR}{\mathbb{R}}
\newcommand{\bE}{\mathbb{E}}

\newcommand{\bP}{\mathbb{P}}

\newcommand{\cF}{\mathcal{F}}

\newcommand{\variance}{\mathbb{V}ar}

\newcommand{\One}{\mathds{1}}

\newcommand{\one}{\mathds{1}}

\newcommand{\probspace}{(\Omega, \cF, \bP)}

\newcommand{\rf}{\{X_t\,: \,t\in \bR^d\}}
\newcommand{\fre}{\text{Fr\'echet}}
\newcommand{\eX}{\hat{X}}

\DeclareMathOperator*{\argmin}{arg\,min}
\DeclareMathOperator*{\argmax}{arg\,max}

\begin{document}

\title{Extrapolation of max--stable random fields with Fr\'echet marginals}

\author{%
\fbox{Vitalii Makogin}$^{1}$ \quad
Evgeny Spodarev$^{1}$ \quad
Ilja Sukhanov$^{1,*}$
}

\date{%
$^{1}$Institute of Stochastics, Ulm University, Helmholtzstra\ss e 18, Ulm, 89069, Baden-W\"urttemberg, Germany.\\
E-mail: \texttt{evgeny.spodarev@uni-ulm.de}, \texttt{ilja.sukhanov@uni-ulm.de}\\
$^{*}$Corresponding author
}

\maketitle

\begin{abstract}
    We propose a method for the prediction of stationary max--stable random fields with $\alpha$-Fr\'echet marginal distribution $H_\alpha$. The method is suitable to cope with heavy tails for $\alpha\in(0,2)$ and is (approximately) exact in marginal distributions. It is based on a recent extrapolation approach via level sets  which requires no  moment assumptions. An explicit connection between the excursion metric and the Davis-Resnick distance is established. The existence of the predictor is proven.
The non-uniqueness of the forecast is demonstrated on several examples. The method is tested on multiple simulated time series and random fields as well as applied to real data of annual maximum precipitation.
\end{abstract}

\textbf{Keywords}: {
 max--stable random field,
 time series,
 Fr\'echet distribution,
 extrapolation,
 prediction,
 interpolation, 
 Brown-Resnick,
 Smith,
 extremal Gaussian,
 stochastic gradient descent,
 $D$--norm,
 level set,
 excursion set.
}

\textbf{MSC2020 Classification}: {60G70, 60G25, 60G10, 60G60}
\medskip

\section{Introduction}\label{sec1}

Motivated by the paper \cite{spodarev2022} on excursion-based extrapolation of random functions, we apply its ideas to the prediction of max-stable random fields. Precisely, we will use observations of a stationary real valued max-stable random field $\rf$ with Fr\'echet marginals to predict its unobserved values. Here, Fr\'echet distribution was chosen exemplarily to showcase that our method can deal with heavy tails. Likewise,  our methodology can be applied (with few modifications) to stationary max-stable random fields with Gumbel or Weibull univariate probability laws. 

Max-stable distributions have been extensively used for modeling extreme events in e.g. hydrological \cite{papalexiou2013, morrison2002, westra2011}, financial \cite{morales2005, yuen2014, zhang2002} and epidemical context \cite{lim2020, wong2020}. Prediction done by conventional kriging techniques (see e.g. \cite{prudhomme1999,Yinetal18}) may be sometimes inadequate, as their smoothing property can hardly reproduce extreme values, and they do not preserve the law of the random field. Conditional sampling methods as in \cite{wang2011, dombry2013, oesting2014, ribatet2013,MR3644315}, on the other hand, preserve the probability law but are computationally expensive. While the literature on the extrapolation of stationary square integrable random fields is vast (see e.g. \cite{stein1999, wackernagel2003, chiles2012geostatistics}), few attempts to introduce a general framework for the prediction of heavy-tailed random fields are confined to quantile regression \cite{Komunjer,gneiting2011quantiles,komunjer2013quantile} as well as level sets' prediction \cite{spodarev2022, das2022}. In the max-stable context, it is important to mention the prediction of max-linear and max-stable autregressive moving average (MARMA)  processes with Fr\'echet marginals by the Davis-Resnick distance projection on the so-called max-stable spaces \cite{DavisResnick,DavisResnick93}.

A prediction functional from \cite{spodarev2022}, $\hat{X}_\lambda=\hat{X}(\mathbb{T}_f,\lambda)$, depends on a deterministic weight vector $\lambda\in \Lambda\subset \bR^n$ and observations of the stationary random field $X=\rf$ at a finite set of points $\mathbb{T}_f =\{ t_1,\ldots, t_n  \} \subset \mathbb{R}^d$.
The predictor $\hat{X}(\mathbb{T}_f,\hat{\lambda})$ of the unobservable random variable $X_{t_0}$ is the value of $\hat{X}_\lambda$ at the "optimal" point $\hat{\lambda}$, which is defined via minimization procedure  
\begin{equation}
    \label{eq1}
\hat{\lambda}=\arg\min_{\lambda\in \Lambda} \left\{\mathcal{E}_F(\hat{X}_\lambda,X_{t_0})+\gamma \omega_2^2(F,Law(\hat{X}_\lambda))\right\} \,.
\end{equation} 
Here, $F$ is the marginal distribution of $X$ and $\mathcal{E}_F$ is the excursion metric that measures the distance between the predictor $\hat{X}_\lambda$ and the unobsered value $X_{t_0}$ (see Definition \ref{excursion_metric_definition}). In addition, the 2-Wasserstein distance $\omega_2$ controls the difference between their (marginal) distributions. Since in most practical cases (apart from Gaussian, cf. \cite{das2022}) it is not possible to find $\hat{\lambda}$ analytically, the functionals $\mathcal{E}_F$ and $\omega_2$ are substituted by their statistical counterparts.

In the present paper, we apply this framework to stationary max-stable random fields with Fr\'echet--distributed marginals ($F=H_\alpha$) as follows. We require that the predictor random variable $\hat{X}_\lambda$  belongs, for every $\lambda\in \Lambda=\bR_+^n$, to the same family of max-stable distributions as $X_{t_0}$ by setting $$\hat{X}_\lambda=\max_{t_j \in \mathbb{T}_f}\{\lambda_j X_{t_j}\}.$$
Here and throughout the paper, we use the notation $\bN_0=\bN\cup\{0\}$ and $\bR^n_+:=[0,\infty)^n\setminus \{\textbf{0}\}$, where \textbf{0} is the origin of coordinates in $\bR^n$. 

Under the law-preserving max-stable prediction $\hat{X}_\lambda \stackrel{d}{=}X_{t_0}$, the latter condition can be incorporated in Equation \eqref{eq1} by minimization over the corresponding subspace 
$\Lambda_L =\{\lambda\in \Lambda:  \omega_2(H_\alpha,Law(\hat{X}_\lambda))=0  \}. $  
The subspace $\Lambda_L$ is the boundary of a convex subset in $\bR^n_+$ with geometry predefined by the dependence structure of the vector $\{ X_t, \; t\in \mathbb{T}_f\}$ (compare  Remark \ref{remGeomInt}) which makes $\Lambda_L$ hard to use in a computational context. For this reason, we use $\Lambda=\bR^n_+$ in \eqref{eq1} instead of $\Lambda=\Lambda_L$.

We proceed to introduce the max-stable predictor in Section \ref{sec3} following a brief primer on extreme value theory in Section \ref{sec2} and an investigation of metrics $\mathcal{E}_{H_\alpha}$ and $\omega_2$ in Section \ref{subsec23}. There, an easy relation between $\mathcal{E}_{H_1}$ and Davis-Resnick distance $d$ (proposed in \cite{DavisResnick,DavisResnick93}  for the prediction of stationary max--stable processes)  is discussed as well, making the unconstrained metric projections with respect to both metrics equivalent.   Proofs of the existence and non-uniqueness of the predictor are given in Sections \ref{sec4} and \ref{sec5}, respectively. The optimization problem \eqref{eq1} is solved numerically e.g. by a stochastic gradient descent, whose convergence is investigated in Section \ref{sec6}. The methodology is applied to simulated Brown-Resnick, Smith and extremal Gaussian processes  and fields  (introduced in Section \ref{subsec24}), and also tested on real rainfall data in Section \ref{sec7}. A discussion of the results resumes the paper. \textsc{R} software of the methods from Section \ref{sec3} is available at \cite{sukhanov2026code}.

\section{Preliminaries}\label{sec2}
 
In what follows, we assume any random variables and random fields to be defined on a probability space $\probspace$. We recall the two norms $\|\mathbf{x}\|_p:= (x_1^p+\ldots+x_n^p)^{1/p},$ $p\ge 1$, and  $\|\mathbf{x}\|_{\infty}:= \max_{j=1,\ldots,n} \lvert x_j\rvert$, $\mathbf{x}=(x_1,\ldots, x_n)\in\bR^n$.  In general, we write $\mathbf{x}^{\alpha}:=(x_1^{\alpha},\ldots, x_n^{\alpha})$ for any $\mathbf{x}=(x_1,\ldots, x_n)\in \bR^{n}_+$ and $\alpha \in \bR$. Also, we make use of the notation $a\vee b := \max\{a,b\}$, $a\wedge b :=\min\{a,b\}$, $a,b\in\bR$. For any integrable function $f:E\to\bR$, $E\subseteq \bR$, we denote $\| f \|_{L^1(E)}:=\int_E \mid \! \! f(x) \! \! \mid dx $ to be the $L^1$--norm of $f$.  We use the standard notation ${\cal U}[0,1]$ for the uniform probability law
on the interval $[0,1]$ and $C^k(E)$ for the class of $k$ times continuously differentiable functions on a space $E$.

\subsection{Max-stable distributions and random fields}\label{subsec2}

Let $\{X_t:t\in T\}$ be a random function on an arbitrary index set $T$. If for any $m\in\bN$ there exist constants $\alpha_m>0$, $\beta_m\in\bR$, such that the
distribution of 	$\{X_t: t\in T\}$ coincides with the distribution of pointwise maximum $\biggl\{ {\alpha_m}\left(\bigvee_{j = 1}^m X_t^{(j)}- \beta_{m}\right): t\in T\biggr\},$ 
where $\{X_t^{(j)}, t\in T\}$ are i.i.d. copies of $\{X_t: t\in T\}$, then $\{X_t: t\in T\}$ is called a \textit{max-stable random} 	 \textit{variable}, if $\lvert T\rvert =1$;   \textit{vector}, if $\lvert T \rvert = n$, $n\in\bN$;  \textit{process}, if $T=\bR$;  \textit{field}, if $T =\bR^d$, $d\in\bN$, $d\geq 2$. Here $\lvert T\rvert$ denotes the cardinality of a set $T$.

Recall that a random process or field $\rf$ is said to be \textit{stationary} if all its finite dimensional distributions are translation invariant. Following the theorem of Fisher-Tippet-Gnedenko (see e.g. \cite{dehaan2006}), any max-stable random variable (and hence any marginal of a max-stable random field) has one of the three extreme value distributions: Weibull, Gumbel or Fr\'echet.  A random variable $X_0\in\bR_+$ has a univariate $\alpha$-Fr\'echet distribution with minimum 0 and scale  $\sigma >0$ if 
\begin{equation}\label{DefG}
	\bP(X_0\leq x) = H_\alpha(x;\sigma):=\exp\left(-{x}^{-\alpha}\sigma^\alpha\right)\One\{x>0\}, \quad x\in \bR.
\end{equation}
The $\alpha$-Fr\'echet law is the only heavy-tailed one among extreme value distributions and has finite moments of order $\beta<\alpha.$ Using the notation $H_\alpha=H_\alpha(\,\cdot\,;1)$ and  $H=H_1(\,\cdot\,;1)$, it is straightforward that  $H_\alpha(x)=H(x^\alpha),$ $x>0$ and $H_\alpha(\,\cdot\,;\sigma^{1/\alpha})=\left( H_\alpha (\cdot) \right)^\sigma.$ 

Further, an $n$--dimensional max--stable random vector $\mathbf{Y}$  is said to have a {\it  simple $n-$variate max--stable distribution} if all its marginal distribution functions are equal to $H(x)=\exp(-x^{-1})$, $x>0.$ Any $n$--variate max-stable random vector $\mathbf{X}$ 
    with $H_\alpha(\cdot,\sigma)$--distributed marginals can be represented as $ (\sigma Y^{1/\alpha}_1,\ldots, \sigma Y^{1/\alpha}_n)$ with distribution function $G_\alpha(\mathbf {x}):=G(\sigma^{-\alpha} \mathbf {x}^\alpha),$ $\mathbf{x}\in \bR^n_+,$ where $\mathbf{Y}=(Y_1,\ldots,Y_n)$ is a simple $n-$variate max-stable random vector with distribution function $G.$

The multivariate distribution function $G$ of $\mathbf{Y}$ has the following  properties:
\begin{enumerate}
    \item  $G$ has a  uniquely determined exponent measure $\nu$ on $\bR^n_+,$ that is $G(\mathbf{x})=\exp(-\nu([\mathbf{0},\mathbf{x}]^c))$, $\mathbf{x}\in \bR^n$, where $[\mathbf{0},\mathbf{x}]$ is a parallelepiped with left bottom vertex $\mathbf{0}$ and right upper vertex $\mathbf{x}$. 
    \item Max-stability and continuity of $G$ imply that $G(\gamma\mathbf{x})=G^{1/\gamma}(\mathbf{x})$  and $\nu([\mathbf{0},\mathbf{x}]^c])=\gamma\nu([\mathbf{0},\gamma\mathbf{x}]^c])$, $\textbf{x}\in \bR^n_+,$ $\gamma>0$.
    \item The exponent measure $\nu$ factorizes in radial and angular components leading to the {\it De Haan--Resnick representation}
        \begin{equation}
        \label{HR:Repr}
        \nu([\mathbf{0},\mathbf{x}]^c)=\int_{\mathbb{S}_n^+}\bigvee_{j=1}^{n}\frac{q_j}{x_j}\phi(d\mathbf{q}),\quad \mathbf{x}=(x_1,\ldots, x_n)\in \bR_+^n,
    \end{equation}
    where  the angular measure $\phi$ on $\mathbb{S}_n^+:=\{\mathbf{x}\in \bR_+^n:\|\mathbf{x}\|=1\}$ is finite and satisfies $\int_{\mathbb{S}_n}q_j \phi(d\mathbf{q})=1.$ Here $\|\cdot\|$ is an arbitrary norm on $\bR^n_+$.

    \item Alternatively, $\nu$ possesses the spectral representation 
    \begin{equation}
        \label{Sp:Repr}
        \nu([\mathbf{0},\mathbf{x}]^c)=\int_0^1 \bigvee_{j=1}^{n}\left(\frac{p_j(u)}{x_j}\right) du,\quad \mathbf{x}=(x_1,\ldots, x_n)\in \bR_+^n,
    \end{equation} where $p_j$ are probability densities on the interval $[0,1]$, i.e., non-negative functions satisfying $\int_0^1p_j(du)=1,$ $j=1,\ldots,n.$
\end{enumerate}


\subsection{Tail dependence functions, max-linear combinations and D-norms}
From now on, let $\mathbf{X}$ be an $n$-dimensional max-stable random vector with $H_\alpha$-distributed marginals and multivariate distribution function $G_\alpha$.
The exponent $l_{\mathbf{X}}(\mathbf{y}):=-\log G_\alpha(\mathbf{y}^{-1/\alpha}),$ $\mathbf{y}=(y_1,\ldots, y_n)\in \bR_+^n$ is called the { \it tail dependence function} of a random vector $\mathbf{X}$, see \cite{joe1996, joe1990, resnick1987}.  
Here the notation $\mathbf{y}^{-1/\alpha}$ refers to a vector with  coordinates  $(1/y_1^{1/\alpha},\ldots, 1/y_n^{1/\alpha})$. 
The function $l_{\mathbf{X}}$ is convex, homogeneous of order 1 and satisfies
		\begin{equation}\label{eq:Ineql}
        \|\mathbf{y}\|_{\infty} \leq l_{\mathbf{X}}(\mathbf{y})\leq \|\mathbf{y}\|_{1},\quad {\bf y} \in \bR^n_+,\end{equation}
	where the upper bound represents stochastic independence and the lower bound represents perfect positive dependence of the coordinates of $\mathbf{X}$, see \cite{gudendorf2010}. By \eqref{Sp:Repr}, we have \begin{equation}\label{eq:l_w_int}l_{\mathbf{X}}(\mathbf{x}) =\left\| \bigvee_{j=1}^{n}{x_j}{p_j}  \right\|_{L^1[0,1]}, \quad \mathbf{x}=(x_1,\ldots,x_n)\in\bR^n_+.\end{equation}
Moreover, it holds   
$
    l_\mathbf{X}(\mathbf{x})=l_\mathbf{Y}(\mathbf{x}^\alpha),\quad  \mathbf{x}\in\bR_+^n,
$
where $l_\mathbf{Y}$ is the tail dependence function of a simple max-stable random vector $\mathbf{Y}$. 
      

Let $M(\mathbf{w},\mathbf{X}):=\bigvee_{j=1}^{n}{(w_j X_j)}$ be the max-linear combination of  random vector $\mathbf{X}=(X_1,\ldots,X_n)$ with weight vector $\mathbf{w}=(w_1,\ldots, w_n)\in \bR_+^n$. 
We denote by $S_{\mathbf{X}}(\mathbf{w})$ the scale parameter of  $M(\mathbf{w},\mathbf{X})$. Then  
\begin{align}
    &\bP(M(\mathbf{w},\mathbf{X})\leq z)=\bP(X_1\leq z/w_1,\ldots,X_n\leq z/w_n) \label{eqScaleM} \\
    &=G_\alpha( z^\alpha w_1^{-\alpha},\ldots,z^\alpha w^{-\alpha}_n)
    =\exp\left(-z^{-\alpha} l_{\mathbf{X}}(\mathbf{w}^{\alpha})\right)= H_\alpha(z,S(\mathbf{w})),\notag
\end{align}
which means
$
 S_\mathbf{X}(\mathbf{w}) 
=l_{\mathbf{X}}^{1/\alpha}(\mathbf{w}^\alpha).
$

By \cite{Haan78}, the stochastic process $\{X_t, t\in T\}$ with $\alpha$-Fr\'echet marginals is max-stable iff all positive max-linear combinations $ 
    M(\mathbf{w},(X_{t_1},\ldots,X_{t_n}))=\bigvee_{j=1}^{n}w_j X_{t_j}
$ for all $w_j>0,$ $t_j\in T,$ $1\leq j\leq n$ are $\alpha$-Fr\'echet distributed random variables.

 Further properties of $l_{\mathbf{X}}$ involve the use of $D-$norms 
 and extremal coefficients, we refer \cite{falk2019} for related topics. 
 \begin{definition}[D-norm]
	Let $\mathbf{Z}=(Z_1,\ldots,Z_n)$ be a random vector with components  satisfying $Z_j\geq 0$ a.s., $\bE Z_j = 1$, $j = 1,\ldots,n$. Then $\|\mathbf{x}\|_{D(\mathbf{Z})}:= \bE\left(\bigvee_{j=1}^n \lvert x_j \rvert Z_j \right),$  $\mathbf{x}=(x_1,\ldots,x_n)\in\bR^n,$ defines the so-called \textit{D-norm}, and $\mathbf{Z}$ is called a {\it generator} of this $D-$norm.
\end{definition}
Each $D-$norm is monotone, i.e., for $\mathbf{0} \leq \mathbf{x} \leq \mathbf{y}$ componentwise, we have $\|\mathbf{x}\|_{D(\mathbf{Z})}\leq \|\mathbf{y}\|_{D(\mathbf{Z})}.$ Each $\|\cdot\|_p,$ $1\leq p\leq \infty$ is a $D-$norm; sup-norm $\|\cdot\|_\infty$ is the smallest and $\|\cdot\|_1$  is the largest $D-$norm.  

$G_\alpha$ is the distribution function of a $n$-variate max-stable random vector $\mathbf{X}$ with $H_\alpha$-distributed margins iff there exists a generator $\mathbf{Z}$ of a $D$-norm on $\bR_+^n$ such that $G_\alpha(\mathbf{x})=\exp(-\|\mathbf{x}^{-\alpha}\|_{D(\mathbf{Z})}),$ $\mathbf{x}\in \bR_+^n$, cf. \cite[Theorem 2.3.4]{falk2019}.  From the above, one concludes that $l_{\mathbf{X}}(\mathbf{x})=\|\mathbf{x}\|_{D(\mathbf{Z})}$, $\mathbf{x}\in \bR_+^n$. For $\alpha>1$, one may use the generator $\mathbf{Z}=\left( \Gamma(1-1/\alpha) \right)^{-1}\mathbf{X}$ 
to get $\|\mathbf{x}\|_{D(\mathbf{Z})}=l_\mathbf{X}^{1/\alpha}(\mathbf{x}^\alpha)$,  $\mathbf{x}\in \bR_+^n$.


For any non--empty subset $A$ of $\{1,\ldots,n\}$ introduce the {\it extremal coefficient} $\theta^A_{\mathbf{X}}$ of a max--stable random vector $\mathbf{X}=(X_1,\ldots,X_n)$  by $\theta^A_{\mathbf{X}}=\int_{\|q\|_1=1}\bigvee_{j\in A}q_j\phi(d\mathbf{q}),$ where $\phi$ is the angular measure from representation \eqref{HR:Repr} of the distribution function of $\mathbf{X}.$ These coefficients can be written via the corresponding tail dependence function $l_{\mathbf{X}}$ as $\theta^A_{\mathbf{X}}=l_{\mathbf{X}}\left(\sum_{j\in A}\mathbf{e}_j\right)\in [1,2],$ where $\mathbf{e}_j$ is the $j$-th orthonormal basis vector in $\mathbb{R}^n$, $1 \leq j \leq n$. The value $\theta^A_{\mathbf{X}}=1$ corresponds to complete dependence of coordinates of $\mathbf{X}$ with indices in $A$,  whereas $\theta^A_{\mathbf{X}}=2$ means their stochastic independence.

The  copula $C_\mathbf{X}$ of $\mathbf{X}$  
 is defined by the relation
$$C_\mathbf{X}(\mathbf{u}):= G_\alpha(-(\log u_1 )^{-1/\alpha},\ldots,-(\log u_n )^{-1/\alpha}),\quad   \mathbf{u}=(u_1,\ldots,u_n)\in[0,1]^n.$$    Particularly, the copula's diagonal equals \begin{equation*}\label{eqCopulDiag}
    C_\mathbf{X}(u\mathbf{1})=
u^{l_\mathbf{X}(\mathbf{1})}= u^{\theta^{\{1,\ldots,n\}}_{\mathbf{X}}}, \quad  u\in [0,1],\end{equation*} where $\mathbf{1}=(1,1,\ldots, 1)$,   cf. \cite{gudendorf2010}.


\section{Probability metrics}\label{subsec23}

In what follows, we introduce several distances (excursion, Davis--Resnick, Wasserstein) on the spaces of random variables or their distribution functions. Those metrics, tailored to different aspects of probabilistic modeling such as coping with heavy tails or max--stability, will be used to evaluate one random variable's ability to predict or approximate the (distribution of) another random variable. 

\subsection{Excursion metric}

As stated in the introduction, we make a forecast by minimizing the excursion metric from \cite{spodarev2022} between an unobservable random variable and a given predictor. 
\begin{definition}\label{excursion_metric_definition}
	Let $\mu$ be a probability measure on $\bR$ and $Y_1, Y_2$ be two real-valued random variables. The excursion (pseudo-) metric $\mathcal{E}_\mu$ is then defined by $$	\mathcal{E}_\mu (Y_1, Y_2) := \int_\bR \left(\bP(Y_1 \vee Y_2 > u) - \bP(Y_1 \wedge Y_2 > u)\right)\mu(du).$$
\end{definition}
If $\mu$ has a continuous cumulative distribution function $F$, then the above definition rewrites as
		\begin{eqnarray}
		    \label{eq:ExcMetr}
        \mathcal{E}_F (Y_1, Y_2) &= \bE\left(F(Y_1 \vee Y_2) -F(Y_1 \wedge Y_2)\right)\\
        &= 2\bE F(Y_1 \vee Y_2) - \bE F(Y_1) -\bE F(Y_2)  .\nonumber
		\end{eqnarray}
If $F$ is additionally strictly increasing, then $\mathcal{E}_F$ is indeed a metric on the space of absolutely continuous random variables with values in the support of the distribution $F.$ In this case, $\mathcal{E}_F$ is proportional to the $F$--{\it madogram} in geostatistics, cf. e.g. \cite[p. 379]{Cooley2006}. 

It is shown in \cite{spodarev2022} that $0\le \mathcal{E}_F (Y_1, Y_2) \le 1$ with $Y_1=Y_2$ a.s. in case $ \mathcal{E}_F (Y_1, Y_2)=0$. However, if random variables $Y_1$ and $Y_2$ share the same distribution function $F$ (the so-called Gini metric), then the upper bound of  $ \mathcal{E}_F (Y_1, Y_2)$ reduces to $1/2$ which is attained iff $Y_1$ and $Y_2$ are a.s. affine-linear dependent. If $Y_1,$ $Y_2$ are stochastically independent, then $ \mathcal{E}_F (Y_1, Y_2)=1/3$.

We use $\mathcal{E}_F$ with $F=H_\alpha$ to construct a pseudometric $\rho_\mathbf{X}:\bR^n_+\times \bR^n_+\to \bR_+$ between two max-linear combinations of a max-stable random vector $\mathbf{X}$ with $H_\alpha$-distributed margins by 
\begin{align}
\label{rho:def}
    \rho_\mathbf{X}(\mathbf{w}^{(1)},\mathbf{w}^{(2)}):=\mathcal{E}_{H_\alpha} (M(\mathbf{w}^{(1)},\mathbf{X}), M(\mathbf{w}^{(2)},\mathbf{X})) ,\quad \mathbf{w}^{(1)},\mathbf{w}^{(2)}\in\bR_+^n.
\end{align}
Further, we set $\mathbf{w}^{(1)}\vee \mathbf{w}^{(2)}= (w^{(1)}_k\vee w^{(2)}_k, \; k=1,\ldots, n)$ for $\mathbf{w}^{(j)}=(w^{(j)}_1,\ldots,w^{(j)}_n)\in \bR_+^n$, $j=1,2.$
We start with the case $n=2$ and $\alpha=1$.
\begin{proposition}\label{EM:d2}
    Let $(X_1,X_2)$ be a simple max-stable random vector. Then for $\sigma_1,\sigma_2>0$ we have
    \begin{equation}\label{EM:d2:eq}
    \mathcal{E}_H (\sigma_1 X_1, \sigma_2 X_2)=\frac{1}{1+\sigma_1}+\frac{1}{1+\sigma_2}-\frac{2}{1+l_{(X_1,X_2)}((\sigma_1,\sigma_2))}.
    \end{equation}
\end{proposition}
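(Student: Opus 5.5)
Throughout, $X_1,X_2$ are simple max-stable, i.e. $H$-distributed with $H(x)=\exp(-1/x)$, $x>0$. The plan is to evaluate each of the three expectations in the second line of \eqref{eq:ExcMetr} with $F=H$ and $Y_1=\sigma_1X_1$, $Y_2=\sigma_2X_2$:
\[
\mathcal{E}_H(\sigma_1X_1,\sigma_2X_2)=2\,\bE H(\sigma_1X_1\vee\sigma_2X_2)-\bE H(\sigma_1X_1)-\bE H(\sigma_2X_2).
\]
Since $H$ is continuous and strictly increasing, this representation applies. The common ingredient is the identity
\[
\bE H(Z)=\frac{1}{1+s}\qquad\text{for } Z\sim H_1(\,\cdot\,;s),\ s>0,
\]
which we prove first.

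\emph{Proof of the identity.} The density of $Z$ is $s z^{-2}e^{-s/z}$, so
\[
\bE H(Z)=\int_0^\infty e^{-1/z}\, s z^{-2}e^{-s/z}\,dz .
\]
The substitution $v=1/z$ turns this into $s\int_0^\infty e^{-(1+s)v}\,dv=s/(1+s)$. This is not yet the claimed value, so the orientation has to be rechecked against \eqref{EM:d2:eq}.

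The issue is how to read the scale. For $\sigma_1=\sigma_2=1$ and independent coordinates, where $l=2$, the stated formula gives $1/2+1/2-2/3=1/3$. This matches the known independent value, so the target formula is consistent. With the pure $\bE H$ term of the form $s/(1+s)$ we would instead get $2\cdot\tfrac{s}{1+s}-\tfrac{\sigma_1}{1+\sigma_1}-\tfrac{\sigma_2}{1+\sigma_2}$. Using $\tfrac{s}{1+s}=1-\tfrac{1}{1+s}$, this equals
\[
\frac{1}{1+\sigma_1}+\frac{1}{1+\sigma_2}-\frac{2}{1+s},
\]
which is exactly the claimed form. So the identity we actually need is $\bE H(Z)=s/(1+s)$, i.e. the computation above is correct and only the sign bookkeeping changes.

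\emph{Scale parameters.} Next we identify the scale of each variable:

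1. The scale of $\sigma_jX_j$ is $\sigma_j$, because $\bP(\sigma_jX_j\le z)=\exp(-\sigma_j/z)$.
2. The scale of $\sigma_1X_1\vee\sigma_2X_2$ is $s=l_{(X_1,X_2)}((\sigma_1,\sigma_2))$. This follows from \eqref{eqScaleM} with $\alpha=1$ and $\mathbf{w}=(\sigma_1,\sigma_2)$, since $S_{\mathbf X}(\mathbf w)=l_{\mathbf X}(\mathbf w)$ in that case.

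Inserting these scales into $\bE H(Z)=s/(1+s)$ and applying $\tfrac{s}{1+s}=1-\tfrac{1}{1+s}$ to each term, the constants cancel ($2-1-1=0$). What remains is \eqref{EM:d2:eq}.

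The main point needing care is justifying \eqref{eq:ExcMetr} for variables with a continuous law and keeping the orientation $s/(1+s)$ versus $1/(1+s)$ straight. The integral itself is elementary.
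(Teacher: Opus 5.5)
Your argument is correct and is essentially the paper's proof: you expand $\mathcal{E}_H$ via \eqref{eq:ExcMetr}, identify the scales $\sigma_1$, $\sigma_2$ and $l_{(X_1,X_2)}((\sigma_1,\sigma_2))$, and use $\bE H(Z)=s/(1+s)$; the paper gets this last identity from $H(Z)\stackrel{d}{=}U_0^{1/s}$ with $U_0\sim{\cal U}[0,1]$ rather than by integrating the density. In the write-up, state $s/(1+s)$ from the outset and drop the initially announced (and false) identity $\bE H(Z)=1/(1+s)$, together with the detour about orientation that follows it.
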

\begin{proof}
    Random variables $H(X_1)$ and $H(X_2)$ are uniformly distributed, i.e. $H(X_1)\stackrel{d}{=}H(X_2)\stackrel{d}{=}U_0\sim {\cal U}[0,1].$ Then 
    $H(\sigma_jX_j)=H^{\sigma^{-1}_j}(X_j) \stackrel{d}{=}U_0^{\sigma^{-1}_j},$ $j=1,2.$ Moreover, $\bP(\sigma_1X_1\leq u, \sigma_2X_2\leq u)=\bP(X_1\leq u \sigma^{-1}_1, X_2\leq u \sigma^{-1}_2)=G(u(\sigma^{-1}_1,\sigma^{-1}_2))=H(u l^{-1}_{(X_1,X_2)}(\sigma_1,\sigma_2)).$ Thus, $$H(\sigma_1X_1 \vee \sigma_2X_2)\stackrel{d}{=} U_0^{l^{-1}_{(X_1,X_2)}(\sigma_1,\sigma_2)} $$ By definition of excursion metric, $\mathcal{E}_H (\sigma_1 X_1, \sigma_2 X_2)= 2\bE H(\sigma_1X_1 \vee \sigma_2 X_2) - \bE H(\sigma_1 X_1) -\bE H(\sigma_2 X_2),$ which leads directly to \eqref{EM:d2:eq}.
\end{proof}
Relation \eqref{EM:d2:eq} can be generalized to $n$ variables and arbitrary $\alpha>0$ as follows:
\begin{theorem}\label{Em:MC:l}
Let $\mathbf{X}=(X_1,\ldots,X_n)$ be a  $n$-variate max-stable random vector    with $H_\alpha$--distributed marginals  and tail dependence function $l_\mathbf{X}.$ For any $\mathbf{w}^{(1)},\mathbf{w}^{(2)}\in\bR_+^n$ it holds
\begin{eqnarray}\label{Em:MC:l:eq}
    \rho_\mathbf{X}(\mathbf{w}^{(1)},\mathbf{w}^{(2)})&=  \frac{1}{1+l_\mathbf{X}\left((\mathbf{w}^{(1)})^\alpha\right)}+\frac{1}{1+l_\mathbf{X}\left((\mathbf{w}^{(2)})^\alpha\right)} \\
    &- \frac{2}{1+l_\mathbf{X}\left((\mathbf{w}^{(1)}\vee \mathbf{w}^{(2)})^\alpha\right)} \nonumber  .
\end{eqnarray}
\end{theorem}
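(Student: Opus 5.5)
The argument follows the proof of Proposition \ref{EM:d2}, using the second form of \eqref{eq:ExcMetr}:
\[
\rho_\mathbf{X}(\mathbf{w}^{(1)},\mathbf{w}^{(2)}) = 2\bE H_\alpha\bigl(M_1\vee M_2\bigr) - \bE H_\alpha(M_1) - \bE H_\alpha(M_2),
\]
where $M_i := M(\mathbf{w}^{(i)},\mathbf{X})$, $i=1,2$. The key observation is that the maximum of two max-linear combinations is again max-linear:
\[
M_1\vee M_2=\bigvee_{j=1}^n (w^{(1)}_j\vee w^{(2)}_j)X_j = M(\mathbf{w}^{(1)}\vee \mathbf{w}^{(2)},\mathbf{X}).
\]
This holds because $X_j\ge 0$ a.s., so $\max_j(a_jX_j)\vee\max_j(b_jX_j)=\max_j (a_j\vee b_j)X_j$. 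It therefore suffices to compute $\bE H_\alpha(M(\mathbf{w},\mathbf{X}))$ for a single weight vector $\mathbf{w}\in\bR^n_+$.

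By \eqref{eqScaleM}, $M(\mathbf{w},\mathbf{X})$ has distribution function $\exp(-z^{-\alpha}L)$ with $L:=l_\mathbf{X}(\mathbf{w}^\alpha)$. Note $L>0$, since $\mathbf{w}\neq\mathbf{0}$ and \eqref{eq:Ineql} gives $L\ge\|\mathbf{w}^\alpha\|_\infty$. Hence
\[
H_\alpha(M)=\exp(-M^{-\alpha})=\bigl(\exp(-M^{-\alpha}L)\bigr)^{1/L}.
\]
The inner expression is the distribution function of $M$ evaluated at $M$ itself, which is $\mathcal U[0,1]$ by continuity. Thus $H_\alpha(M)\stackrel{d}{=}U_0^{1/L}$, and
\[
\bE H_\alpha(M)=\int_0^1 u^{1/L}\,du=\frac{L}{L+1}=1-\frac{1}{1+L}.
\]

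Substituting this with $\mathbf{w}=\mathbf{w}^{(1)}$, $\mathbf{w}^{(2)}$ and $\mathbf{w}^{(1)}\vee\mathbf{w}^{(2)}$ into the expression for $\rho_\mathbf{X}$ makes the constant terms cancel ($2-1-1=0$). What remains is exactly the right-hand side of \eqref{Em:MC:l:eq}.

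The only points needing care are (i) justifying the use of \eqref{eq:ExcMetr}, which requires $H_\alpha$ to be continuous, as it is, and all expectations to be finite, which holds since they are bounded by 1; and (ii) the identity $M_1\vee M_2=M(\mathbf{w}^{(1)}\vee\mathbf{w}^{(2)},\mathbf{X})$. Neither is a real obstacle. As a consistency check, for $n=2$ and $\alpha=1$ with $\mathbf{w}^{(1)}=(\sigma_1,0)$ and $\mathbf{w}^{(2)}=(0,\sigma_2)$ we get $l_\mathbf{X}((\sigma_1,0))=\sigma_1$ by homogeneity and the unit margins, so the formula recovers Proposition \ref{EM:d2}.
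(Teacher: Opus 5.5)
Your proposal is correct and follows the paper's proof essentially step for step. Both write $\rho_\mathbf{X}$ via $2\bE H_\alpha(M_1\vee M_2)-\bE H_\alpha(M_1)-\bE H_\alpha(M_2)$, use $H_\alpha(M(\mathbf{w},\mathbf{X}))\stackrel{d}{=}U_0^{1/l_\mathbf{X}(\mathbf{w}^\alpha)}$ to get $\bE H_\alpha(M(\mathbf{w},\mathbf{X}))=1-\frac{1}{1+l_\mathbf{X}(\mathbf{w}^\alpha)}$, and use the identity $M_1\vee M_2=M(\mathbf{w}^{(1)}\vee\mathbf{w}^{(2)},\mathbf{X})$. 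Your added checks, that $L>0$ and that $X_j\ge 0$ justifies the max identity, are small rigor details the paper leaves implicit.
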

\begin{proof} 
The random variable $M(\mathbf{w},\mathbf{X})$  has scale parameter $l^{1/\alpha}_\mathbf{X}(\mathbf{w}^\alpha)$, therefore $l^{-1/\alpha}_\mathbf{X}(\mathbf{w}^\alpha) M(\mathbf{w},\mathbf{X})$ is a $H_\alpha$-distributed random variable. 
Thus, with $U_0\sim {\cal U}[0,1]$ we have 
$$H_\alpha( M(\mathbf{w},\mathbf{X}))\stackrel{d}{=}U_0^{l^{-1}_\mathbf{X}(\mathbf{w}^\alpha)},\quad\bE H_\alpha( M(\mathbf{w},\mathbf{X})) = \frac{1}{l^{-1}_\mathbf{X}(\mathbf{w}^\alpha)+1}=1-\frac{1}{1+l_\mathbf{X}(\mathbf{w}^\alpha)}.$$
Obviously, $M(\mathbf{w}^{(1)},\mathbf{X}) \vee M(\mathbf{w}^{(2)},\mathbf{X})=\bigvee_{j=1}^n (w^{(1)}_j\vee w^{(2)}_j)X_j=M(\mathbf{w}^{(1)}\vee \mathbf{w}^{(2)},\mathbf{X}).$ Finally, we get formula \eqref{Em:MC:l:eq} from the relation $\mathcal{E}_{H_\alpha}(M(\mathbf{w}^{(1)},\mathbf{X}), M(\mathbf{w}^{(2)},\mathbf{X}))=2\bE H_\alpha\left(M(\mathbf{w}^{(1)},\mathbf{X}) \vee M(\mathbf{w}^{(2)},\mathbf{X})\right) - \bE H_\alpha(M(\mathbf{w}^{(1)},\mathbf{X})) -\bE H_\alpha(M(\mathbf{w}^{(2)},\mathbf{X})).$
\end{proof}


\begin{corollary}\label{cor:exMetr} Let  the random vector $\mathbf{X}$ be as in Theorem \ref{Em:MC:l}.
    For every vector $\mathbf{w}=(w_1,\ldots,w_n)\in \bR_+^n$ and $\gamma_k\in[0,w_k]$,  $k=1,\ldots,n$,   
    it holds
    \begin{equation}\label{Em:MC2:eq}
 \mathcal{E}_{H_\alpha} (M(\mathbf{w},\mathbf{X}), \gamma_k X_k)= \frac{1}{\gamma_k^\alpha +1}-\frac{1}{l_\mathbf{X}(\mathbf{w}^\alpha)+1}.
\end{equation}
 Let the extended random vector $\mathbf{X}_e=(X_0,\mathbf{X})$ be max--stable   with $H_\alpha$--distributed marginals and tail dependence function $l_{\mathbf{X}_e}$. For $\gamma_0\geq 0$, we have
    \begin{align}\label{Em:MC3:eq}
    \mathcal{E}_{H_\alpha} (M(\mathbf{w},\mathbf{X}), \gamma_0 X_0)= \frac{1}{\gamma_0^\alpha+1} +\frac{1}{l_\mathbf{X}(\mathbf{w}^\alpha)+1}- \frac{2}{l_{\mathbf{X}_e}((\gamma_0^\alpha,\mathbf{w}^\alpha))+1}.
    \end{align}
\end{corollary}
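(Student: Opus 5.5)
Both identities follow from Theorem \ref{Em:MC:l} once each random variable is written as a max-linear combination of one common max-stable vector.

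For \eqref{Em:MC2:eq}, I write $\gamma_k X_k = M(\gamma_k\mathbf{e}_k,\mathbf{X})$, so the left-hand side is $\rho_\mathbf{X}(\mathbf{w},\gamma_k\mathbf{e}_k)$ in the notation \eqref{rho:def}. Theorem \ref{Em:MC:l} then gives three terms.

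\begin{enumerate}
\item The term with $l_\mathbf{X}(\mathbf{w}^\alpha)$ stays as it is.
\item The term with $l_\mathbf{X}((\gamma_k\mathbf{e}_k)^\alpha)$ simplifies by $1$-homogeneity of $l_\mathbf{X}$: $l_\mathbf{X}(\gamma_k^\alpha\mathbf{e}_k)=\gamma_k^\alpha\, l_\mathbf{X}(\mathbf{e}_k)=\gamma_k^\alpha$. Here $l_\mathbf{X}(\mathbf{e}_k)=1$ because $X_k$ has $H_\alpha$ margin; equivalently, squeeze with \eqref{eq:Ineql}.
\item The mixed term uses $\mathbf{w}\vee\gamma_k\mathbf{e}_k=\mathbf{w}$, which holds since $\gamma_k\le w_k$. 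So it equals $-2/(1+l_\mathbf{X}(\mathbf{w}^\alpha))$.
\end{enumerate}

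Adding the three terms, $+1/(1+l_\mathbf{X}(\mathbf{w}^\alpha))$ and $-2/(1+l_\mathbf{X}(\mathbf{w}^\alpha))$ combine into $-1/(l_\mathbf{X}(\mathbf{w}^\alpha)+1)$, which is \eqref{Em:MC2:eq}. The degenerate case $\gamma_k=0$ needs a separate word, because $\gamma_k\mathbf{e}_k\notin\bR^n_+$ (the origin is excluded). There $\gamma_kX_k=0$, and \eqref{eq:ExcMetr} gives directly $\bE H_\alpha(M)-H_\alpha(0)=1-1/(1+l_\mathbf{X}(\mathbf{w}^\alpha))$, which again matches the formula.

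For \eqref{Em:MC3:eq}, I apply Theorem \ref{Em:MC:l} to the $(n+1)$-variate vector $\mathbf{X}_e$ with weights $(\gamma_0,\mathbf{0})$ and $(0,\mathbf{w})$, so that $\gamma_0X_0=M((\gamma_0,\mathbf{0}),\mathbf{X}_e)$ and $M(\mathbf{w},\mathbf{X})=M((0,\mathbf{w}),\mathbf{X}_e)$.

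\begin{enumerate}
\item Their componentwise maximum is $(\gamma_0,\mathbf{w})$, which produces the term with $l_{\mathbf{X}_e}((\gamma_0^\alpha,\mathbf{w}^\alpha))$.
\item The single term $l_{\mathbf{X}_e}((\gamma_0^\alpha,\mathbf{0}))$ equals $\gamma_0^\alpha$, as in part one.
\item One must identify $l_{\mathbf{X}_e}((0,\mathbf{w}^\alpha))=l_\mathbf{X}(\mathbf{w}^\alpha)$.
\end{enumerate}

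Step 3 is the only genuinely new point, and I would justify it via \eqref{eqScaleM}. The scale of $M((0,\mathbf{w}),\mathbf{X}_e)=M(\mathbf{w},\mathbf{X})$ is computed once with $l_{\mathbf{X}_e}$ and once with $l_\mathbf{X}$. Since the two random variables coincide, the scales must agree. Alternatively, $G_{\alpha}$ of $\mathbf{X}_e$ with first argument $+\infty$ is the distribution function of $\mathbf{X}$, and the $y_0=0$ convention in $l$ corresponds to exactly this.

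Again $\gamma_0=0$ is handled directly from \eqref{eq:ExcMetr}, giving $1/(1+l_\mathbf{X}(\mathbf{w}^\alpha))$. This matches the formula because $1+1/(l+1)-2(1-1/(l+1))$ recovers the same value. The main obstacle is therefore just bookkeeping for zero weights and the marginalization identity in step 3; everything else is a direct substitution.
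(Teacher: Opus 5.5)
Your argument is the paper's proof. The paper also writes $\gamma_kX_k=M(\gamma_k\mathbf{e}_k,\mathbf{X})$, uses $\mathbf{w}\vee\gamma_k\mathbf{e}_k=\mathbf{w}$ and $\bE H_\alpha(M(\gamma_k\mathbf{e}_k,\mathbf{X}))=1-(1+\gamma_k^\alpha)^{-1}$, and for \eqref{Em:MC3:eq} applies Theorem~\ref{Em:MC:l} to $\mathbf{X}_e$ with weights $(\gamma_0,\mathbf{0})$ and $(0,\mathbf{w})$. Your remarks on the excluded origin and on $l_{\mathbf{X}_e}((0,\mathbf{y}))=l_\mathbf{X}(\mathbf{y})$ spell out points the paper leaves implicit.

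The only flaw is arithmetic in your $\gamma_0=0$ check. Write $l=l_\mathbf{X}(\mathbf{w}^\alpha)$.
\begin{enumerate}
\item Since $\gamma_0X_0=0$ and $H_\alpha(0)=0$, the left-hand side equals $\bE H_\alpha(M(\mathbf{w},\mathbf{X}))=1-1/(l+1)$, exactly as in your first part. It is not $1/(l+1)$.
\item At $\gamma_0=0$ the last term of \eqref{Em:MC3:eq} is $2/(l_{\mathbf{X}_e}((0,\mathbf{w}^\alpha))+1)=2/(l+1)$, not $2(1-1/(l+1))$. So the right-hand side is $1+1/(l+1)-2/(l+1)$.
\end{enumerate}
As written, your two expressions $1/(l+1)$ and $3/(l+1)-1$ agree only when $l=1$. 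With the corrected values both sides equal $l/(l+1)$, and the edge case goes through.
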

\begin{proof}
Relation   \eqref{Em:MC2:eq} follows from Theorem  \ref{Em:MC:l} if we notice that    $\gamma_k  X_k=  M(\gamma_k\mathbf{e}_k,\mathbf{X})$,  $\mathbf{w}\vee \gamma_k  \mathbf{e}_k = \mathbf{w}$, $k=1,\ldots,n$  for $0\leq \gamma_k\leq w_k$, and  $\bE H_\alpha(M(\gamma_k \mathbf{e}_k,\mathbf{X}))=1-(1+\gamma_k^\alpha)^{-1}.$
To prove relation \eqref{Em:MC3:eq}, we apply Theorem  \ref{Em:MC:l} to the vector  $\mathbf{X}_e$ with $\mathbf{w}^{(1)}=(\gamma_0, 0,\ldots, 0 ) $, $\mathbf{w}^{(2)}=(0, \mathbf{w} ) $.
 \end{proof}

For max-linear combinations,  relation \eqref{eq:l_w_int}
 is instrumental in proving the following result:
\begin{theorem} \label{thm:rho}
Let a max-stable random vector $\mathbf{X}=(X_1,\ldots, X_n)$ with $H_\alpha$--distributed marginals have spectral representation \eqref{Sp:Repr} with spectral functions $p_1,\ldots,p_n>0$ a.e. on $[0,1]$. If functions $p_1,\ldots, p_n$ are max-linearly independent a.e. on $[0,1]$, then the mapping $\rho_\mathbf{X}$ defined in \eqref{rho:def} is a metric on $\bR_+^n\times \bR_+^n.$ 
\end{theorem}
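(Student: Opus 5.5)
The argument rests on the explicit formula of Theorem \ref{Em:MC:l} combined with the integral representation \eqref{eq:l_w_int}. Since $\rho_\mathbf{X}$ is defined through the excursion pseudometric $\mathcal{E}_{H_\alpha}$, the symmetry, nonnegativity and triangle inequality are inherited from $\mathcal{E}_{H_\alpha}$. Indeed,
\[
\mathcal{E}_{H_\alpha}(Y_1,Y_2)=\bE\bigl|H_\alpha(Y_1)-H_\alpha(Y_2)\bigr|,
\]
because $H_\alpha$ is increasing, so it is an $L^1$-distance of the transformed variables. So $\rho_\mathbf{X}(\mathbf{w}^{(1)},\mathbf{w}^{(2)})=\bE|H_\alpha(M(\mathbf{w}^{(1)},\mathbf{X}))-H_\alpha(M(\mathbf{w}^{(2)},\mathbf{X}))|$ is a pseudometric on $\bR^n_+$. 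The only thing to prove is definiteness: $\rho_\mathbf{X}(\mathbf{w}^{(1)},\mathbf{w}^{(2)})=0$ implies $\mathbf{w}^{(1)}=\mathbf{w}^{(2)}$.

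Assume $\rho_\mathbf{X}(\mathbf{w}^{(1)},\mathbf{w}^{(2)})=0$ and abbreviate $a=l_\mathbf{X}((\mathbf{w}^{(1)})^\alpha)$, $b=l_\mathbf{X}((\mathbf{w}^{(2)})^\alpha)$, $c=l_\mathbf{X}((\mathbf{w}^{(1)}\vee\mathbf{w}^{(2)})^\alpha)$. By monotonicity of the $D$-norm $l_\mathbf{X}$ we have $c\ge a\vee b$. Since $x\mapsto 1/(1+x)$ is strictly decreasing,
\[
\frac1{1+a}+\frac1{1+b}-\frac2{1+c}\ \ge\ \Bigl(\frac1{1+a}-\frac1{1+c}\Bigr)+\Bigl(\frac1{1+b}-\frac1{1+c}\Bigr),
\]
where each bracket is $\ge0$. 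Hence vanishing of \eqref{Em:MC:l:eq} forces $a=b=c$.

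Next I would translate $a=c$ via \eqref{eq:l_w_int}. Writing $f_i=\bigvee_j (w^{(i)}_j)^\alpha p_j$ for $i=1,2$, we have $\bigvee_j (w^{(1)}_j\vee w^{(2)}_j)^\alpha p_j=f_1\vee f_2$. Then
\[
\|f_1\vee f_2\|_{L^1[0,1]}=\|f_1\|_{L^1[0,1]}.
\]
Since $f_1\vee f_2-f_1\ge0$, this gives $f_2\le f_1$ a.e. Symmetrically, $b=c$ gives $f_1\le f_2$ a.e., so $f_1=f_2$ a.e. on $[0,1]$. This says $\bigvee_j (w^{(1)}_j)^\alpha p_j=\bigvee_j (w^{(2)}_j)^\alpha p_j$ a.e.

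The main step is to conclude $\mathbf{w}^{(1)}=\mathbf{w}^{(2)}$ from this equality using max-linear independence of $p_1,\dots,p_n$. Max-linear independence, in the sense that no $p_k$ is a.e. dominated by a max-linear combination of the others, is the natural hypothesis. Assume some coordinate differs, say $w^{(1)}_k>w^{(2)}_k\ge0$. On the set where $(w^{(1)}_k)^\alpha p_k$ attains the maximum in $f_1$, the function $f_2=f_1$ must be attained by other indices $j\ne k$. I would try to show that this set has positive measure; otherwise $p_k$ is a.e. dominated by a max-combination of the other $p_j$, contradicting independence. Then I would exploit the strict inequality $w^{(1)}_k>w^{(2)}_k$ together with $p_k>0$ a.e. to express $(w^{(1)}_k)^\alpha p_k$ on that set as a max of the others. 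This is where I expect to need the precise definition of max-linear independence, likely in the form that the representation of a max-combination is unique, i.e. the map $\mathbf{v}\mapsto\bigvee_j v_jp_j$ is injective on $\bR^n_+$. If the paper defines it that way, this step is immediate. Otherwise I would argue coordinatewise by induction on the number of active indices, using positivity of the $p_j$ to guarantee that the zero weights are handled correctly.
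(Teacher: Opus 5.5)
Up to the identity $\bigvee_j (w^{(1)}_j)^\alpha p_j=\bigvee_j (w^{(2)}_j)^\alpha p_j$ a.e.\ you follow the paper's proof. The paper writes $0=2c-a-b=\|f_1-f_2\|_{L^1[0,1]}$ where you use two one-sided comparisons. Your identity $\mathcal{E}_{H_\alpha}(Y_1,Y_2)=\bE|H_\alpha(Y_1)-H_\alpha(Y_2)|$ usefully supplies the pseudometric axioms that the paper takes for granted.

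The paper then closes with a one-line appeal to max-linear independence, which it never defines. This is exactly where you must be precise, and your sketch does not yet close. Producing a set $A$ of positive measure on which $(w^{(1)}_k)^\alpha p_k=\bigvee_{j\ne k}(w^{(2)}_j)^\alpha p_j$ contradicts nothing, because non-domination is a statement about all of $[0,1]$. The missing line is global. Wherever $p_k>0$, i.e.\ a.e., you have $(w^{(2)}_k)^\alpha p_k<(w^{(1)}_k)^\alpha p_k\le f_1=f_2$, so the index $k$ never attains the maximum in $f_2$. Hence $(w^{(1)}_k)^\alpha p_k\le\bigvee_{j\ne k}(w^{(2)}_j)^\alpha p_j$ a.e. Dividing by $(w^{(1)}_k)^\alpha>0$ gives precisely the forbidden domination of $p_k$, so the set $A$ is unnecessary.

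Your fallback, an induction under some other reading of independence, cannot succeed under the usual weak notion, where no $p_k$ \emph{equals} a max-linear combination of the others. Under that reading the statement is false. For $n=2$ let $p_1=\frac32\One_{[0,1/2)}+\frac12\One_{[1/2,1]}$ and $p_2=\frac12\One_{[0,1/2)}+\frac32\One_{[1/2,1]}$. Both are positive and not proportional. Yet for $0\le s_1<s_2<3^{-1/\alpha}$ one has $s_i^\alpha p_1\vee p_2=p_2$, so $\rho_\mathbf{X}((s_1,1),(s_2,1))=0$.

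Conversely, suppose $p_k\le\bigvee_{j\ne k}c_jp_j$ a.e. Then $\mathbf{c}$ (with $c_k:=0$) and $\mathbf{c}+\mathbf{e}_k$ produce the same max-combination. So $\mathbf{c}^{1/\alpha}\neq(\mathbf{c}+\mathbf{e}_k)^{1/\alpha}$ are at $\rho_\mathbf{X}$-distance $0$. Thus, given $p_j>0$ a.e., your non-domination condition is equivalent to injectivity of $\mathbf{v}\mapsto\bigvee_jv_jp_j$ on $\bR^n_+$. It is exactly the hypothesis under which the theorem, and the paper's final sentence, are true, so adopt it outright instead of hedging.
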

\begin{proof} Let for some $\mathbf{u},\mathbf{v}\in\bR_+^n$ $\rho_{\mathbf{X}}(\mathbf{u},\mathbf{v})=0.$ 
    It is evident by construction that $l_\mathbf{X}(\mathbf{u}^\alpha\vee \mathbf{v}^\alpha)\geq l_\mathbf{X}(\mathbf{u}^\alpha)\vee l_\mathbf{X}(\mathbf{v}^\alpha).$ Moreover, the function $r(x)=1/(x^{-1}+1)$, $x\geq 0$ is monotonically increasing.  Therefore, 
    \begin{align*}
        0=\rho_\mathbf{X}(\mathbf{u},\mathbf{v})&=\underbrace{r(l_\mathbf{X}(\mathbf{u}^\alpha\vee \mathbf{v}^\alpha))-r(l_\mathbf{X}( \mathbf{u}^\alpha))}_{\geq 0}+\underbrace{r(l_\mathbf{X}(\mathbf{u}^\alpha\vee \mathbf{v}^\alpha))-r(l_\mathbf{X}( \mathbf{v}^\alpha))}_{\geq 0}
    \end{align*} implies that $l_\mathbf{X}(\mathbf{u}^\alpha\vee \mathbf{v}^\alpha)= l_\mathbf{X}(\mathbf{u}^\alpha)=l_\mathbf{X}(\mathbf{v}^\alpha).$ 
    Hence, we may write
        \begin{align*}
        0&=2l_\mathbf{X}(\mathbf{u}^\alpha\vee \mathbf{v}^\alpha)-l_\mathbf{X}(\mathbf{u}^\alpha)-l_\mathbf{X}( \mathbf{v}^\alpha)\\
        &=\int_0^1 2\left(\bigvee_{j=1}^n u_j^\alpha p_j(u)\vee \bigvee_{j=1}^n v_j^\alpha p_j(u)\right)du\\
        &-\int_0^1\bigvee_{j=1}^n \left(u_j^\alpha p_j(u)\right)du-\int_0^1\bigvee_{j=1}^n \left(v_j^\alpha p_j(u)\right)du\\
          &=\int_0^1\Bigl\vert \bigvee_{j=1}^n u_j^\alpha p_j(u) - \bigvee_{j=1}^n v_j^\alpha p_j(u) \Bigr\vert du,
    \end{align*}
    which means $\bigvee_{j=1}^n u_j^\alpha p_j(u) = \bigvee_{j=1}^n v_j^\alpha p_j(u)$ for almost all $u\in [0,1]$. This holds under the condition of max-linear independence of positive probability densities $p_j$, $j=1,\ldots, n$ iff $\mathbf{u}=\mathbf{v}$.
    \end{proof}


 \subsection{Davis--Resnick distance \texorpdfstring{$d$}{d}}\label{Subsect:DR}

Let us discuss the relationship between the excursion metric $ \mathcal{E}_{H}$ and a specific dependency distance $d$ for max-stable random variables introduced in \cite{DavisResnick}.   For  a simple max-stable random vector $(X_1,X_2)$  and $\sigma_1,\sigma_2 \geq 0$,  define
\begin{equation*}
\label{Md:eq}
    d(\sigma_1 X_1,\sigma_2 X_2):=\|\sigma_1 p_1-\sigma_2 p_2\|_{L^1([0,1])}=2\|\sigma_1 p_1\vee \sigma_2 p_2\|_{L^1([0,1])}-\sigma_1-\sigma_2, 
\end{equation*}
where functions $p_1,p_2$ come from spectral representation \eqref{Sp:Repr}, i.e., $$\bP(X_1\leq x_1,X_2\leq x_2)=\exp\left(-\int_0^1\frac{p_1(u)}{x_1}\vee \frac{p_2(u)}{x_2}du\right),\quad (x_1,x_2)\in \bR^2_+.$$
Random variables $X_1,X_2$ are completely dependent, i.e., $X_1=X_2$ a.s. iff $d(X_1,X_2)=0.$ They are independent iff $d(X_1,X_2)=2,$ see \cite[Section 3]{DavisResnick}.

\begin{proposition}\label{prop:Dist_d}
Let $(X_1,X_2)$ be a simple max-stable random vector. Then it holds 
$$
\mathcal{E}_{H}(X_1,X_2)=\frac{d(X_1,X_2)}{4+d(X_1,X_2)}, \qquad  d(X_1,X_2)=4\frac{\mathcal{E}_{H}(X_1,X_2)}{1-\mathcal{E}_{H}(X_1,X_2)},
$$
where $\mathcal{E}_{H}$ is the excursion metric and $d$ is the Davis--Resnick distance introduced above.
\end{proposition}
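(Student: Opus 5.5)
Specialize Proposition \ref{EM:d2} to $\sigma_1=\sigma_2=1$ and express the tail dependence function via the spectral functions. With $\sigma_1=\sigma_2=1$, formula \eqref{EM:d2:eq} gives
\[
\mathcal{E}_H(X_1,X_2)=\frac{1}{2}+\frac{1}{2}-\frac{2}{1+l_{(X_1,X_2)}((1,1))}=1-\frac{2}{1+l_{(X_1,X_2)}((1,1))}.
\]
So it suffices to express $l_{(X_1,X_2)}((1,1))$ through $d(X_1,X_2)$.

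By \eqref{eq:l_w_int} with $\alpha=1$, we have $l_{(X_1,X_2)}((1,1))=\|p_1\vee p_2\|_{L^1([0,1])}$. The definition of the Davis--Resnick distance with $\sigma_1=\sigma_2=1$ reads $d(X_1,X_2)=2\|p_1\vee p_2\|_{L^1([0,1])}-2$, using $\int_0^1 p_j=1$. Hence
\[
l_{(X_1,X_2)}((1,1))=1+\frac{d(X_1,X_2)}{2}.
\]
For completeness I would recheck the identity $\|p_1-p_2\|_{L^1}=2\|p_1\vee p_2\|_{L^1}-\|p_1\|_{L^1}-\|p_2\|_{L^1}$, which follows pointwise from $|a-b|=2(a\vee b)-a-b$.

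Substituting, with $d=d(X_1,X_2)$,
\[
\mathcal{E}_H(X_1,X_2)=1-\frac{2}{2+d/2}=1-\frac{4}{4+d}=\frac{d}{4+d},
\]
which is the first identity. The second follows by inverting the map $d\mapsto d/(4+d)$ on $[0,2]$: from $\mathcal{E}(4+d)=d$ we get $d=4\mathcal{E}/(1-\mathcal{E})$. This is legitimate since $\mathcal{E}_H\le 1/3<1$ here, because $d\le 2$.

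There is no real obstacle. The only point needing care is that the spectral functions $p_1,p_2$ in the definition of $d$ coincide with those in \eqref{Sp:Repr} defining $l_{(X_1,X_2)}$. This holds because both are read off the same bivariate distribution function with $\alpha=1$.
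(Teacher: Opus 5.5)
Your proof is correct and follows essentially the same route as the paper's: specialize \eqref{EM:d2:eq} to $\sigma_1=\sigma_2=1$, use $d(X_1,X_2)=2\,l_{(X_1,X_2)}((1,1))-2$, substitute, and invert the map $d\mapsto d/(4+d)$. Your added checks (the pointwise identity $|a-b|=2(a\vee b)-a-b$ and $\mathcal{E}_H<1$ for the inversion) are fine but go slightly beyond what the paper writes out.
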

\begin{proof}
Note that $d(\sigma_1 X_1,\sigma_2 X_2)=2 l_{(X_1,X_2)}(\sigma_1,\sigma_2)-\sigma_1-\sigma_2,$ therefore, using relation \eqref{EM:d2:eq} we get 
\begin{align*}
    \mathcal{E}_{H}(X_1,X_2)&=1-\frac{2}{1+l_{(X_1,X_2)}(1,1)}=\frac{d(X_1,X_2)}{4+d(X_1,X_2)},\\
    d(X_1,X_2)&=4\frac{\mathcal{E}_{H}(X_1,X_2)}{1-\mathcal{E}_{H}(X_1,X_2)}.
\end{align*}
\end{proof}
It follows from Proposition \ref{prop:Dist_d} that $\mathcal{E}_{H}(X_1,X_2)=1/3$ iff random variables $X_1$ and $X_2$ are independent. In addition, it holds $ \mathcal{E}_{H}(X_1,X_2)=f(d(X_1,X_2))$, where the function $f(x)=x/(x+4)$ is increasing on $[0,+\infty)$ as well as its inverse $f^{-1}$ is increasing on $[0,1)$. This makes a metric projection with respect to metric $\mathcal{E}_{H}$ or $d$ an equivalent task.

\subsection{Wasserstein distance}

Let $\mathcal{C}_D(\bR)$ be the space of continuous distribution functions on $\bR$. A widely used measure of difference between distributions is  \textit{p-Wasserstein distance}, which is equal to $\omega_p(F_1,F_2)=\|F_1^{-1}-F_2^{-1}\|_{L^p[0,1]}$ for $F_1,F_2\in\mathcal{C}_D(\bR)$, with $p\geq 1$ and quantile functions $F_1^{-1}$ and $F_2^{-1}$, respectively.
Computing the 1- and 2-Wasserstein distances of a probability distribution $F$ to the uniform law ${\cal U}[0,1]$ yields
\begin{align}
    \omega_1(F,{\cal U}[0,1])&=\int_0^1 \lvert F^{-1}(q)-q\rvert dq =\int_{\bR} \lvert y-F(y) \rvert d F(y)=\bE \lvert Y-F(Y)\rvert, \nonumber  \\
    \omega_2^2(F,{\cal U}[0,1])& = \frac{1}{3} - \bE(Y \vee Y_1) + \bE(Y^2) = \frac{1}{3} + \int\limits_0^1 F(u) \left(F(u)- 2u\right) du, \label{eq:2Wasserst}
\end{align}
where $Y,Y_1$ are independent random variables with c.d.f. $F$, cf. \cite{spodarev2022}. 

For max-linear combinations, we will measure the distance between the uniform law ${\cal U}[0,1]$ and the distribution $F_{\mathbf{w}}$ of random variable $H_\alpha(M(\mathbf{w},\mathbf{X}))$ appearing in the excursion metric $\mathcal{E}_{H_\alpha}.$ Recall that $$F_{\mathbf{w}}(u)=\bP(H_\alpha( M(\mathbf{w},\mathbf{X}))\leq u)=\bP\left(U_0\leq u^{l_\mathbf{X}(\mathbf{w}^{\alpha})}\right)=u^{l_\mathbf{X}(\mathbf{w}^{\alpha})},\,u\in [0,1].$$ Direct calculations yield
\begin{equation}\label{eq:1Wasserst}
\omega_1(F_{\mathbf{w}},{\cal U}[0,1])=\int_0^1 \Big\lvert q^{ l^{-1}_\mathbf{X}(\mathbf{w}^{\alpha})   }-q \Big\rvert dq =\frac{1}{2}\frac{\lvert l_\mathbf{X}(\mathbf{w}^{\alpha}) -1\rvert }{l_\mathbf{X}(\mathbf{w}^{\alpha}) +1},\end{equation}
which coincides, by Corollary \ref{cor:exMetr}, with 
$$
\mathcal{E}_{H_\alpha}\left(S_\mathbf{X}(\mathbf{w}) X_k,X_k\right)=
\left\lvert  \frac{1}{2}- \frac{1 }{l_\mathbf{X}(\mathbf{w}^{\alpha}) +1} \right\rvert.
$$
Alongside with relation \eqref{eq:ExcMetr}, this allows for a representation of the $\omega_1$ -metric in terms of max-linear combinations:
$$\omega_1(F_{\mathbf{w}},{\cal U}[0,1])=\frac{1}{2}\lvert 2 \bE H_\alpha( M(\mathbf{w},\mathbf{X}))-1\rvert=\mathcal{E}_{H_\alpha}\left(S_\mathbf{X}(\mathbf{w}) X_k,X_k\right).$$
Similarly, one can write
   \begin{equation} \label{omMS3} 
   \omega_2^2(F_{\mathbf{w}},{\cal U}[0,1]) =\frac{2}{3}\frac{(l_\mathbf{X}(\mathbf{w}^{\alpha})-1)^2}{(2l_\mathbf{X}(\mathbf{w}^{\alpha})+1)(l_\mathbf{X}(\mathbf{w}^{\alpha})+2)}.
\end{equation}

\section{Models of max-stable random fields}\label{subsec24}
Now let us recall three max-stable random field models that we will use for simulation study in Section \ref{sec7}. Denote by $F_\Sigma$ be the cumulative distribution function and by $f_\Sigma$  the probability density function of a $d$-variate centered normal distribution $N(\mathbf{0},\Sigma)$ with positive definite covariance matrix $\Sigma\in \bR^{d\times d}$, i.e.
$	f_\Sigma(x) = (2\pi)^{-d/2} (\text{det}(\Sigma))^{-d/2} \exp\bigl(-\frac{1}{2} x^\top \Sigma^{-1} x\bigr),$ $x\in\bR^d.$

\subsection{Brown-Resnick random field}
 Let $\{Y^{(j)}, j\in\bN\}$ be independent copies of a centered Gaussian random field $\{Y_t, t\in\bR^d\}$ with stationary increments and variance $\variance Y_t = \sigma^2(t)$. Let	$\{\xi_i\}_{j\in\bN}$	be the points of a Poisson process on $\bR$ with intensity measure $e^{-x}dx$ which is independent of $\{Y^{(j)}, j\in\bN\}$. Then the \textit{Brown-Resnick} random field is defined as
	\begin{equation}\label{brownresnick_proc_eq}
		R_t := \max\limits_{j\in\bN}\bigl\{\xi_j + Y^{(j)}_t-\sigma^2(t)/2\bigr\}, \hspace{0.2cm} t\in\bR^d. 
	\end{equation}

It is stationary and has standard Gumbel-distributed margins 
$ 	\bP(R_t\leq x) = e^{-e^{-x}},$ $x\in\bR $  (see \cite{kabluchko2009}). Applying the transformation
\begin{equation}\label{brownresnick_frechet_version}
	B_t := e^{R_t} = \max\limits_{j\in\bN} \exp\bigl(\xi_j+Y^{(j)}_t-\sigma^2(t)/2\bigr), \hspace{0.2cm} t\in\bR^d,
\end{equation} 
we obtain a random field $B_t$ with $H_1$-distributed margins.
The finite-dimensional distributions of the Brown-Resnick random field are given by \cite{kabluchko2009} as
\begin{equation}
   -\log \label{BR:fdd}\bP(R_{t_1}\leq y_1,\ldots,R_{t_n}\leq y_n)=\bE \exp\left\{\bigvee_{j=1}^{n}\left(Y_{t_j}-\frac{\sigma^2(t_j)}{2}-y_j\right)\right\}
\end{equation}
for $t_1,\ldots,t_n\in \bR^d$ and $y_1,\ldots,y_n\in \bR.$ Therefore, the tail dependence function of the max-stable random vector $\mathbf{B}_T=(B_{t_1},\ldots, B_{t_n})$, $T:=(t_1,\ldots,t_n)$ equals 
\begin{align*}
    &l_{\mathbf{B}_T}(x_1,\ldots,x_n)=-\log \bP(B_{t_1}\leq x^{-1}_1,\ldots,B_{t_n}\leq x^{-1}_n)\\
    &=-\log \bP(R_{t_1}\leq -\log x_1,\ldots,R_{t_n}\leq -\log x_n)\\
    &=\bE \exp\left\{\bigvee_{j=1}^{n}\left(Y_{t_j}-\frac{\sigma^2(t_j)}{2}+\log x_j\right)\right\}\\
     &=\bE \bigvee_{j=1}^{n}x_j\exp\left\{\left(Y_{t_j}-\frac{\sigma^2(t_j)}{2}\right)\right\}=\bE \bigvee_{j=1}^{n}x_j  Z_j=\|\mathbf{x}\|_{D(\mathbf{B}_T)}
\end{align*}
for $\mathbf{x}=(x_1,\ldots,x_n)\in\bR_+^n$,
where the vector $\mathbf{Z}=(Z_1,\ldots,Z_n)$ is the generator of the corresponding $D(\mathbf{B}_T)-$norm. Evidently, $Z_j=\exp(Y_{t_j}-\sigma^2(t_j)/2)$ with $\bE Z_j=1,$ $j=1,\ldots, n$ and $\|\cdot\|_{D(\mathbf{B}_T)}$ belongs to the class of so-called \emph{H\"usler-Reis $D-$norms}.
We arrived at the following 
\begin{proposition}
    The tail dependence function $l_{\mathbf{B}_T}$ of $\mathbf{B}_T=(B_{t_1},\ldots, B_{t_n})$  equals
    \begin{align*}
        l_{\mathbf{B}_T}((x_1,\ldots,x_n))&=\int_{\bR^n}\bigvee_{j=1}^n x_j e^{z_j-\frac{1}{2}\sigma^2(t_j)}f_{\Sigma_n}(\mathbf{z})d \mathbf{z}\\
        &=\int_{0}^{+\infty}\left[1-F_{\Sigma_n}\left(\log\frac{y}{x_1}+\frac{\sigma^2(t_1)}{2},\ldots,\log\frac{y}{x_n}+\frac{\sigma^2(t_n)}{2}\right)\right]dy
    \end{align*}
   for $\mathbf{x}=(x_1,\ldots,x_n)\in\bR_+^n$,  where  $\mathbf{z} = (z_1,\ldots,z_n)$, $d\mathbf{z} = dz_1\ldots dz_n$, and $\Sigma_n$ is the covariance matrix of the underlying Gaussian random vector $(Y_{t_1},\ldots,Y_{t_n}).$
\end{proposition}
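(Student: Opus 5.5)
The first equality will come directly from the computation just before the statement. We start from $l_{\mathbf{B}_T}(\mathbf{x})=\bE\bigvee_{j=1}^n x_j Z_j$ with $Z_j=\exp(Y_{t_j}-\sigma^2(t_j)/2)$. Since the Gaussian field is centered, the vector $(Y_{t_1},\ldots,Y_{t_n})$ is centered Gaussian with covariance matrix $\Sigma_n$. Writing the expectation of the measurable nonnegative function $\mathbf{z}\mapsto\bigvee_j x_j e^{z_j-\sigma^2(t_j)/2}$ against the density $f_{\Sigma_n}$ gives the integral over $\bR^n$. This requires $\Sigma_n$ to be positive definite so that the density exists. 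If $\Sigma_n$ were degenerate, the first expression would be read as an integral against the Gaussian law, and the second expression below would still hold, since it only uses the distribution function $F_{\Sigma_n}$.

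For the second equality we use the layer-cake formula for the nonnegative random variable $V:=\bigvee_{j=1}^n x_jZ_j$:
\begin{equation*}
\bE V=\int_0^{\infty}\bP(V>y)\,dy=\int_0^\infty\bigl[1-\bP(V\le y)\bigr]dy .
\end{equation*}
By the first step, $\bE V=l_{\mathbf{B}_T}(\mathbf{x})\le\|\mathbf{x}\|_1<\infty$, so the integral is finite. Next we rewrite the event $\{V\le y\}$. For $y>0$ and $x_j>0$,
\begin{equation*}
x_j e^{Y_{t_j}-\sigma^2(t_j)/2}\le y \iff Y_{t_j}\le \log\frac{y}{x_j}+\frac{\sigma^2(t_j)}{2}.
\end{equation*}
Taking the intersection over $j$ gives $\bP(V\le y)=F_{\Sigma_n}\bigl(\log\frac{y}{x_1}+\frac{\sigma^2(t_1)}{2},\ldots,\log\frac{y}{x_n}+\frac{\sigma^2(t_n)}{2}\bigr)$, which is exactly the claimed formula.

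The only delicate point is a coordinate with $x_j=0$, which is allowed since $\mathbf{x}\in\bR^n_+$ only excludes the origin. In that case the $j$-th condition holds for every $y>0$, and this matches the convention $\log(y/0)=+\infty$, which sets the corresponding argument of $F_{\Sigma_n}$ to $+\infty$ (marginalization). We will state this convention explicitly, or equivalently note that the formula is first proved for $\mathbf{x}$ with positive entries and then extended by this interpretation. Apart from that, the proof consists only of these two routine identities.
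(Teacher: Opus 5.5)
Your proposal is correct and follows essentially the same route as the paper: the first identity is the expectation $\bE\bigvee_j x_jZ_j$ written against the Gaussian density, and the second combines the layer-cake formula $\bE V=\int_0^\infty(1-\bP(V\le y))\,dy$ with rewriting $\{V\le y\}$ as a rectangle event for $(Y_{t_1},\ldots,Y_{t_n})$. Your extra remarks are valid refinements that the paper leaves implicit: degenerate $\Sigma_n$ (for instance $t_j=0$ with Brownian motion), and coordinates $x_j=0$ handled via the convention $\log(y/0)=+\infty$.
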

\begin{proof} The first assertion in the proposition is trivial. As for the second assertion,
it holds $\bE \bigvee_{j=1}^{n}x_j  Z_j=\int_0^\infty (1-F_M(y))dy,$
where $F_M$ denotes the cumulative distribution function of $M:=\exp\left[\bigvee_{j=1}^n\left(Y_{t_j}-\frac{\sigma^2(t_j)}{2}+\log x_j\right)\right].$
 Then 
\begin{align*}
    F_M(y)&=\bP\left(Y_{t_j}-\frac{\sigma^2(t_j)}{2}+\log x_j\leq \log y,\,  j=1,\ldots,n\right)\\
    &=\int_{-\infty}^{\log\frac{y}{x_1}+\frac{\sigma^2(t_1)}{2}}\cdots\int_{-\infty}^{\log\frac{y}{x_n}+\frac{\sigma^2(t_n)}{2}} f_{\Sigma_n}(\mathbf{z})d\mathbf{z}\\
    &=F_{\Sigma_n}\left(\log\frac{y}{x_1}+\frac{\sigma^2(t_1)}{2},\ldots,\log\frac{y}{x_n}+\frac{\sigma^2(t_n)}{2}\right), \quad y>0,
\end{align*}
which completely proves the proposition.
\end{proof}
\begin{example}\label{ex:BRP} Let  $\{B_t:t\in\bR\}$ be the Brown-Resnick process \eqref{brownresnick_frechet_version} with $d=1$ and its underlying process $Y=\{Y_t:t\in\bR\}$ being a Brownian motion with variance $\sigma^2(t)=\sigma^2_Bt$ for some volatility parameter $\sigma_B>0$. The variogram $\gamma(t_1,t_2):=\frac12 \bE (Y_{t_1}-Y_{t_2})^2$ of $Y$ is given as $\gamma(t_1,t_2)=\frac12 \sigma_B^2\lvert t_1-t_2\rvert$. For $n=2$, consider $T=(t_1, t_2)$, $0\leq t_1<t_2$. Then the tail dependence function of the random vector $(B_{t_1}, B_{t_2})$ is 
	\begin{equation}\notag
		 l_{\mathbf{B}_T}((x_1,x_2)) = x_1 F_0\biggl(\sqrt{\frac{\gamma(t_1,t_2)}{2}}+\frac{\log(x_1/x_2)}{\sqrt{2\gamma(t_1,t_2)}}\biggr)+x_2 F_0\biggl(\sqrt{\frac{\gamma(t_1,t_2)}{2}}+\frac{\log(x_2/x_1)}{\sqrt{2\gamma(t_1,t_2)}}\biggr),
	\end{equation}
	where $F_0$ is the standard normal distribution on $\bR$ and $x_1,x_2>0$, see \cite[p.58]{falk2019}.
\end{example}
As shown in \cite[Proposition 3.1]{KablSchlather10}, Brown-Resnick processes ($d=1$) are ergodic whenever $\sigma^2(t)\to +\infty$ as $ t \to +
\infty$.

\subsection{Smith random field} 
Let $\{\xi_j, \varepsilon_j\}_{j\in\bN}$ be the points of a Poisson process on $(0,\infty)\times \bR^d$ with intensity measure $x^{-2}dx dy$. 
Then the \textit{Smith model} is described by the random field
\begin{equation}\label{smith_proc_eq}
	S_t := \max\limits_{j\in\bN} \bigl\{\xi_j f_\Sigma(t-\varepsilon_j)\bigr\}, \hspace{0.2cm} t\in\bR^d,
\end{equation}
where $\Sigma$ is a positive definite $d\times d$ matrix, 
see \cite{smith1990}. 
The finite-dimensional distributions of the Smith model are given by 
\begin{equation*}
    \label{SM:fdd}\bP(S_{t_1}\leq y_1,\ldots,S_{t_n}\leq y_n)=\exp\left(-\int_{\bR^d} \max_{j=1,\ldots,n}\frac{f_\Sigma(t_j-\mathbf{z})}{y_j} d\mathbf{z}\right)
\end{equation*}
for $y_1,\ldots, y_n>0$ and  $t_1,\ldots,t_n\in \bR^d.$ Therefore,   the tail dependence function of  $\mathbf{S}_T=(S_{t_1},\ldots,S_{t_n})$ is given by
    \begin{align*}
        &l_{\mathbf{S}_T}((x_1,\ldots,x_n))=\int_{\bR^d} \bigvee_{j=1}^{n}x_j f_\Sigma(t_j-\mathbf{z})d\mathbf{z}=\int_{\bR^d} \bigvee_{j=1}^{n}x_j \frac{f_\Sigma(t_j-\mathbf{z})}{f_\Sigma(\mathbf{z})}f_\Sigma(\mathbf{z})d\mathbf{z}\\
        &=\int_{\bR^d} \bigvee_{j=1}^{n}x_j \exp\left(t_j^{\top}\Sigma^{-1}\mathbf{z}-\frac{1}{2}t_j^{\top}\Sigma^{-1}t_j \right)f_\Sigma(\mathbf{z})d\mathbf{z}\\
        &=\bE \bigvee_{j=1}^{n}x_j  Z_j=\|\mathbf{x}\|_{D(\mathbf{z}_T)}, \quad \mathbf{x}=(x_1,\ldots, x_n)\in \bR^n_+,
    \end{align*}
    where $Z_j=\exp\left(t_j^{\top}\Sigma^{-1}(\mathbf{A}-t_j/2) \right)=\exp\left(Y_j-\sigma^2(t_j)/2 \right),$ $Y_j:=t_j^{\top}\Sigma^{-1}\mathbf{A},$ 
    $\sigma^2(t):=t^{\top}\Sigma^{-1}t,$  $j=1,\ldots,n$ and $\mathbf{A}\sim N(0,\Sigma).$ Therefore, $D(\mathbf{S}_T)$ is a  H\"usler-Reis $D-$norm as well. Thus, $\{S_t, t\in\bR^d   \}$ can be considered as a special case of Brown-Resnick random fields generated by a linear Gaussian field $Y_t=t^{\top} \tilde{Y},t\in \bR^d$ and $\tilde{Y}=\Sigma^{-1}\mathbf{A}\sim N(0,\Sigma^{-1}),$  and  hence is stationary as well.

    For $d=1$, $n=2$, $T=(t_1,t_2)$, $0\le t_1<t_2$ the tail dependence function of the random vector $(S_{t_1},S_{t_2})$ has the same form as $l_{\mathbf{B}_T}$ from Example \ref{ex:BRP} but with variogram $\gamma(t_1,t_2)=\sigma_S^{-2} (t_1-t_2)^2/2$, where $\sigma_S^2>0$ is the variance parameter of the univariate normal probability density function.

    
 Since $\sigma^2(t)\to +\infty$ as $ t \to +
\infty$, the Smith random process ($d=1$) is ergodic.

    
\subsection{Extremal Gaussian random field}

Let $\{Y^{(j)}, j\in\bN\}$ be independent copies of a stationary centered Gaussian random field $\{Y_t: t\in\bR^d\}$. Let $\{\xi_j\}_{j\in\bN}$ be the points of a Poisson process on $(0,\infty)$ with intensity measure $\sqrt{2\pi} x^{-2}dx$. Then the random field defined by
\begin{equation}\label{extremalgaussian_proc_eq}
	G_t := \max\limits_{j\in\bN} \xi_j (Y^{(j)}_t \vee 0), \hspace{0.2cm} t\in\bR^d 
\end{equation}
is called an \textit{extremal Gaussian} random field.

The extremal Gaussian random field is stationary and has unit Fr\'echet margins, see e.g. \cite[Theorems 1,2]{schlather2002}. The finite-dimensional distributions of the extremal Gaussian random field can be found as follows. Denote by $\Sigma_n$ the covariance matrix of $Y_{t_1},\ldots,Y_{t_n},$ then
$\{(\xi_j, Y^{(j)}_{t_1},\ldots,Y^{(j)}_{t_n}),j\in \bN \}$ equals in distribution to a Poisson point process on $(0,\infty)\times\bR^n$ with intensity measure $$\Lambda(I\times (-\infty,y_1]\times\cdots\times (-\infty,y_n])=\sqrt{2\pi}F_{\Sigma_n}(y_1,\ldots,y_n)\int_I x^{-2}dx$$ for any interval $I\subset (0,\infty)$ and any $y_1,\ldots,y_n\in\bR$.\\
Thus, $\bP(G_{t_1}\leq y_1,\ldots,G_{t_n}\leq y_n)$ equals $\exp(-\Lambda(U)),$
where 
$$U=\{(x,z_1,\ldots,z_n):\,x (z_j \vee 0) >y_j \text{ for some } j=1,\ldots,n\}.$$ Then
\begin{align*}
    \Lambda(U)&=\int_{\bR^n} \int_0^\infty \mathbf{1} \left\{x>\min_{j=1,\ldots,n}\frac{y_j}{z_j\vee 0}\right\} x^{-2}f_{\Sigma_n}(\mathbf{z})dx d\mathbf{z}\\
    &=\int_{\bR^n} \max_{j=1,\ldots,n}\left\{\frac{z_j\vee 0}{y_j}\right\} f_{\Sigma_n}(\mathbf{z})d\mathbf{z}.
\end{align*}
Therefore, for $y_1,\ldots,y_n>0$ we get
\begin{align*}
    &\bP(G_{t_1}\leq y_1,\ldots,G_{t_n}\leq y_n)=\exp\left(-\bE\left[\max_{j=1,\ldots,n}\left\{\frac{Y_{t_j}\vee 0}{y_j}\right\}\right]\right)
    \end{align*}
    and, consequently,  for $\mathbf{G}_T=(G_{t_1},\ldots, G_{t_n})$, $T:=(t_1,\ldots,t_n)$ 
\begin{align*}    
    &l_{\mathbf{G}_T}(\mathbf{x})=\bE\left[0\vee\max_{j=1,\ldots,n}\left\{x_j Y_{t_j}\right\}\right]=\int_0^{+\infty}\left[1-F_{\Sigma_n}\left(\frac{y}{x_1},\ldots,\frac{y}{x_n}\right)\right]dy,
\end{align*}
where $\mathbf{x}=(x_1,\ldots,x_n)$, $x_1,\ldots,x_n\ge 0$. For $n=2$, this  simplifies to 
\begin{align}    \label{eq:L_G}
    &l_{(G_{t_1}, G_{t_2})}((x_1,x_2))=\frac{1}{2} \left(x_1+x_2+\sqrt{x_1^2-2\rho_{t_1 t_2}x_1 x_2+x_2^2}\right),
\end{align}
where $\rho_{t_1 t_2}$ is the correlation coefficient of $Y_{t_1}$ and $Y_{t_2}$, cf. \cite{schlather2002}.

The extremal Gaussian random process ($d=1$) is not ergodic if the correlation function $\rho_{0t}$, $t\in\bR$ of $\{Y_t: t\in\bR\}$ satisfies $\rho_{0t}>-1+\delta$ for some small $\delta>0$ a.e. on $\bR$. To see this, we use \cite[Theorem 3.2]{KablSchlather10} to show that  $\lim_{ T \to +\infty} \frac{1}{T} \int_0^T (2-\theta_G(t))\, dt > 0$, where $\theta_G(t):=l_{(G_0,G_t)}((1,1))$ is the extremal coefficient. Indeed, formula \eqref{eq:L_G} yields $2-\theta_G(t)=1- \sqrt{(1-\rho_{0t})/2}\ge \delta/4 + o(\delta)>0 $ for almost every $t\in \bR$.


\section{Max-stable prediction via excursion sets}\label{sec3}

In this section, we use the prediction theory for heavy-tailed random fields based on their excursion metric \cite{spodarev2022} to make forecasts in the max-stable case. Let $X=\rf$ be a real-valued, stationary and max-stable random field with Fr\'echet($\alpha$) marginals. Let $\bZ_h = (h_1\bZ)\times\ldots\times(h_d\bZ)$ be a $d$-dimensional grid with mesh sizes $(h_1,\ldots,h_d)\in (0,\infty)^d$. The observations of $X$ at points $\mathbb{T}_0:= \bZ_h\cap W_o$ form a sample $ \{X_{t}:t\in \mathbb{T}_0\},$ where $W_o\subset \bR^d$ is a compact set.

Let $t_0\in \bZ_h\setminus \mathbb{T}_0$ be a prediction location. Our goal is to predict the unobserved value $X_{t_0}$ based on the values $X_{t_j}$ at index locations $\mathbb{T}_f:=\{t_1,\ldots,t_n\}\subset \mathbb{T}_0 $ 
via max-stable predictor $$\hat{X}_{\mathbf{\lambda}}:=M(\mathbf{\lambda},\mathbf{X}(\mathbb{T}_f))=\max\limits_{j=1,\ldots,n} \lambda_j X_{t_j},$$ where  $\mathbf{X}(\mathbb{T}_f)=\{ X_{t_j},t_j\in \mathbb{T}_f \}$ is the \textit{forecast sample}. Note that $\mathbb{T}_f$  usually contains $n$ spots $t_j$ closest  to location $t_0.$ The observations  $\{ X_{t_j},t_j\in \mathbb{T}_0 \setminus \mathbb{T}_f\}$ will be used to create multiple learning samples in order to assess the values of the involved excursion and Wasserstein metrics, compare relation \eqref{min_problem} at the end of this section.

\begin{definition}\label{Xp_definition}
	The \textit{law-preserving predictor} of $X_{t_0}$ is given by
	\begin{equation}\notag
		\hat{X}_{\hat{\mathbf{\lambda}}} := \max\limits_{j=1,\ldots,n} \hat{\lambda}_j X_{t_j},
	\end{equation}
	where
	\begin{equation}\label{constraint1}
		\hat{\mathbf{\lambda}} = (\hat{\lambda}_1,\ldots,\hat{\lambda}_n) =\argmin\limits_{\mathbf{\lambda}\in\bR^n_+} \left\{\mathcal{E}_{H_\alpha}\left (X_{t_0},\hat{X}_{{\mathbf{\lambda}}}\right): X_{t_0} \stackrel{d}{=} \hat{X}_{{\mathbf{\lambda}}}\right\}.
	\end{equation}
\end{definition}

The predictor is designed in the form of a max-linear combination in order to preserve the same marginal distribution as the random variable being predicted. The law-preserving constraint in \eqref{constraint1} requires the knowledge of a subset $\Lambda_L\subset\bR^n_+$ such that
$ X_{t_0} \stackrel{d}{=} \hat{X}_\mathbf{\lambda}$,  $\mathbf{\lambda} \in \Lambda_L.$ Since both $X_{t_0}$ and $M(\mathbf{\lambda},\mathbf{X}(\mathbb{T}_f))$ have an $\alpha$-$\fre$  distribution, they are equally distributed if their scale parameters coincide, i.e.  $S_{\mathbf{X}(\mathbb{T}_f)}(\mathbf{\lambda})=S_{X_{t_0}}(1)=1.$ Recall that  $S^\alpha_{\mathbf{X}(\mathbb{T}_f)}(\mathbf{\lambda})=\|\mathbf{\lambda}^\alpha\|_{D(\mathbf{Z})}$, where $\|\cdot\|_{D(\mathbf{Z})}$ is a $D$-norm  with a generator $\mathbf{Z}=(Z_{t_j},t_j\in \mathbb{T}_f).$ Thus, $$\Lambda_L=\{\mathbf{\lambda}\in \bR_+^n:\|\mathbf{\lambda}^\alpha\|_{D(\mathbf{Z})}=1\}$$ is the $1/\alpha$-power transformation (applied coordinatewise) of the nonnegative quadrant of the unit $D(\mathbf{Z})-$ball $\mathbb{S}^+_{D(\mathbf{Z})}=\{\mathbf{\lambda}\in \bR_+^n:\|\mathbf{\lambda}\|_{D(\mathbf{Z})}=1\}.$

By Corollary \ref{cor:exMetr}, we can rewrite the minimization functional in \eqref{constraint1} in terms of characteristics of the $(n+1)$-variate random vector $\mathbf{X}_{c}:=\mathbf{X}(\mathbb{T}_c)=(X_{t_0},X_{t_1},\ldots,X_{t_n}),$ where $\mathbb{T}_c=(t_0,\mathbb{T}_f)\in \bZ_h^{n+1}.$ Then for $\mathbf{\lambda}  \in \Lambda_L,$  relation \eqref{Em:MC3:eq} yields  
\begin{equation}\notag
	\mathcal{E}_{H_\alpha}\left(X_{t_0},\hat{X}_\mathbf{\lambda}\right)=\rho_{\mathbf{X}_{c}}((1,\mathbf{0}),(0,\mathbf{\lambda}))=1-\frac{2}{l_{\mathbf{X}_c}((1,\mathbf{\lambda}^\alpha))+1},
\end{equation}
 where 
 $\mathbf{0}:=(0,\ldots,0), \mathbf{\lambda}=(\lambda_1,\ldots,\lambda_n) \in\bR_+^{n}$. 

Applying the latter relation, we reformulate the law-preserving prediction as follows:
\begin{proposition}\label{l_corollary}
 The optimal weight vector ${\hat{\mathbf{\lambda}}} =(\hat{\lambda}_1,\ldots,\hat{\lambda}_n) $ from Definition \ref{Xp_definition} has the form 
	\begin{equation}\label{min_problem_l}
	\hat{\mathbf{\lambda}}=\argmin \limits_{\mathbf{\lambda}\in\bR_+^{n}}\{	l_{\mathbf{X}_c}((1,\mathbf{\lambda}^\alpha)):\quad \,l_{\mathbf{X}_c}((0,\mathbf{\lambda}^\alpha))=1 \}.
	\end{equation}
\end{proposition}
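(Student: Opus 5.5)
The plan is to translate both the constraint and the objective in \eqref{constraint1} into statements about the tail dependence function $l_{\mathbf{X}_c}$ of the extended vector $\mathbf{X}_c=(X_{t_0},\mathbf{X}(\mathbb{T}_f))$. The argmin in \eqref{constraint1} then becomes the argmin in \eqref{min_problem_l}.

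\emph{The constraint.} Both $X_{t_0}$ and $\hat X_{\mathbf{\lambda}}=M(\mathbf{\lambda},\mathbf{X}(\mathbb{T}_f))$ are $\alpha$-Fr\'echet. By \eqref{eqScaleM}, $\hat X_{\mathbf{\lambda}}$ has scale $S^\alpha_{\mathbf{X}(\mathbb{T}_f)}(\mathbf{\lambda})=l_{\mathbf{X}(\mathbb{T}_f)}(\mathbf{\lambda}^\alpha)$, while $X_{t_0}$ has scale $1$. Hence $X_{t_0}\stackrel{d}{=}\hat X_{\mathbf{\lambda}}$ holds iff $l_{\mathbf{X}(\mathbb{T}_f)}(\mathbf{\lambda}^\alpha)=1$.

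Next I check that $l_{\mathbf{X}_c}((0,\mathbf{y}))=l_{\mathbf{X}(\mathbb{T}_f)}(\mathbf{y})$. By \eqref{eqScaleM}, $l_{\mathbf{X}_c}((0,\mathbf{y}))$ is the $\alpha$-th power of the scale of $M((0,\mathbf{y}^{1/\alpha}),\mathbf{X}_c)=M(\mathbf{y}^{1/\alpha},\mathbf{X}(\mathbb{T}_f))$, and that is exactly $l_{\mathbf{X}(\mathbb{T}_f)}(\mathbf{y})$. One can also see this directly from $l(\mathbf{y})=-\log G_\alpha(\mathbf{y}^{-1/\alpha})$: a zero coordinate gives an infinite threshold, so the corresponding component is marginalized out. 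The constraint set is therefore $\Lambda_L=\{\mathbf{\lambda}\in\bR^n_+: l_{\mathbf{X}_c}((0,\mathbf{\lambda}^\alpha))=1\}$.

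\emph{The objective.} For $\mathbf{\lambda}\in\Lambda_L$, apply \eqref{Em:MC3:eq} with $\gamma_0=1$ and $\mathbf{w}=\mathbf{\lambda}$:
\begin{equation*}
\mathcal{E}_{H_\alpha}(X_{t_0},\hat X_{\mathbf{\lambda}})=\frac12+\frac12-\frac{2}{l_{\mathbf{X}_c}((1,\mathbf{\lambda}^\alpha))+1}=1-\frac{2}{l_{\mathbf{X}_c}((1,\mathbf{\lambda}^\alpha))+1}.
\end{equation*}
Here $l_{\mathbf{X}(\mathbb{T}_f)}(\mathbf{\lambda}^\alpha)=1$ on $\Lambda_L$, and $\mathcal{E}$ is symmetric, so the order of arguments does not matter. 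The map $x\mapsto 1-2/(x+1)$ is strictly increasing on $[0,\infty)$. Minimizing the excursion metric over $\Lambda_L$ is therefore equivalent to minimizing $l_{\mathbf{X}_c}((1,\mathbf{\lambda}^\alpha))$ over $\Lambda_L$, with the same set of minimizers.

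\emph{Main obstacle.} The only real subtlety is the marginalization identity $l_{\mathbf{X}_c}((0,\mathbf{y}))=l_{\mathbf{X}(\mathbb{T}_f)}(\mathbf{y})$; I would justify it through the max-linear-combination route above to avoid working with infinite arguments. The statement is an identity of argmin sets, so existence of a minimizer is not needed here. Beyond these two points the argument is bookkeeping.
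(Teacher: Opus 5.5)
Your proposal is correct and follows essentially the same route as the paper. The paper identifies the law-preserving constraint with the scale condition $S_{\mathbf{X}(\mathbb{T}_f)}(\mathbf{\lambda})=1$, i.e.\ $l_{\mathbf{X}_c}((0,\mathbf{\lambda}^\alpha))=1$, and then uses Corollary~\ref{cor:exMetr} to write the excursion metric on $\Lambda_L$ as $1-2/(l_{\mathbf{X}_c}((1,\mathbf{\lambda}^\alpha))+1)$; you additionally spell out the marginalization identity $l_{\mathbf{X}_c}((0,\mathbf{y}))=l_{\mathbf{X}(\mathbb{T}_f)}(\mathbf{y})$ and the strict monotonicity step, both of which the paper leaves implicit.
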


Now we are going to restate Proposition \ref{l_corollary} in terms of $D$-norms taking into account their connection to tail dependence functions and the scale parameter $S_{\mathbf{X}_c}$. To this end, introduce the notation $\mathbf{w}=\mathbf{\lambda}^\alpha=(\lambda_1^\alpha,\ldots, \lambda_n^\alpha)$.

\begin{remark}[Geometrical interpretation]\label{remGeomInt}
Let $\mathbf{Z}$ be a generator of $(n+1)$-variate D-norm $\|\cdot\|_{D(\mathbf{Z})}$ associated with the tail dependence function $l_{\mathbf{X}_{c}}$ of the random vector $\mathbf{X}_{c}$.  The minimization problem \eqref{min_problem_l}  has the following geometric formulation: 
\begin{equation}\label{min_problem_dnorm}
		\| (1,\mathbf{w})\|_{D(\mathbf{Z})} \to \min\limits_{\mathbf{w} \in \mathbb{S}^+_{D(\mathbf{Z})}},   
	\end{equation}
    where  $\mathbb{S}^+_{D(\mathbf{Z})}= \{ \mathbf{w} \in \bR^n_+: \| (0,\mathbf{w}) \|_{D(\mathbf{Z})} = 1\}$.
	In other words, the solution  $\widehat{\mathbf{w}}$  to the problem \eqref{min_problem_dnorm} lies on a convex surface that is part of the $(n-1)$-dimensional unit sphere with respect to the $D(\mathbf{Z})$-norm $\|\cdot\|_{D(\mathbf{Z})}$. Similarly,  the prediction weights lie on the boundary of an ellipsoid in the Gaussian case \cite{das2022}. If $(X_{t_0},X_{t_1},\ldots,X_{t_n})$ is a subgaussian random vector with stability index $\alpha\in(0,2)$ and i.i.d. standard Gaussian components, then $\Lambda_L$ is also a unit sphere in $\bR^n$ with respect to the Euclidean norm $\|\cdot\|_2$ (see \cite{spodarev2022}).	

     The corresponding predictor $\hat{X}_\mathbf{\lambda}$ of   $X_{t_0}$ writes now $\hat{X}_\mathbf{\lambda}=M(\widehat{\mathbf{w}}^{1/\alpha},\mathbf{X}(\mathbb{T}_f))$.
	 If $\alpha>1$, we may take the generator  $\mathbf{Z}=\left( \Gamma(1-1/\alpha) \right)^{-1}\mathbf{X}_c$. Then the target functional in \eqref{min_problem_dnorm} can be replaced by the expectation $\bE(X_{t_0}\vee \eX_\mathbf{\lambda})$, which is finite in this case.
\end{remark}
\begin{remark}[Polar coordinates formulation]
The constraint $\mathbf{w} \in \mathbb{S}^+_{D(\mathbf{Z})}$ in \eqref{min_problem_dnorm} can be reduced to a unit simplex (using the norm $\|\cdot\|_1$) or, more generally, a  positive quadrant $\mathbb{S}^+_n$ of a unit sphere with respect to an arbitrary norm $\|\cdot\|$. To see this, rewrite the problem \eqref{min_problem_dnorm} in terms of the tail dependence function $l_{\mathbf{X}_c}$:
\begin{equation}\label{min_problem_dep_fct}
\hat{\mathbf{w}}:=\argmin \limits_{\mathbf{w} \in \bR^{n}_+  }
    \{ l_{\mathbf{X}_c}\left( (1,\mathbf{w})\right): \; l_{\mathbf{X}_c}\left( (0,\mathbf{w})\right)=1 \}.  
	\end{equation}
Change variables  $\mathbf{w}=r \mathbf{z}$ to polar coordinates with respect to a norm $\|\cdot\|$ in $\mathbb{R}^n_+$, where $r=\|\mathbf{w}\|$ and $\mathbf{z}=r^{-1}\mathbf{w}$.
Use the homogeneity of order one of $l_{\mathbf{X}_c}(\cdot)$   on $\bR^{n+1}_+$ to write

$l_{\mathbf{X}_c}\left( (0,\mathbf{w})\right)=l_{\mathbf{X}_c}\left( (0,r\mathbf{z})\right)
=r l_{\mathbf{X}_c}\left( (0,\mathbf{z})\right)=1,$ from which we conclude $r^{-1}=l_{\mathbf{X}_c}\left( (0,\mathbf{z})\right)$.
    Consequently, the optimal weights in \eqref{min_problem_dep_fct} get the form $$\hat{\mathbf{w}}=l^{-1}_{\mathbf{X}_c}\left( (0,\hat{\mathbf{z}})\right)\hat{\mathbf{z}}$$  with
    \begin{equation}
    \label{min:prob:z}
        \hat{\mathbf{z}}:=\argmin \limits_{\mathbf{z}\in \mathbb{S}^+_n }
    \{	l^{-1}_{\mathbf{X}_c}\left( (0,{\mathbf{z}})\right) l_{\mathbf{X}_c}\left( (l_{\mathbf{X}_c}\left( (0,{\mathbf{z}})\right),{\mathbf{z}})\right) \} ,
    \end{equation} 
    which yields an equivalent formulation of \eqref{min_problem_dnorm}.
\end{remark}
\begin{remark}[Nonconvexity]	In general, the optimization problems \eqref{min_problem_l}-\eqref{min_problem_dep_fct} are not convex, compare \cite{yau1974}.
	However, their convexity  is granted if $\mathbb{S}^+_{D(\mathbf{Z})}$ is a unit simplex, i.e., 
    $X_{t_1},\ldots,X_{t_n}$ are stochastically independent. In this case, it holds $l_{\mathbf{X}_c}\left( (0,\mathbf{w})\right)=w_1+\ldots+w_n$, $\mathbf{w}=(w_1,\ldots,w_n)\in \bR^n_+$.
\end{remark}

If the exact shape of $\mathbb{S}^+_{D(\mathbf{Z})}$ is not explicitly known, we have to find a work-around. First, notice that the condition $X_{t_0} \stackrel{d}{=} \hat{X}_\mathbf{\lambda}\sim H_\alpha$ rewrites
\begin{equation}\notag
	 \exp(-X_{t_0}^{-\alpha}) \stackrel{d}{=} \exp(-\eX_\lambda^{-\alpha})\sim {\cal U}[0,1].
\end{equation} 
Thus, we can replace the constraint in \eqref{constraint1} by adding a penalty term using the \textit{squared 2-Wasserstein distance} \eqref{eq:2Wasserst} given in Section \ref{subsec23}. Although the use of 1-Wasserstein distance is also possible in this context, it is numerically inconvenient due to the absolute value inside of \eqref{eq:1Wasserst} which adds an extra non-differentiability issue to the stochastic gradient descent methods from Section \ref{sec6}.

\begin{definition}\label{eX_definition}
	The \textit{max-stable predictor} of $X_{t_0}$ is given by
	 \begin{equation}\notag
		\eX_{\hat{\mathbf{\lambda}}} = M(\hat{\mathbf{\lambda}},\mathbf{X}(\mathbb{T}_f)) = \max\limits_{j=1,\ldots,n} \{\hat{\lambda}_j X_{t_j}\},
 \end{equation}
	where the vector of optimal weights $\hat{\mathbf{\lambda}}=(\hat{\lambda}_1,\ldots, \hat{\lambda}_n)$ is a solution to minimization problem 
\begin{equation}\begin{split}\label{constraint2}
	\Psi(\lambda) := & 2\bE\left(e^{-(X_{t_0} \vee \eX_\lambda)^{-\alpha}}\right) - \bE\left(e^{-\eX_\lambda^{-\alpha}}\right) - 1/2 \\ + & \gamma\left(\frac{1}{3} - \bE\left(e^{-\eX_\lambda^{-\alpha}} \vee Y \right) + \bE\left(e^{-2 \eX_\lambda^{-\alpha}}\right)\right) \longrightarrow \min\limits_{\mathbf{\lambda}\in\bR^n_+}, \end{split}
\end{equation}
	 $Y$ is an independent copy of $e^{-\eX_\mathbf{\lambda}^{-\alpha}}$, and $\gamma>0$ is a penalty weight.  If it is not possible to generate $Y$, one can use an alternative form  of the 2-Wasserstein metric in \eqref{eq:2Wasserst} and write
	  \begin{equation}
	\begin{split}
		\hat{\mathbf{\lambda}} &= \argmin\limits_{\mathbf{\lambda}\in\bR^n_+} \left\{2\bE\left(e^{-(X_{t_0} \vee \eX_\mathbf{\lambda})^{-\alpha}}\right) - \bE\left(e^{-\eX_\mathbf{\lambda}^{-\alpha}}\right) -1/2 \right. \\
        &+ \left. \gamma \left(\frac{1}{3}+ \int\limits_0^1 F(u) \left(F(u)- 2u\right) du \right) \right\}
	\end{split} \label{constraint3} \end{equation}
	with $F$ being the distribution function of $e^{-\eX_\mathbf{\lambda}^{-\alpha}}$.
\end{definition}
The constants $1/2$ and $\gamma/3$ above are completely irrelevant for the minimization and are kept only to preserve the excursion metric range and the non-negativity of the penalty function. 
 As already mentioned in \eqref{eqScaleM}, the predictor $\eX_{{\mathbf{\lambda}}}$ is $H_\alpha(\cdot,S_{\mathbf{X}(\mathbb{T}_f)}(\mathbf{\lambda}))$-distributed
	for any $\mathbf{\lambda} \in \bR^n_+$.

Using the substitution $w_j=\lambda_j^\alpha$, $j=1,\ldots, n$ as above, let us reformulate this  problem in terms of the function $l_{\mathbf{X}_c}$. The following lemma is a direct consequence of relations  \eqref{Em:MC3:eq} and \eqref{omMS3}:

\begin{lemma}\label{lem:WD}
The minimization problem \eqref{constraint3} is equivalent to 
\begin{equation}\label{min_problem_3}
\begin{split}
\Psi_1({\bf w})&:=\frac{1}{l_{\mathbf{X}_c}\left((0, {\bf w})\right)+1} -\frac{2 }{l_{\mathbf{X}_c}\left((1, {\bf w})\right)+1}\\
&+ \frac{\gamma}{3}\cdot  \frac{2\left(l_{\mathbf{X}_c}\left((0,{\bf w})\right) -1\right)^2}{\left( 2l_{\mathbf{X}_c}\left((0,{\bf w})\right)+1\right) \left( l_{\mathbf{X}_c}\left((0,{\bf w})\right)+2\right) }  \to \min\limits_{{\bf w}\in\bR^n_+},\end{split}
\end{equation}
	where ${\bf w}=(w_1,\ldots,w_n)$ and $l_{\mathbf{X}_c}$ is the tail dependence function of $(X_{t_0}, X_{t_1},\ldots, X_{t_n})$.
\end{lemma}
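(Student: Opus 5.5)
The plan is to show that the objective in \eqref{constraint3}, viewed as a function of $\mathbf{\lambda}$, coincides with $\Psi_1(\mathbf{\lambda}^\alpha)$ up to an additive constant. Since $\mathbf{\lambda}\mapsto\mathbf{w}=\mathbf{\lambda}^\alpha$ is a bijection of $\bR^n_+$ onto itself, the two minimization problems then have corresponding minimizers. I will treat the excursion part and the Wasserstein part separately.

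For the excursion part, note that $e^{-x^{-\alpha}}=H_\alpha(x)$ for $x>0$. The first three terms of \eqref{constraint3} are therefore $2\bE H_\alpha(X_{t_0}\vee\eX_\mathbf{\lambda})-\bE H_\alpha(\eX_\mathbf{\lambda})-1/2$. Since $\bE H_\alpha(X_{t_0})=1/2$, this equals $\mathcal{E}_{H_\alpha}(X_{t_0},\eX_\mathbf{\lambda})$ by \eqref{eq:ExcMetr}. Next apply \eqref{Em:MC3:eq} from Corollary \ref{cor:exMetr} with $\gamma_0=1$, $\mathbf{X}=\mathbf{X}(\mathbb{T}_f)$ and $\mathbf{X}_e=\mathbf{X}_c$. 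The tail dependence function of the subvector satisfies $l_{\mathbf{X}(\mathbb{T}_f)}(\mathbf{w})=l_{\mathbf{X}_c}((0,\mathbf{w}))$, which I will verify by setting the first argument of $G_\alpha$ to $+\infty$. This gives
\[
\mathcal{E}_{H_\alpha}(X_{t_0},\eX_\mathbf{\lambda})=\frac12+\frac{1}{l_{\mathbf{X}_c}((0,\mathbf{w}))+1}-\frac{2}{l_{\mathbf{X}_c}((1,\mathbf{w}))+1},
\]
which is the first line of $\Psi_1$ plus the constant $1/2$.

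For the penalty part, $F$ is the distribution function of $H_\alpha(\eX_\mathbf{\lambda})$, that is, $F=F_{\mathbf{w}}$ with $F_{\mathbf{w}}(u)=u^{l_{\mathbf{X}_c}((0,\mathbf{w}))}$ as computed before \eqref{eq:1Wasserst}. By \eqref{eq:2Wasserst}, $\frac13+\int_0^1F(F-2u)\,du=\omega_2^2(F_{\mathbf{w}},{\cal U}[0,1])$. Substituting \eqref{omMS3} with $l=l_{\mathbf{X}_c}((0,\mathbf{w}))$ yields $\frac{2}{3}\frac{(l-1)^2}{(2l+1)(l+2)}$.

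I will check this by direct computation: $\int_0^1 u^{2l}\,du-2\int_0^1 u^{l+1}\,du+\frac13=\frac{1}{2l+1}-\frac{2}{l+2}+\frac13$. Over the common denominator this becomes $\frac{2(l-1)^2}{3(2l+1)(l+2)}$, which matches \eqref{omMS3}. Multiplying by $\gamma$ gives the last term of $\Psi_1$, so the objective of \eqref{constraint3} equals $\Psi_1(\mathbf{\lambda}^\alpha)+1/2$ and the lemma follows.

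The only step needing care is the consistency of the tail dependence functions under marginalization, $l_{\mathbf{X}(\mathbb{T}_f)}(\mathbf{w})=l_{\mathbf{X}_c}((0,\mathbf{w}))$, together with the boundary case where some $\lambda_j=0$. Both follow directly from the definition $l(\mathbf{y})=-\log G_\alpha(\mathbf{y}^{-1/\alpha})$ with the convention $0^{-1/\alpha}=\infty$.
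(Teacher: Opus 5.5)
Your proposal is correct and takes the same route as the paper, which states the lemma as a direct consequence of \eqref{Em:MC3:eq} with $\gamma_0=1$ (for the excursion part) and \eqref{omMS3} (for the Wasserstein penalty with $F(u)=u^{l_{\mathbf{X}_c}((0,\mathbf{w}))}$). Your additional checks fill in details the paper leaves implicit: the marginalization identity $l_{\mathbf{X}(\mathbb{T}_f)}(\mathbf{w})=l_{\mathbf{X}_c}((0,\mathbf{w}))$, the integral behind \eqref{omMS3}, and the additive constant $1/2$, which matches the paper's remark $\Psi_1(\mathbf{w})=\Psi(\mathbf{w}^{1/\alpha})-1/2$.
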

The above lemma will be used to examine the non--uniqueness of forecasts in Section \ref{sec5}. For the numerical prediction of $X_{t_0}$, we design methods based on formulations  \eqref{constraint2}-\eqref{constraint3} by approximating the involved expectations by finite sums.

Let the random field $\{X_t: t\in T\}$ be ergodic. Our forecast method makes predictions of $X_{t_0}$ using $N$ \textit{learning samples} $\{X_t:t- k_j \in \mathbb{T}_f\}$, where $k_j$ are some shifts such that $\mathbb{T}_f+k_j \subset \mathbb{T}_0 \setminus \mathbb{T}_f$,  $j=1,\ldots,N$. To calculate the forecast weights $\hat{\lambda}$ numerically, we replace the expectations in \eqref{constraint2} or \eqref{constraint3} with their empirical counterparts and find an approximation of $\hat{\lambda}$ as the solution of a minimization problem with respect to the function $\Phi$:
\begin{equation}\label{min_problem}
	\Phi(\lambda) := \frac{1}{N}\sum\limits_{j=1}^{N} Q_j(\lambda)  \to \min\limits_{\lambda\in\bR^n_+},
\end{equation}
where
\begin{equation}\notag
	\begin{split}
		Q_j(\lambda):&= 2e^{-(X_{t_0+k_j} \vee M(\lambda,\mathbb{T}_f+k_j))^{-\alpha}}-e^{-M(\lambda,\mathbb{T}_f+k_j)^{-\alpha}} -1/2\\
		&+\gamma\biggl(\frac{1}{3}-\left(e^{-M(\lambda,\mathbb{T}_f+k_j)^{-\alpha}}\vee Y_j\right)+e^{-2M(\lambda,\mathbb{T}_f+k_j)^{-\alpha}}\biggr)
	\end{split}
\end{equation}
with $Y_j$ being independent copies of $e^{-\eX_\lambda^{-\alpha}}$ in case of formulation \eqref{constraint2} and
\begin{equation}\notag
	\begin{split}
		Q_j(\lambda) &:= 2e^{-(X_{t_0+k_j} \vee M(\lambda,\mathbb{T}_f+k_j))^{-\alpha}}-e^{-M(\lambda,\mathbb{T}_f+k_j)^{-\alpha}}-1/2+\gamma/3 \\ &+\gamma e^{-2M(\lambda,\mathbb{T}_f+k_j)^{-\alpha}} \\
		&- \frac{\gamma}{N}\biggl(e^{-M(\lambda,\mathbb{T}_f+k_j)^{-\alpha}} + 2 \sum\limits_{m=1}^{j-1}e^{-M(\lambda,\mathbb{T}_f+k_m)^{-\alpha}} \vee e^{-M(\lambda,\mathbb{T}_f+k_j)^{-\alpha}} \biggr),
	\end{split}
\end{equation}
if we use formulation \eqref{constraint3}. 

\begin{remark}\label{bootstrapping_remark}
	When performing numerical minimization of \eqref{constraint2}, one way to obtain $N$ independent copies $\{Y_1,\ldots,Y_N\}$ of $e^{-\eX_\lambda^{-\alpha}}$ is bootstrapping. For this, we sample $h_1,\dots,h_N$ from $\{1,\dots,N\}$ with replacement and set $B_j=\mathbf{X}(\mathbb{T}_f+k_{h_j})$. Afterwards, we can calculate
	\begin{equation}\notag
		Y_j = \exp\left(-M( \lambda, B_j)^{-\alpha}\right), \hspace{0.2cm} j = 1,\ldots,N,
	\end{equation}	 
	for a given $\lambda$.
\end{remark}


\begin{remark}\label{remark:mse}
	The penalty weight $\gamma>0$ can be tuned numerically if multiple independently simulated samples $(X_{t_0}^{(j)},X^{(j)})=(X_{t_0}^{(j)},X_{t_1}^{(j)},\dots,X_{t_n}^{(j)})$, $j=1,\dots,K$ of $(X_{t_0}, X_{t_1},\ldots, X_{t_n})$  are available. Introduce the empirical distribution functions $\bar{F}_{\hat{U}_\lambda}$,  $\bar{F}_{\hat{U}_{\hat\lambda},\gamma}$ of $\hat{U}_{{\lambda}}=H_{\alpha}(\eX_{{\lambda}})$, $\hat{U}_{\hat{\lambda}}=H_{\alpha}(\eX_{\hat{\lambda}})$ by
    $$
    \bar{F}_{\hat{U}_\lambda}(x)=\frac1K \sum\limits_{j=1}^K {\One}\left\{H_{\alpha}(\eX_{{\lambda}}^{(j)})\le x\right\}, \quad x\in(0,1),
    $$
    \begin{equation} \label{eq:F_U}
     \bar{F}_{\hat{U}_{\hat\lambda},\gamma}(x)=\frac1K \sum\limits_{j=1}^K {\One}\left\{H_{\alpha}(\eX_{\hat{\lambda}}^{(j)})\le x\right\}, \quad x\in(0,1),   
    \end{equation}
    where $\eX_{{\lambda}}^{(j)}$, $\eX_{\hat{\lambda}}^{(j)}$ are the predicted values of $X_{t_0}$ based on the $j$th sample $X^{(j)}$, $j=1,\ldots, K$, calculated with fixed weights $\mathbf{\lambda}$ and with optimal weights $\hat{\mathbf{\lambda}}_j$.
    Since the parameter $\gamma$ controls the distance in probability law to $X_{t_0}$, it can be determined by minimizing the mean squared error between  $\bar{F}_{\hat{U}_{\hat{\lambda}},\gamma}$ and the cumulative distribution function of the continuous uniform distribution $\mathcal{U}[0,1]$ with respect to $\gamma$. To this end, define
$$\widehat{\text{MSE}}_\lambda:= \frac{1}{M}\sum\limits_{h=1}^M \left(  \bar{F}_{\hat{U}_{{\lambda}}} (x_h)-x_h\right)^2,$$
	where $x_h=h/M$, $h=1,\ldots, M$, $M\in\bN$ are  equidistant points on $(0,1]$. Setting $\widehat{\text{MSE}}(\gamma) :=\widehat{\text{MSE}}_{\hat{\lambda}}$, where $\hat{\mathbf{\lambda}}=\hat{\mathbf{\lambda}}_j$ with probabilities $1/K$, $j=1,\ldots,K$ (compare \eqref{eq:F_U}), we solve
the minimization problem  
	\begin{equation}\notag
		\widehat{\text{MSE}}(\gamma) = \frac{1}{M}\sum\limits_{h=1}^M \left(  \bar{F}_{\hat{U}_{\hat{\lambda},\gamma}}(x_h)-x_h\right)^2\longrightarrow \min_{\gamma>0}.
	\end{equation}

\end{remark}

\begin{remark}\label{rem:ecdf}
  For an  arbitrary, but fixed $\mathbf{\lambda}\in\bR^n_+$, the expectation and the variance of the empirical distribution function $\overline{F}_{\hat{U}_{\lambda}}$ of the random variable   $\hat{U}_{\lambda}=H_{\alpha}(\eX_{{\lambda}})$ at a fixed point $x\in(0,1)$ are given by
    $$
        \mathbb{E}\left[\overline{F}_{\hat{U}_{\lambda}}(x)\right]=x^{b_{\lambda}}, \quad
        \text{Var}\left[\overline{F}_{\hat{U}_{\lambda}}(x)\right]=\frac{1}{K}x^{b_{{\lambda}}} \left( 1-x^{b_{\lambda}}\right),
   $$
    where $b_{\lambda}=l_X(\lambda^{\alpha})$ and $l_X$ is the tail dependence function of $X^{(j)}=(X_{t_1}^{(j)},\dots,X_{t_n}^{(j)})$. Using this, we compute the expectation 
    \begin{align*}
        \mathbb{E}\left[\widehat{\text{MSE}_\lambda }\right]&= \frac{1}{MK}\sum_{h=1}^{M}  x_h^{b_{{\lambda}}} \left( 1-x_h^{b_{\lambda}}\right)+  \frac{1}{M}\sum_{h=1}^{M} \left( x_h^{b_{{\lambda}}}-x_h\right)^2 \\
        &\sim    \frac{1}{K} \int\limits_0^1 x^{b_{{\lambda}}} \left( 1-x^{b_{\lambda}}\right)\, dx  +   \int\limits_0^1 \left( x^{b_{{\lambda}}}-x\right)^2  \, dx \\
        &\longrightarrow \frac{2}{3} \frac{\left( b_{\lambda}-1\right)^2}{ \left( 2b_{\lambda}+1\right) \left( b_{\lambda}+2\right)}, \qquad M, K\to \infty,
    \end{align*}
     which is non--vanishing only if  $b_{\lambda}\neq 1$. Note that 
     $$\mathbb{E}\left[\widehat{\text{MSE}}(\gamma)\right]= \mathbb{E}\left( \mathbb{E}\left[\widehat{\text{MSE}}_\lambda   \mid  \lambda=  \hat{\lambda} \right] \right),$$ connecting the above calculation to Remark~\ref{remark:mse}. 
     If the prediction is close to law preserving (which happens for $\theta_X\approx 1$ or  large penalty weights $\gamma$) then $b_{\hat\lambda}\approx 1$ (compare formulation \eqref{min_problem_l}) and thus the asymptotic value of expected $\widehat{\text{MSE}}(\gamma)$ is close to zero.
\end{remark}


\section{Existence of the forecast}\label{sec4}

Now we check that the minimization problems  \eqref{min_problem_l}-\eqref{min_problem_3} have a solution, before trying to find their  numerical approximations in Sections \ref{sec6} and \ref{sec7}.
Although that follows from \cite[Theorem 5.1]{spodarev2022}, whose conditions (i)-(iii) are easily verifiable in the max-stable case, we give independent and shorter existence proofs in Theorems \ref{Thm:exist}  and \ref{main_existence_theorem} below. The uniqueness of the solution is in general not given. Here we refer to papers \cite{DavisResnick,DavisResnick93} as well as Section \ref{sec5} for examples.  

\begin{theorem}
     \label{Thm:exist}
    There exists a solution to minimization problems \eqref{min_problem_l}-\eqref{min:prob:z}.
\end{theorem}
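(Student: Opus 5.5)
Since \eqref{min_problem_l}, \eqref{min_problem_dnorm}, \eqref{min_problem_dep_fct} and \eqref{min:prob:z} are equivalent reformulations of one another, via the substitution $\mathbf{w}=\mathbf{\lambda}^\alpha$ and the polar change of variables from the remarks above, it suffices to prove that \eqref{min_problem_dep_fct} has a minimizer. The tool is the Weierstrass extreme value theorem: we need a continuous target on a nonempty compact feasible set.

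\emph{Continuity.} By \eqref{eq:Ineql} and convexity, $l_{\mathbf{X}_c}$ is a finite convex function on $[0,\infty)^{n+1}$. Being a $D$-norm, it is in fact Lipschitz there: by sublinearity and monotonicity,
\[
\lvert l_{\mathbf{X}_c}(\mathbf{x})-l_{\mathbf{X}_c}(\mathbf{y})\rvert\le l_{\mathbf{X}_c}(\lvert \mathbf{x}-\mathbf{y}\rvert)\le \|\mathbf{x}-\mathbf{y}\|_1 .
\]
Hence both maps $\mathbf{w}\mapsto l_{\mathbf{X}_c}((1,\mathbf{w}))$ and $\mathbf{w}\mapsto l_{\mathbf{X}_c}((0,\mathbf{w}))$ are continuous on $[0,\infty)^n$.

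\emph{Compactness.} Let $K:=\{\mathbf{w}\in[0,\infty)^n:\ l_{\mathbf{X}_c}((0,\mathbf{w}))=1\}$. It is closed, as the preimage of a point under a continuous map. It is bounded, since \eqref{eq:Ineql} gives $\|\mathbf{w}\|_\infty\le l_{\mathbf{X}_c}((0,\mathbf{w}))=1$ on $K$. It does not contain $\mathbf{0}$, because $l_{\mathbf{X}_c}(\mathbf{0})=0\neq1$. So $K\subset\bR^n_+$ is compact. It is also nonempty: for any $\mathbf{z}\in\bR^n_+$ we have $l_{\mathbf{X}_c}((0,\mathbf{z}))\ge\|\mathbf{z}\|_\infty>0$, and homogeneity gives $l^{-1}_{\mathbf{X}_c}((0,\mathbf{z}))\,\mathbf{z}\in K$.

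\emph{Conclusion.} The continuous function $\mathbf{w}\mapsto l_{\mathbf{X}_c}((1,\mathbf{w}))$ attains its minimum on the nonempty compact set $K$ at some $\hat{\mathbf{w}}$. Then $\hat{\mathbf{\lambda}}=\hat{\mathbf{w}}^{1/\alpha}$ solves \eqref{min_problem_l}, since $\mathbf{w}\mapsto\mathbf{w}^{1/\alpha}$ is a homeomorphism of $[0,\infty)^n$. For \eqref{min:prob:z}, the target $\mathbf{z}\mapsto l^{-1}_{\mathbf{X}_c}((0,\mathbf{z}))\,l_{\mathbf{X}_c}\bigl((l_{\mathbf{X}_c}((0,\mathbf{z})),\mathbf{z})\bigr)$ is continuous on the compact set $\mathbb{S}^+_n$, because the denominator is bounded below by $\|\mathbf{z}\|_\infty$. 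That gives $\|\mathbf{z}\|_\infty>0$ at each point, and hence $\min_{\mathbb{S}^+_n}\|\mathbf{z}\|_\infty>0$ by compactness. So a minimizer of \eqref{min:prob:z} exists as well; alternatively, one can just map $\hat{\mathbf{w}}$ to $\hat{\mathbf{w}}/\|\hat{\mathbf{w}}\|$.

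The only mildly delicate point is ruling out degeneration of the feasible set to the excluded origin $\mathbf{0}\notin\bR^n_+$ and guaranteeing boundedness. Both follow from the lower bound in \eqref{eq:Ineql}, so I expect no real obstacle.
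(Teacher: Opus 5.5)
Your proof is correct and follows the paper's argument. You reduce to \eqref{min_problem_dep_fct}, note that the constraint set is closed and, by the lower bound in \eqref{eq:Ineql}, contained in the unit cube, and then apply Weierstrass to the continuous map $\mathbf{w}\mapsto l_{\mathbf{X}_c}((1,\mathbf{w}))$. Your Lipschitz bound, your check that $\mathbf{0}\notin K$, and your nonemptiness argument tighten steps the paper passes over quickly: convexity alone only guarantees continuity in the interior of the orthant, and closedness in $\bR^n_+$ alone would not give compactness.
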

\begin{proof} 
Using the equivalent formulation \eqref{min_problem_dep_fct} of the above minimization problems, we notice that the function $l_{\mathbf{X}_c}$ is convex  and thus continuous on $\mathbb{R}^{n+1}_+$.  By its continuity, the constraint set
$$\mathbb{S}^+_{D(\mathbf{Z})}=\{\mathbf{w} \in \bR^{n}_+  : \; l_{\mathbf{X}_c}\left( (0,\mathbf{w})\right)=1 \}$$
is closed in $\bR^{n}_+$. Due to the inequality \eqref{eq:Ineql}, 
$\mathbb{S}^+_{D(\mathbf{Z})}$ is a subset of the unit cube $\{\mathbf{w} \in \bR^{n}_+  : \; \|\mathbf{w}\|_\infty\le 1 \}$ and thus bounded. Hence, the continuous function $l_{\mathbf{X}_c}\left( (1,\mathbf{w})\right)$ attains its minimum on the compact $\mathbb{S}^+_{D(\mathbf{Z})}$.
\end{proof}
Let us concentrate on the minimization problems  \eqref{constraint2}-\eqref{min_problem_3}.
 Since they are all equivalent, we are free to choose any of them to work with. For instance, it holds
$\Psi_1({\bf w}) = \Psi({\bf w}^{1/\alpha}) - 1/2$,
where $\Psi$ and $\Psi_1$ are the functionals from \eqref{constraint2} and \eqref{min_problem_3}, respectively. Now we prove the main result of this section. 



\begin{theorem}\label{main_existence_theorem}
	The minimization problems \eqref{constraint2}-\eqref{min_problem_3}
	have a solution.
\end{theorem}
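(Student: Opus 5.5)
Since all formulations \eqref{constraint2}--\eqref{min_problem_3} are equivalent via the substitution $w_j=\lambda_j^\alpha$ (which is a homeomorphism of $\bR^n_+$ onto itself), I will work with $\Psi_1$ from Lemma \ref{lem:WD}. The plan is to write $\Psi_1$ as a continuous function of the two scalar quantities $a(\mathbf{w}):=l_{\mathbf{X}_c}((0,\mathbf{w}))$ and $b(\mathbf{w}):=l_{\mathbf{X}_c}((1,\mathbf{w}))$. I will then show that the minimization can be confined to a compact set, and conclude by Weierstrass' theorem.

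\textbf{Step 1: continuity.} By convexity, $l_{\mathbf{X}_c}$ is continuous on $\bR^{n+1}_+$, as already used in Theorem \ref{Thm:exist}. Hence $a$ and $b$ are continuous on $[0,\infty)^n$. On the set where $a\geq 0$, the map
\[
(a,b)\mapsto \frac{1}{a+1}-\frac{2}{b+1}+\frac{2\gamma}{3}\frac{(a-1)^2}{(2a+1)(a+2)}
\]
is continuous, since all denominators are at least $1$. So $\Psi_1$ is continuous on $[0,\infty)^n$. I will deal with the missing origin (excluded from $\bR^n_+$) by extending $\Psi_1$ to $\mathbf{0}$ continuously. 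If the infimum turned out to be attained only at $\mathbf{0}$, that would need to be excluded separately (Step 3).

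\textbf{Step 2: coercivity.} By \eqref{eq:Ineql},
\[
\|\mathbf{w}\|_\infty\le a(\mathbf{w})\le b(\mathbf{w})\le 1+\|\mathbf{w}\|_1 .
\]
Therefore, as $\|\mathbf{w}\|\to\infty$:
\begin{itemize}[label={}]
\item the first term tends to $0$;
\item the second term tends to $0$ from below;
\item the penalty tends to $\gamma/3$.
\end{itemize}
So $\Psi_1(\mathbf{w})\to \gamma/3$ uniformly in the direction.

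It then suffices to exhibit a point with $\Psi_1<\gamma/3$. Take any $\mathbf{w}_0$ on the constraint set $a(\mathbf{w}_0)=1$, for instance $\mathbf{w}_0=\mathbf{e}_1$. There the penalty vanishes, and since $b(\mathbf{w}_0)\le 2$,
\[
\Psi_1(\mathbf{w}_0)=\tfrac12-\frac{2}{b+1}\le \tfrac12-\tfrac23<0<\gamma/3 .
\]
Hence there is an $R$ such that $\Psi_1>\Psi_1(\mathbf{w}_0)$ whenever $\|\mathbf{w}\|_\infty>R$. The infimum over $\bR^n_+$ therefore equals the infimum over the compact set $K=\{\mathbf{w}\in[0,\infty)^n:\|\mathbf{w}\|_\infty\le R\}$, and it is attained on $K$.

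\textbf{Step 3: the origin.} This is the main obstacle, because $\mathbf{0}\notin\bR^n_+$. At $\mathbf{w}=\mathbf{0}$ we have $a=0$ and $b=1$, so
\[
\Psi_1(\mathbf{0})=1-1+\frac{2\gamma}{3}\cdot\frac12=\frac{\gamma}{3}>\Psi_1(\mathbf{w}_0).
\]
Thus the minimizer on $K$ is not the origin, and it lies in $\bR^n_+$. The same comparison shows that the minimizer lies in the compact set $K\setminus B_\varepsilon(\mathbf{0})$ for small $\varepsilon>0$, by continuity at $\mathbf{0}$. Transforming back via $\lambda=\mathbf{w}^{1/\alpha}$ gives a solution of \eqref{constraint2} and \eqref{constraint3}.
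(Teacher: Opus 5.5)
Your proof is correct and follows essentially the same route as the paper's: continuity of $\Psi_1$ as a rational function of $l_{\mathbf{X}_c}$, the limit $\gamma/3$ as $\|\mathbf{w}\|_\infty\to\infty$ obtained from \eqref{eq:Ineql}, comparison with the value at $\mathbf{e}_1$, and Weierstrass' theorem on a compact box. You are more explicit than the paper in excluding the origin, which lies in $[0,M]^n$ but not in $\bR^n_+$; the paper only records the limit $\gamma/3$ there. Your bound $\Psi_1(\mathbf{e}_1)\le -1/6$ is also the accurate one, whereas the paper's stated bound $-1/4$ fails for $\theta^{\{0,1\}}_{\mathbf{X}_c}>5/3$ (harmlessly, since only $\Psi_1(\mathbf{e}_1)<\gamma/3$ is needed).
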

	\begin{proof}
The tail dependence function $l_{\mathbf{X}_c}$ is  continuous. Since $\Psi_1$ is a rational function of $l_{\mathbf{X}_c}$, it is also  continuous on  $\bR^n_+$.
By inequality \eqref{eq:Ineql} and easy calculations, it holds 
\begin{eqnarray*}
			\lim\limits_{{\bf w} \searrow 0} l_{\mathbf{X}_c}\left((0, {\bf w})\right) = 0, \quad 
            \lim\limits_{{\bf w} \searrow 0} l_{\mathbf{X}_c}\left((1, {\bf w})\right) = 1, \\
             \lim\limits_{\|{\bf w}\|_\infty \to \infty}l_{\mathbf{X}_c}\left((0, {\bf w})\right) = \lim\limits_{\|{\bf w}\|_\infty \to \infty}l_{\mathbf{X}_c}\left((1, {\bf w})\right) = \infty, \\
             l_{\mathbf{X}_c}\left((0, {\bf e}_1)\right)=1, \quad  l_{\mathbf{X}_c}\left((1, {\bf e}_1)\right)=\theta_{\mathbf{X}_c}^{\{0,1\}}\in [1,2],
\end{eqnarray*} 
and thus
\begin{eqnarray*}
			\lim\limits_{{\bf w} \searrow 0} \Psi_1({\bf w}) = \lim\limits_{\|{\bf w}\|_\infty \to \infty} \Psi_1({\bf w})  = \gamma/3, \\
             \Psi_1({\bf e}_1)  =\frac{\theta_{\mathbf{X}_c}^{\{0,1\}}-3}{2(\theta_{\mathbf{X}_c}^{\{0,1\}}+1) }\le - 1/4<\gamma/3.
\end{eqnarray*} 
Hence, the minimum of $\Psi_1$ is attained within a bounded set, i.e., there exists an $M\in(0,\infty)$ with
			\begin{equation}\notag
				\min\limits_{{\bf w}\in\bR^n_+} \Psi_1({\bf w}) = \min\limits_{{\bf w}\in[0,M]^n} \Psi_1({\bf w}).
			\end{equation}
			To summarize, the continuous function $\Psi_1$ attains its minimum on the compact set $[0,M]^n$, which justifies the existence of a solution to  minimization problems \eqref{constraint2}-\eqref{min_problem_3}.
	\end{proof}

\section{Non-uniqueness of the forecast}\label{sec5}

It is a common knowledge that the metric projection onto a subspace $E$ of a functional space $L^1[0,1]$ with the metric  induced by the norm  $\| \cdot\|_{L^1[0,1]}$ is not unique. Moreover, there can exist infinitely many functions from $E$ realizing this projection.

Recall that $l_{\mathbf{X}_c}$ is the tail dependence function of $(X_{t_0}, X_{t_1},\ldots, X_{t_n})$. Since it is an $L^1[0,1]$-norm of a max-linear combination \eqref{eq:l_w_int} of probability densities $p_j$, it is natural to expect the same situation with the non-uniqueness of our forecast. Moreover, since the (non-constrained) metric projections with respect to excursion metric and Davis-Resnick distance are equivalent (cf. Section \ref{Subsect:DR}), the non-uniqueness example of a MARMA(1,1)-process from  \cite[Section 4]{DavisResnick} applies also to our case.

We now make a more general statement about the non-uniqueness of the solution of problems \eqref{min_problem_l}-\eqref{min_problem_3}.
\begin{lemma}\label{lem:NonU0}
	If  $l_{\mathbf{X}_c} \left((w_0, {\bf w})\right)$ is symmetric with respect to $ {\bf w}\in \bR^n_+$ for $w_0=0$ and $w_0=1$,  	
	  then  the solutions to prediction problems \eqref{min_problem_l}-\eqref{min_problem_3}
	are not unique.
	\end{lemma}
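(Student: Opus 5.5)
Interpreting "symmetric" as invariance under every permutation $\pi$ of the coordinates of $\mathbf w$, i.e. $l_{\mathbf{X}_c}((w_0,\mathbf{w}_\pi))=l_{\mathbf{X}_c}((w_0,\mathbf{w}))$ for $w_0\in\{0,1\}$ (it suffices to have a single transposition, say of the first two coordinates), the plan is a symmetry-breaking argument. All objective functionals and constraints in \eqref{min_problem_l}, \eqref{min_problem_dep_fct}, \eqref{min:prob:z} and \eqref{min_problem_3} depend on $\mathbf{w}$ only through $l_{\mathbf{X}_c}((0,\mathbf{w}))$ and $l_{\mathbf{X}_c}((1,\mathbf{w}))$. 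For \eqref{constraint2}, \eqref{constraint3} this holds after the substitution $\mathbf w=\lambda^\alpha$, via Lemma~\ref{lem:WD}, and for \eqref{min_problem_l} via Proposition~\ref{l_corollary}. Hence if $\hat{\mathbf w}$ is a minimizer, so is $\hat{\mathbf w}_\pi$ for every permutation $\pi$, and the feasible sets $\mathbb{S}^+_{D(\mathbf Z)}$ and $\bR^n_+$ are permutation invariant. Existence of at least one minimizer is provided by Theorems~\ref{Thm:exist} and \ref{main_existence_theorem}.

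It then remains to exclude that every minimizer is a fixed point of all permutations, i.e. of the form $\hat{\mathbf w}=c\mathbf 1$. So the key step is to show that no vector $c\mathbf{1}$ is optimal. The most robust route I see uses convexity: by symmetry and convexity of $l_{\mathbf{X}_c}$, its restriction to a permutation-invariant convex set is minimized at a symmetric point, which points the wrong way. I would therefore instead compare $c\mathbf 1$ with a concentrated vector such as $c'\mathbf e_1$.

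For the constrained problem \eqref{min_problem_dep_fct}, feasibility forces $c'=1$ and $c=1/l_{\mathbf{X}_c}((0,\mathbf 1))$. Using the spectral representation \eqref{eq:l_w_int}, I would aim for the inequality
\[
l_{\mathbf{X}_c}((1,c\mathbf 1))\ \ge\ l_{\mathbf{X}_c}((1,\mathbf e_1))
\]
by writing both sides as $1+\|\,(c\bigvee_j p_j - p_0)_+\|_{L^1}$ and $1+\|(p_1-p_0)_+\|_{L^1}$ and exploiting symmetry. I suspect this inequality need not hold in general, so this step is the main obstacle.

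If it fails, I would fall back on a simpler dichotomy. Either the set of minimizers contains a non-symmetric point, and then its distinct permutations give non-uniqueness. Or the minimizer is $c\mathbf 1$, and then I would show that a strict improvement or a tie is obtained along the direction $\mathbf e_1-\mathbf e_2$. Since $l_{\mathbf{X}_c}$ is convex and symmetric in $(w_1,w_2)$, the function
\[
t\mapsto l_{\mathbf{X}_c}\bigl((w_0,\,c\mathbf 1+t(\mathbf e_1-\mathbf e_2))\bigr)
\]
is even and convex. I would then examine whether it is constant near $t=0$. For $L^1$ norms of maxima, as in \eqref{eq:l_w_int}, such flatness is typical, which yields infinitely many minimizers in the spirit of the $L^1$ projection remark preceding the lemma.

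I expect this last flatness or comparison step to be where the real work lies, and the paper likely settles it by examples or an additional implicit assumption.
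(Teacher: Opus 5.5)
Your first paragraph is exactly the paper's whole proof. Every objective and constraint depends on $\mathbf w$ only through $l_{\mathbf X_c}((0,\mathbf w))$ and $l_{\mathbf X_c}((1,\mathbf w))$, so any permutation of a solution is again a solution. You are right that this gives non-uniqueness only if some solution has two distinct coordinates, a point the paper's one-line argument passes over. The step you leave open, namely excluding that the solution is a unique point $c\mathbf 1$, cannot be supplied, because it is false in general. Let $X_{t_1},X_{t_2}$ be i.i.d.\ $H_\alpha$ and set $X_{t_0}:=2^{-1/\alpha}(X_{t_1}\vee X_{t_2})$, which is again $H_\alpha$-distributed. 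Then $\mathbf X_c$ is max-stable with $l_{\mathbf X_c}((w_0,w_1,w_2))=(w_0/2\vee w_1)+(w_0/2\vee w_2)$, which is symmetric in $(w_1,w_2)$ for every $w_0$. In \eqref{min_problem_dep_fct} the constraint reads $w_1+w_2=1$, and on it the objective equals $1+|w_1-\tfrac12|$, so $\mathbf w=(\tfrac12,\tfrac12)$ is the unique solution. In \eqref{min_problem_3}, $\Psi_1(\mathbf w)$ equals $\mathcal E_{H_\alpha}(X_{t_0},\hat X_\lambda)-\tfrac12$ plus a nonnegative penalty, with $\lambda=\mathbf w^{1/\alpha}$. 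Hence $\Psi_1\ge-\tfrac12$, with equality iff $l_{\mathbf X_c}((0,\mathbf w))=l_{\mathbf X_c}((1,\mathbf w))=1$, i.e.\ again iff $\mathbf w=(\tfrac12,\tfrac12)$, for every $\gamma>0$. At that point the predictor reproduces $X_{t_0}$ exactly.

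The example also shows concretely why your fallback routes fail. The comparison you aimed for is violated: $l_{\mathbf X_c}((1,\tfrac12,\tfrac12))=1<\tfrac32=l_{\mathbf X_c}((1,1,0))$. Along $\mathbf e_1-\mathbf e_2$ there is no flat direction but a strict kink, $l_{\mathbf X_c}((1,\tfrac12+t,\tfrac12-t))=1+|t|$, while the constraint value stays $1$. A rough numerical computation suggests the same behaviour for the 1D Smith process with $\sigma_S=1$, $t_0=0$ and $\mathbb T_f=\{-1,1\}$, there with a smooth strict minimum at the symmetric point. So stationarity does not seem to rescue the statement. Hence neither your proposal nor the paper's argument proves the lemma as stated. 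What your first paragraph does prove completely is the corrected version, with the additional hypothesis that at least one solution is not a multiple of $\mathbf 1$.
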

	\begin{proof}
	It is clear that any permutation of the weights $\hat\lambda_j$ of a solution $\hat\lambda$ to optimization problems \eqref{min_problem_l}-\eqref{min_problem_3} is also their solution, leading to non-unique predictors $\eX_\lambda$.
	\end{proof}

\begin{theorem}\label{thm:NonU}
	Let $n\ge 2$. If there exists a non--decreasing continuous piecewise continuously differentiable function $\zeta:\bR_+\to\bR_+$ with $\lim_{y\to + \infty}\zeta^\prime(y)<\infty$ such that $l_{\mathbf{X}_c} \left((1, {\bf w})\right)=\zeta\left(  l_{\mathbf{X}_c} \left((0, {\bf w})\right) \right)$ for any $ {\bf w}\in \bR^n_+$,  	
	  then the prediction problems \eqref{min_problem_l}-\eqref{min_problem_3}
	have infinitely many solutions for any penalty weight $\gamma>0$.
	\end{theorem}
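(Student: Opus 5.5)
The key observation is that, under the hypothesis of the theorem, every objective involved depends on $\mathbf{w}$ only through the single scalar $s:=l_{\mathbf{X}_c}((0,\mathbf{w}))$. For problem \eqref{min_problem_l} (equivalently \eqref{min_problem_dep_fct}) the constraint fixes $s=1$, so the target $l_{\mathbf{X}_c}((1,\mathbf{w}))=\zeta(1)$ is constant on the whole constraint set $\mathbb{S}^+_{D(\mathbf{Z})}$. Every point of $\mathbb{S}^+_{D(\mathbf{Z})}$ is therefore a minimizer. For the penalized problem \eqref{min_problem_3} of Lemma \ref{lem:WD}, I would write
\[
\Psi_1(\mathbf{w})=g\bigl(l_{\mathbf{X}_c}((0,\mathbf{w}))\bigr),\qquad g(s):=\frac{1}{s+1}-\frac{2}{\zeta(s)+1}+\frac{2\gamma}{3}\,\frac{(s-1)^2}{(2s+1)(s+2)},\quad s>0 .
\]
Then $\min_{\mathbf{w}}\Psi_1=\min_{s>0}g(s)$ as soon as the map $\mathbf{w}\mapsto l_{\mathbf{X}_c}((0,\mathbf{w}))$ is onto $(0,\infty)$, and the solution set is the union of level sets $\{\mathbf{w}: l_{\mathbf{X}_c}((0,\mathbf{w}))=s^*\}$ over minimizers $s^*$ of $g$.

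The steps, in order, are as follows.

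(i) Surjectivity. Homogeneity of order one gives $l_{\mathbf{X}_c}((0,r\mathbf{e}_1))=r$, so every value $s>0$ is attained.

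(ii) Existence of a minimizer $s^*\in(0,\infty)$ of $g$. This follows from Theorem \ref{main_existence_theorem}: a minimizer $\hat{\mathbf{w}}$ of $\Psi_1$ exists, and $s^*:=l_{\mathbf{X}_c}((0,\hat{\mathbf{w}}))$ then minimizes $g$. It is positive because $\hat{\mathbf{w}}\neq\mathbf{0}$ by the boundary limits $\gamma/3$ computed there. One could also argue directly from continuity of $g$, the limits $g(0+)=g(\infty)=\gamma/3$, and $g(1)<\gamma/3$.

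(iii) The level set $L_{s^*}=\{\mathbf{w}\in\bR^n_+: l_{\mathbf{X}_c}((0,\mathbf{w}))=s^*\}=s^*\,\mathbb{S}^+_{D(\mathbf{Z})}$ is infinite for $n\ge2$. Every point of it is a solution, and the solutions carry over to \eqref{constraint2}--\eqref{constraint3} via $\lambda=\mathbf{w}^{1/\alpha}$.

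For (iii), I would use the polar-coordinate parametrization from the Remark leading to \eqref{min:prob:z}. For each direction $\mathbf{z}$ in the positive quadrant $\mathbb{S}^+_n$ of the unit sphere, the point $s^*\,l^{-1}_{\mathbf{X}_c}((0,\mathbf{z}))\,\mathbf{z}$ lies in $L_{s^*}$. Distinct directions give distinct points, since they are positive multiples of non-proportional vectors. Because $n\ge2$, $\mathbb{S}^+_n$ contains a continuum of directions, for instance $\mathbf{z}_\theta\propto(\cos\theta,\sin\theta,0,\dots,0)$. This yields uncountably many minimizers, and hence, via distinct $\lambda$, infinitely many predictors.

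The main obstacle, I expect, is making precise the role of the regularity assumptions on $\zeta$: monotone, continuous, piecewise $C^1$, with bounded derivative at infinity. In the argument above only continuity of $\zeta$ seems necessary, because it gives continuity of $g$ and hence existence of the minimizer $s^*$. The growth condition on $\zeta'$ presumably ensures $\zeta(s)\to\infty$ at most linearly, which is consistent with $s\le\zeta(s)\le s+1$; these bounds already follow from \eqref{eq:Ineql}. I would check that the boundary limits of $g$ follow from these bounds, so that $s^*$ is interior and $\zeta$ does not need further analysis. If the authors instead intend to characterize $s^*$ through $g'(s^*)=0$, the piecewise differentiability would be used there, but that is not needed for non-uniqueness.
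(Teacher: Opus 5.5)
Your proof is correct, and its skeleton matches the paper's. For \eqref{min_problem_l} the target is the constant $\zeta(1)$ on the constraint set. For \eqref{min_problem_3} the objective is a univariate function of $y=l_{\mathbf{X}_c}((0,\mathbf{w}))$, so a minimizer $y_0$ yields a whole level set of solutions.

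The difference is how you obtain $y_0$. The paper first derives $y\le\zeta(y)\le y+1$ from subadditivity and coordinate monotonicity. It then analyses the derivative \eqref{min_problem_4_derivative}, showing $\Psi_1'(0)<0$ and $\Psi_1'(y)>0$ for large $y$. This is the only place where the piecewise $C^1$ assumption and the condition on $\lim\zeta'$ are used.

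You use only continuity of $g$, the common boundary value $\gamma/3$ at $0+$ and at $\infty$, and $g(1)=\Psi_1(\mathbf{e}_1)\le -1/6<\gamma/3$. Alternatively you import Theorem \ref{main_existence_theorem}.

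Your route is more elementary, and it also shows the theorem is more general. The hypothesis forces $\zeta(s)=l_{\mathbf{X}_c}((1,s\mathbf{e}_1))$ for $s>0$. Hence $\zeta$ is automatically continuous, convex and non-decreasing, with $\max(1,s)\le\zeta(s)\le 1+s$. Note that \eqref{eq:Ineql} yields these bounds only when evaluated at $\mathbf{w}=s\mathbf{e}_1$, which is all you need. Consequently the conclusion holds whenever $l_{\mathbf{X}_c}((1,\mathbf{w}))$ is any function of $l_{\mathbf{X}_c}((0,\mathbf{w}))$; monotonicity, piecewise differentiability and the condition on $\zeta'$ are superfluous.

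You also make explicit two points the paper leaves tacit:
\begin{enumerate}
\item surjectivity of $\mathbf{w}\mapsto l_{\mathbf{X}_c}((0,\mathbf{w}))$ onto $(0,\infty)$, which is needed to identify $\min\Psi_1$ with $\min g$;
\item infiniteness of the level set $s^*\,\mathbb{S}^+_{D(\mathbf{Z})}$, which is exactly where $n\ge 2$ enters.
\end{enumerate}
What the paper's derivative computation buys in exchange is a first-order description of $y_0$, as illustrated in Figure \ref{zeta_plot}.

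One caution about your closing remark: distinct weight vectors need not give distinct predictors. In the complete-dependence case of Example \ref{ex:NonU}, every solution has $\|\lambda\|_\infty=1$ and yields the same random variable $X_0$. You should therefore claim infinitely many solutions $\lambda$, as the theorem does, not infinitely many predictors.
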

	\begin{proof}
	Using  the formulation \eqref{min_problem_dep_fct}, the target functional  $l_{\mathbf{X}_c} \left((1, {\bf w})\right)$ is constantly equal to $\zeta(1)$ on the set of ${\bf w}\in \bR^n_+$ such that  $l_{\mathbf{X}_c} \left((0, {\bf w})\right) = 1$, hence the problem \eqref{min_problem_l} has infinitely many solutions. As for the problem  \eqref{min_problem_3}, we may rewrite its target functional in Lemma \ref{lem:WD} as
	
    \begin{equation}\label{min_problem_4}    
	\Psi_{1}(y)= \frac{1}{y+1}-\frac{2}{\zeta(y)+1}  + \frac{\gamma}{3}\cdot \frac{2\left(y -1\right)^2}{\left( 2y+1\right) \left( y+2\right) },
	\end{equation}
	where $y=l_{\mathbf{X}_c} \left((0, {\bf w})\right)$. By homogeneity and continuity of $l_{\mathbf{X}_c}$, it holds $\zeta(0)=1$. Let us show that $\zeta(y)\sim y$ as $y\to + \infty$. 
    Since $l$ is convex and homogeneous of order $1$, it is subadditive.
Therefore, we write
\[
l_{\mathbf{X}_c}((1,\mathbf{w})) 
\leq l_{\mathbf{X}_c}((1,\mathbf{0})) + l_{\mathbf{X}_c}((0,\mathbf{w})) 
= 1 + y.
\]
Also, it holds
$
l_{\mathbf{X}_c}((1,\mathbf{w})) 
\geq l_{\mathbf{X}_c}((0,\mathbf{w})) 
= y
$
by coordinate monotonicity of $l$, which easily follows from representation \eqref{eq:l_w_int}. 
Hence, we get
\begin{equation}\label{eq:zeta}
    y \leq \zeta(y) \leq y + 1,
\end{equation}
which implies
$
\frac{\zeta(y)}{y} \to 1
$ for large $y$.
This yields $\lim_{y\to + \infty}\Psi_{1}(y)=\Psi_{1}(0)=\frac{\gamma}{3}$. 
     The derivative of the above functional  $\Psi_{1}$, where it exists, with respect to $y$ is given by	
	\begin{equation}\label{min_problem_4_derivative}
	{\Psi}^\prime_1(y)=	\frac{2\zeta'(y)}{(\zeta(y)+1)^2} - \frac{1}{(y+1)^2} +  \frac{6 \gamma \left(y^2 -1\right)}{\left(2 y+1\right)^2 \left( y+2\right)^2}.
	\end{equation}
    For exceptional points $y>0$, where $\zeta'$ does not exist, we may consider its left-hand as well as right-hand side derivatives $\zeta'_-(y)$, $\zeta'_+(y)$.
	Since $\zeta$ is non--decreasing, the first summand in the expression  \eqref{min_problem_4_derivative} is non--negative for all $y\ge 0$, while the third summand is  negative for $y\in[0,1)$ and  positive for $y>1$. Moreover, 
    it holds ${\Psi}^\prime_1(0)=\zeta'(0)/2-1-3/2 \gamma<0$ for all $\gamma>0$, since $0\le \zeta'(0)\le 1$ by upper bound \eqref{eq:zeta}.
    In addition, it can be shown ex adverso from the inequality \eqref{eq:zeta} that   $\lim_{y\to + \infty}\zeta^\prime(y)\le 1$. Hence, we get
    ${\Psi}^\prime_1(y)>0$ for every $\gamma>0$ and for  $y$  large enough as well as
    $\lim_{y\to + \infty} {\Psi}^\prime_1(y)= 0$.
Then,  for any $\gamma>0$ we have that either ${\Psi}^\prime_1$ has a zero $y_0\in(0,\infty)$, or $\lim_{y\to y_0-0} {\Psi}^\prime_1(y)<0$, $\lim_{y\to y_0+0} {\Psi}^\prime_1(y)>0$, compare Figure \ref{zeta_plot}.  It follows that \eqref{min_problem_4} has a  point of minimum $y_0$. Therefore, infinitely many solutions of \eqref{min_problem_3} belong to the set 	$\{  {\bf w}\in\bR_+^n: \; l_{\mathbf{X}_c}\left((0, {\bf w})\right) = y_0\}$.
	\end{proof}



    

\begin{example}\label{ex:NonU}
	Assume that the max--stable random field $\rf$ has  Fr\'echet$(\alpha)$ distributed marginals, $\alpha>0$. If $\{X_{t_1},\ldots ,X_{t_n} \}$ are exchangeable then, by Lemma \ref{lem:NonU0}, the prediction problems \eqref{min_problem_l}-\eqref{min_problem_3} have multiple solutions. To give a nontrivial example of this situation, consider an extremal Gaussian random field $\{G_t,\; t\in\bR^2\}$ from \eqref{extremalgaussian_proc_eq} in two dimensions, where the generating stationary Gaussian random field $\{ Y_t, \; t\in\bR^2\}$ is additionally  isotropic. Take the locations $t_1,\ldots, t_n \in\bR^2$ to be the vertices of a regular $n$-gon with its center of gravity at the origin  $t_0={\bf 0}$. Due to isotropy of $\{G_t,\; t\in\bR^2\}$, the random variables $\{G_{t_1},\ldots, G_{t_n} \}$ are exchangeable, and then the forecast of $G_{t_0}$ on the basis of the sample $\{G_{t_1},\ldots, G_{t_n} \}$ is not unique.

    Furthermore, if $\{X_{t_0},\ldots X_{t_n} \}$ are   i.i.d. or $X_{t_j}=X_0$ a.s. for any $i=0,\ldots,n$, then,  by Theorem  \ref{thm:NonU}, the prediction problems \eqref{min_problem_l}-\eqref{min_problem_3} have infinitely many solutions. Indeed,   it holds $l_{\mathbf{X}_c}((w_0,{\bf w})) = \| (w_0,{\bf w}) \|_1  $ for independent $X_{t_0},\ldots,X_{t_n}$, and  the assumptions of the above theorem are met with $\zeta(y)=y+1$. For instance, the set of solutions to the problem \eqref{min_problem_dep_fct} coincides with the whole simplex $\mathbf{S}_n$. 
		For the case of complete dependence $X_{t_j}=X_0$,  $i=0,\ldots,n$,  the tail dependence function reads
		$l_{\mathbf{X}_c} ((w_0,{\bf w})) = \| (w_0,{\bf w}) \|_{\infty}$ satisfying the conditions of Theorem \ref{thm:NonU} with $\zeta(y) = y \vee 1,$ see Figure \ref{zeta_plot}. Here, the same forecast    $\eX_\mathbf{\lambda }=  X_0 \cdot \left( \| \hat{\mathbf{\lambda}}\|_\infty \vee 1 \right)$ is realized at an infinite number of  $\hat{\mathbf{\lambda}}\in\bR^n_+$ satisfying $\| \hat{\mathbf{\lambda}}\|_\infty=1$.
			\end{example} 

\begin{figure}
	\begin{center}
	\includegraphics[scale=0.8]{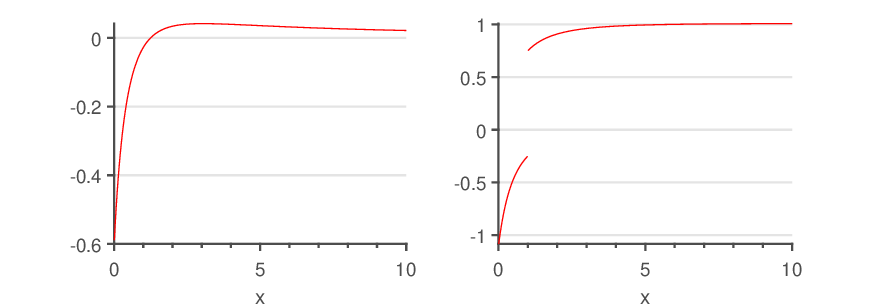}
	\end{center}
	\caption[Plots of the Derivatives of Target Functional]{The derivative ${\Psi}^\prime_1(y)$ of the function \eqref{min_problem_4} with $\gamma=1$ for  $\zeta(y) = y+1$ (left) and $\zeta(y) = y\vee 1$ (right), compare the proof of Theorem \ref{thm:NonU} and Example \ref{ex:NonU}.}\label{zeta_plot}
\end{figure}

\section{Minimization of target functionals via stochastic gradient descent}\label{sec6}

To approximate a solution of \eqref{min_problem}, we may use, alongside with a number of standard optimization methods, a stochastic  gradient descent (SGD). The advantage compared to the classical (batch) gradient descent (see \cite{saad1999}) lies in much faster computation. Both require the existence of a gradient but as one can quickly see, the function $\Phi$ is only piece-wise differentiable, since the summands $Q_j$ contain maximum functionals. Let us define the set of non-differentiability points of $Q_j$ by
\begin{subequations}\label{eq:ups}
\begin{align}
\Upsilon_j
&:=\left\{\mathbf{\lambda}\in\bR^n_+:X_{t_0+k_j}=M(\mathbf{\lambda},\mathbb{T}_f+k_j)\right\} \label{eq:ups1}\\
&\cup\left\{\mathbf{\lambda}\in\bR^n_+:\mathbf{\lambda}_jX_{t_j}=\mathbf{\lambda}_lX_{t_l}=M(\mathbf{\lambda},\mathbb{T}_f+k_m)\right. \notag\\
&\qquad\left.\text{ for some }t_j,t_l\in\mathbb{T}_f+k_m,t_j\neq t_l,m\le j\right\} \label{eq:ups2}\\
&\cup\left\{\mathbf{\lambda}\in\bR^n_+:M(\mathbf{\lambda},\mathbb{T}_f+k_j)=M(\mathbf{\lambda},B_j)\right\} \label{eq:ups3}\\
&\cup\left\{\mathbf{\lambda}\in\bR^n_+:\mathbf{\lambda}_j\tilde{X}_{t_j}=\mathbf{\lambda}_l\tilde{X}_{t_l}=M(\mathbf{\lambda},B_j)\right. \notag\\
&\qquad\left.\text{ for some }t_j,t_l\in\mathbb{T}_f+k_j,t_j\neq t_l\right\} \label{eq:ups4}\\
&\cup\left\{\mathbf{\lambda}\in\bR^n_+:M(\mathbf{\lambda},\mathbb{T}_f+k_m)=M(\mathbf{\lambda},\mathbb{T}_f+k_j)\right. \notag\\
&\qquad\left.\text{ for some }m<j\right\} \label{eq:ups5}
\end{align}
\end{subequations}
and put $\Upsilon:= \Upsilon_1\cup\ldots\cup \Upsilon_N$. Note that for the non-bootstrap version, the set of non-differentiability points of $Q_j$ is concentrated on the subsets \eqref{eq:ups1}, \eqref{eq:ups2}, and \eqref{eq:ups5}, whereas   it consists of the subsets \eqref{eq:ups1}, \eqref{eq:ups2}, \eqref{eq:ups3} and \eqref{eq:ups4} for the bootstrap version. Since the joint distribution of random variables $X_{t_0+k_j}$, $X_{t_j}$, $X_{t_l}$, $M(\mathbf{\lambda}, \mathbb{T}_f+k_j)$ is absolutely continuous, it holds $\bP(\Upsilon) = 0$. Hence, the gradient of $\Phi$ is almost surely given by $\nabla \Phi(\mathbf{\lambda}): =(1/N) \sum_{j=1}^N \nabla Q_j(\mathbf{\lambda})$, where the gradients of the summands $Q_j$ are  
\begin{equation}\notag
	\nabla Q_j(\mathbf{\lambda}) :=  \begin{pmatrix}
		\nabla q_j^{(1)}(\mathbf{\lambda}) - \nabla p_j^{(1)}(\mathbf{\lambda})\\
		\vdots\\
		\nabla q_j^{(n)}(\mathbf{\lambda}) - \nabla p_j^{(n)}(\mathbf{\lambda})\\
	\end{pmatrix}.
\end{equation}
In case of formulation \eqref{constraint2} we have
\begin{equation}\label{eq:q}
	\begin{split}
		\nabla q^{(i)}_j(\mathbf{\lambda}) := &\biggl[2\one\{X_{t_0+k_j} < \mathbf{\lambda}_i X_{t_i+k_j}\}-1\\
        &- \gamma \one\left\{M(\mathbf{\lambda}, B_j) < M(\mathbf{\lambda},\mathbb{T}_f+k_j)\right\}\\
		&+2\gamma \exp\left(-(\mathbf{\lambda}_i X_{t_i+k_j})^{-\alpha}\right)\biggr] h_{\alpha}\left(\mathbf{\lambda}_i X_{t_i+k_j}\right) X_{t_i+k_j} \\
		&\cdot\one\left\{\mathbf{\lambda}_i X_{t_i+k_j} = M(\mathbf{\lambda},\mathbb{T}_f+k_j)\right\},
	\end{split}
\end{equation}
and
\begin{equation}\label{eq:p}
	\begin{split}
		\nabla p^{(i)}_j(\mathbf{\lambda}) := &\gamma \one\left\{M(\mathbf{\lambda}, B_j) > M(\mathbf{\lambda}, \mathbb{T}_f+k_j)\right\}\\
		&\cdot h_{\alpha}(\mathbf{\lambda}_i \tilde{X}_{t_i+k_j}) \tilde{X}_{t_i+k_j}\one\left\{\mathbf{\lambda}_i \tilde{X}_{t_i+k_j} = M(\mathbf{\lambda}, B_j)\right\}\\
	\end{split}
\end{equation}
for $i = 1,\ldots,n$, $j=1,\ldots, N$, where
\begin{equation}\notag
	h_{\alpha}(x) := \alpha x^{-1-\alpha} \exp(-x^{-\alpha}), \hspace{0.2cm} x>0
\end{equation}	
is the probability density function of the Fr\'echet$(\alpha)$ distribution. For formulation \eqref{constraint3}, we obtain
\begin{equation}\notag
	\begin{split}
		\nabla q^{(i)}_j(\mathbf{\lambda}) :&= \biggl[2\one\{X_{t_0+k_j} < \mathbf{\lambda}_i X_{t_i+k_j}\} - 1 + 2\gamma \exp\left((\mathbf{\lambda}_i X_{t_i+k_j})^{-\alpha}\right) - \frac{\gamma}{N}\\
		&- \frac{2\gamma}{N} \sum\limits_{m=1}^{j-1} \one\left\{M(\mathbf{\lambda}, \mathbb{T}_f+k_m) < M(\mathbf{\lambda}, \mathbb{T}_f+k_j)\right\}\biggr]\\
		&\cdot h_{\alpha}\left(\mathbf{\lambda}_i X_{t_i+k_j}\right) X_{t_i+k_j}\one\{\mathbf{\lambda}_i X_{t_i+k_j} = M(\mathbf{\lambda},\mathbb{T}_f+k_j)\}
	\end{split}
\end{equation}
and
\begin{equation}\notag
    \begin{split}
        \nabla p_j^{(i)}(\mathbf{\lambda}):&= \frac{2\gamma}{N} \sum\limits_{m=1}^{j-1}  \one\left\{M(\mathbf{\lambda}, \mathbb{T}_f+k_m) > M(\mathbf{\lambda}, \mathbb{T}_f+k_j)\right\} \\
		 &\cdot h_{\alpha}\left(\mathbf{\lambda}_i X_{t_i+k_m}\right) X_{t_i+k_m}\one\{\mathbf{\lambda}_i X_{t_i+k_m} = M(\mathbf{\lambda},\mathbb{T}_f+k_m)\}
    \end{split}
\end{equation}
for $i = 1,\ldots,n$, $j =1,\ldots,N$.


Apart from the gradient, we also need an initial value $\mathbf{\lambda}^{(0)}\in\bR^n_+$, a step size $\eta>0$ and an update function $f_k:\bR^k\to\bR$ for every iteration. In the classical SGD, one sets $f_k(x_1,\ldots,x_k)=x_k$. Denote by $ {\cal U}\{1,\ldots,N\}$ the discrete uniform distribution on $\{1,\ldots,N\}$.
\begin{algorithm}
\caption{Stochastic Gradient Descent for minimization problem \eqref{min_problem}}\label{sgd_algorithm}
\begin{algorithmic}[1]
\Require $\mathbf{\lambda}^{(0)}\in\bR^n_+$, $\eta\in (0,\infty)$

\State $k\gets0$; $\Phi^*\gets\Phi(\mathbf{\lambda}^{(0)})$ 
\While{\text{Convergence Criterion is not fulfilled}}
	\State $j_k \sim {\cal U}\{1,\ldots,N\}$
	\State $\mathbf{\lambda}^{(k+1)} \gets \mathbf{\lambda}^{(k)} - \eta f_k\left(\nabla Q_{j_l}(\mathbf{\lambda}^{(l)}), l=1,\ldots,k\right)$
    \State \textbf{if} $\Phi(\mathbf{\lambda}^{(k+1)})<\Phi^*$ \textbf{then} $\Phi^*\gets \Phi(\mathbf{\lambda}^{(k+1)})$ 
    \State $k \gets k+1$
\EndWhile\\
\Return $\mathbf{\lambda}^*$
\end{algorithmic}
\end{algorithm}

For our numerical experiments, we choose $\mathbf{\lambda}^{(0)}=(1,\dots,1)\in\bR^n_+$ as a starting value because it  gives   the same significance to each observation of the forecast sample. Practically, classical SGD with step size $\eta=0.1$ works well   and tends to perform better than \textsf{R}'s standard solvers like \texttt{optim} and \texttt{nlminb} in the sense that a lower value $\Phi^*$ of the objective function is achieved. The drawback is that we observe a strong oscillation around $\Phi^*$. The choice of  optimal value of $\eta$ still remains open. Although, for small sample sizes $n$, standard solvers work reasonably well and are even faster than the stochastic gradient descent, they become inferior to SGD once $n$ is large. As a Convergence Criterion, we use early stopping with patience parameter $p$, i.e., the algorithm stops if $\Phi^*$ does not improve for $p$ consecutive iterations.    The convergence of SGD methods is already well studied, see e.g. papers \cite{saad1999,garrigos2024handbookconvergencetheoremsstochastic, reddi2019convergenceadam,JMLR:v23:20-1438}. Hence, we skip the convergence analysis here. 

From Section \ref{sec4} we know that a solution to our forecast problems always exists, but (as shown in Section \ref{sec5}) it does not have to be unique. In order to give motivation that the solution might be unique in some special cases, we performed experiments where we extrapolated one step ahead using a forecast sample with two observations, i. e. $n=2$. We evaluated the resulting objective function on a regular grid with $40000$ points and created surface plots that can be seen on   Figure \ref{sgd_plot} (left).  Figure \ref{sgd_plot} (right) shows the corresponding iteration steps of the Adam algorithm \cite{kingma2014adam} where $f_k$ depends on the whole gradient pre-history.   In case of larger forecast sample sizes $n$, it performs faster than the classical SGD.


Since the target functional $\Phi$ has to be minimized on $\bR^n_+$, Algorithm \ref{sgd_algorithm} needs an additional back--projection step lifting each $\mathbf{\lambda}^{(k+1)}$ to $\bR^n_+$. Alternatively, one could use the reparametrization $\mathbf{\lambda}=e^{\mathbf{\tau}}$ and find $\mathbf{\tau}^*=\argmin_{\mathbf{\tau}\in\bR^n}\Phi(e^{\mathbf{\tau}})$. The gradient with respect to $\mathbf{\tau}$ is given by $\nabla_{\mathbf{\tau}}\Phi(e^{\mathbf{\tau}})=\nabla_{\mathbf{\lambda}}\Phi(\mathbf{\lambda})\odot\mathbf{\lambda}$, where $\odot$ represents component-wise multiplication. We use the latter option because it leads to lower values of the target functional.

\begin{figure}[ht]
  \centering

  \begin{minipage}{0.49\linewidth}
    \centering
    \includegraphics[width=\linewidth]{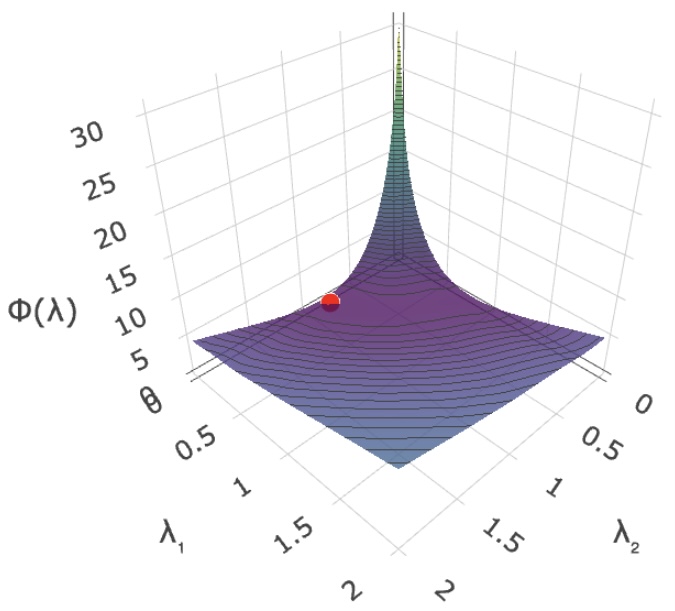}
  \end{minipage}\hfill
  \begin{minipage}{0.49\linewidth}
    \centering
    \includegraphics[width=\linewidth]{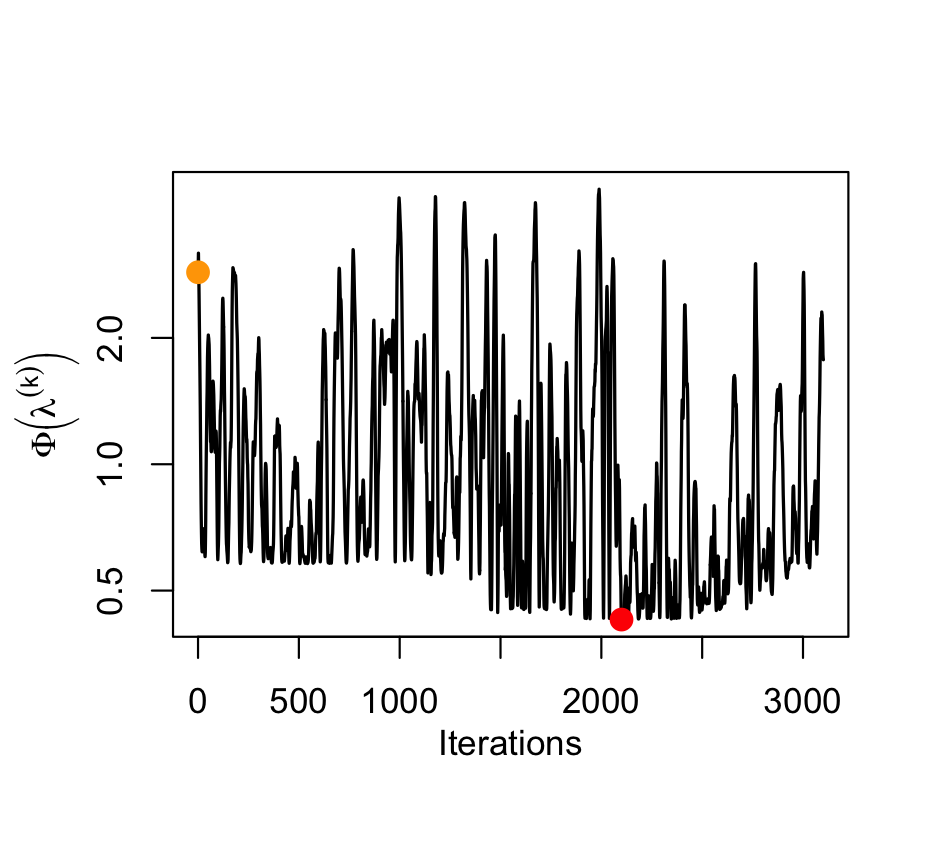}
  \end{minipage}

  \caption{The left plot shows the function $\Phi$ associated to the Brown-Resnick process $B$ from Example \ref{ex:BRP} with volatility $\sigma_B=1.68$, a forecast sample size of $n=2$, $t_1=201$, $t_2=202$, $t_0=203$, $\gamma=100$, $N=100$, and all even shifts $k_j$ from 2 to 198. The minimum value of the grid is located at $(\lambda_1,\lambda_2)=(0.08, 0.83)$ with $\Phi((\lambda_1,\lambda_2))\approx0.4265$. The right plot shows the iterations of the Adam algorithm, which achieved a minimum of $\Phi^*\approx0.4265$ at $(\lambda_1,\lambda_2)\approx(0.078, 0.823)$. The standard solver \texttt{nlminb} from \textsf{R} achieved a minimum of $\Phi^*\approx0.4275$ without being provided with the gradient of $\Phi$.}
  \label{sgd_plot}
\end{figure}

\section{Numerical examples}\label{sec7}  

For max-stable random fields mentioned in Section \ref{subsec24}, we predict their values for $d=1$ and $d=2$ from a simulated sample and also calculate the excursion metric to rate the goodness of this prediction. After that, we apply our forecasting algorithm to predict the extreme values of yearly precipitation in Munich based on observations from 1879 to 2025.

\subsection{Simulation of max-stable data}    


Let $X=\{X_t:t\in\bR\}$ be any of the three types of max-stable processes, which were introduced in Section \ref{subsec24} (Brown-Resnick, Smith, extremal Gaussian). For the Brown-Resnick case, we choose $C(s,t)=\sigma_B^2(s\wedge t)$ as covariance function of the underlying Gaussian process $\{ Y_t, t\in \bR \}$ in  \eqref{brownresnick_proc_eq}. In the extremal Gaussian case, we set the Ornstein-Uhlenbeck covariance function $C(s,t)=\exp{(-\lvert s-t\rvert/\sigma_G)}$ for the underlying Gaussian process  $\{ Y_t, t\in \bR \}$ in \eqref{extremalgaussian_proc_eq}. In the Smith case, we will denote by $\sigma_S^2$ the variance parameter of the normal probability density in \eqref{smith_proc_eq}. 

In order to make the extrapolation results comparable between the three types of processes, we choose the parameters $\sigma_B$, $\sigma_S$ and $\sigma_G$ in such a way that the extremal coefficients $\theta_B$, $\theta_S$, $\theta_G$ of the processes coincide. For the process $X\in\{B,S,G\}$, we recall that  its extremal coefficient function is given by $\theta_X(h)=l_{(X_{t},X_{t+h})}((1,1))$. Using the formulae of the tail dependence functions in Section \ref{subsec24}, the extremal coefficient functions of the Brown-Resnick, Smith and extremal Gaussian processes are given by $\theta_B(h)=2F_0(\sigma_B\sqrt{h}/2)$, $\theta_S(h)=2F_0(\sigma_S^{-1} h/2)$,  $\theta_G(h)=1+\sqrt{1-\exp{(-h/\sigma_G)}}/\sqrt{2}$, respectively. The selected parameters with the corresponding extremal coefficient $\theta_X:=\theta_X(1)$ are given in Table \ref{tab:tab1}.

\begin{table}[htbp]
    \centering
    \begin{tabular}{@{}cccc@{}}
    \toprule
    $\theta_X$ & $\sigma_B$ & $\sigma_S$ & $\sigma_G$  \\
    \midrule
    1.3 & 0.771 & 1.298 & 5.039\\
    1.6 & 1.683 & 0.594 & 0.786\\
    1.7 & 2.073 & 0.482 & 0.256\\ \bottomrule
    \end{tabular}
    \caption{Selected covariance parameters $\sigma_X$ with corresponding extremal coefficients $\theta_X$ for $X\in\{B,S,G\}$.}
    \label{tab:tab1}
\end{table}


We will use \cite[Algorithm 1]{DombEngOest16} to simulate $X$ at locations $\{1,\dots,L\}$, where the length $L$ will depend on the forecast horizon (how far into the future we predict), the size of the forecast sample and the number of learning samples. For our experiments, we  fix the size of the forecast sample $\lvert\mathbb{T}_f\rvert$ to be  $L+1$. First, we perform pre-experiments using the setting $L=1$ to find a suitable choice for the penalty term $\gamma$. Afterwards, we will use those optimal $\gamma$ to test our extrapolation procedure in the setting $L=20$. 


\subsection{Choice of the penalty weight}\label{sec:gamma_choice}
We address the optimal choice of penalty weight $\gamma$ by finding the best tradeoff between the excursion metric and the mean square error (MSE, see Remark \ref{remark:mse}). Let $X$ be any of the three max-stable processes from Section \ref{subsec24}. We simulate $K$ samples $X^{(j)}=\{X^{(j)}_1,\dots,X^{(j)}_{203}\}$, $j=1,\dots,K$ where $X_{t_0}^{(j)}=X_{203}^{(j)}$, $X(\mathbb{T}_f)=\{X_{201}^{(j)},X_{202}^{(j)}\}$ and the remaining 200 observations are used to form 100 non-overlapping learning samples. We then compute $(\hat{\mathcal{E}}_{H_{1}}(X_{t_0},\hat{X}_{\hat{\mathbf{\lambda}}}), \;\gamma\in\{0,1,\dots,20\})$,  compare \eqref{eq:empExcM}, and $(\widehat{\text{MSE}}(\gamma), \;\gamma\in\{0,1,\dots,20\})$ from Remark \ref{remark:mse}. Finally, we pick 
\begin{align}\label{eq:gamma_choice}
    \gamma_{opt}=\arg\min\left(\max\left\{\overline{\hat{\mathcal{E}}_{H_{1}}(X_{t_0},\hat{X}_{\hat{\mathbf{\lambda}}})},\overline{\widehat{\text{MSE}}}(\gamma)\right\}\right),
\end{align}
where $\overline{\mathbf{x}}=(\mathbf{x}-\min \mathbf{x})/(\max \mathbf{x} - \min \mathbf{x})$ for any vector $\mathbf{x}$, and  minimum as well as maximum are understood coordinatewise. The results of these numerical experiments for $\theta_X=1.7$ are displayed  in Figure \ref{fig:gamma_vs_goal}.  There, the excursion metric grows with increasing $\gamma$ for all three types of processes $X$. Note that for the extremal Gaussian case, the excursion metric quickly reaches a value of 0.3, whereas the excursion metric curves for the Brown-Resnick case (left) and the Smith (center) case increase moderately to 0.27. The MSE tends to zero for large $\gamma$ indicating that the penalty term works as intended, compare Remark \ref{rem:ecdf}. The non--monotonicity of the MSE curve in the extremal Gaussian case arises because the ergodic theorem fails for the extremal Gaussian process, so the sample-based penalty term is an inconsistent proxy for the true Wasserstein distance, and moderate penalty weights can be counterproductive before large weights finally force law preservation.
The values $\gamma_{opt}$ 
for each of the three types of processes $X$ and extremal coefficient  values $\theta_X\in\{1.1,\dots,1.9\}$ are given in Table \ref{tab:gam_star}.    

\begin{figure}[htbp]
  \centering
  \begin{minipage}[b]{0.32\textwidth}
    \centering
    \includegraphics[width=\linewidth]{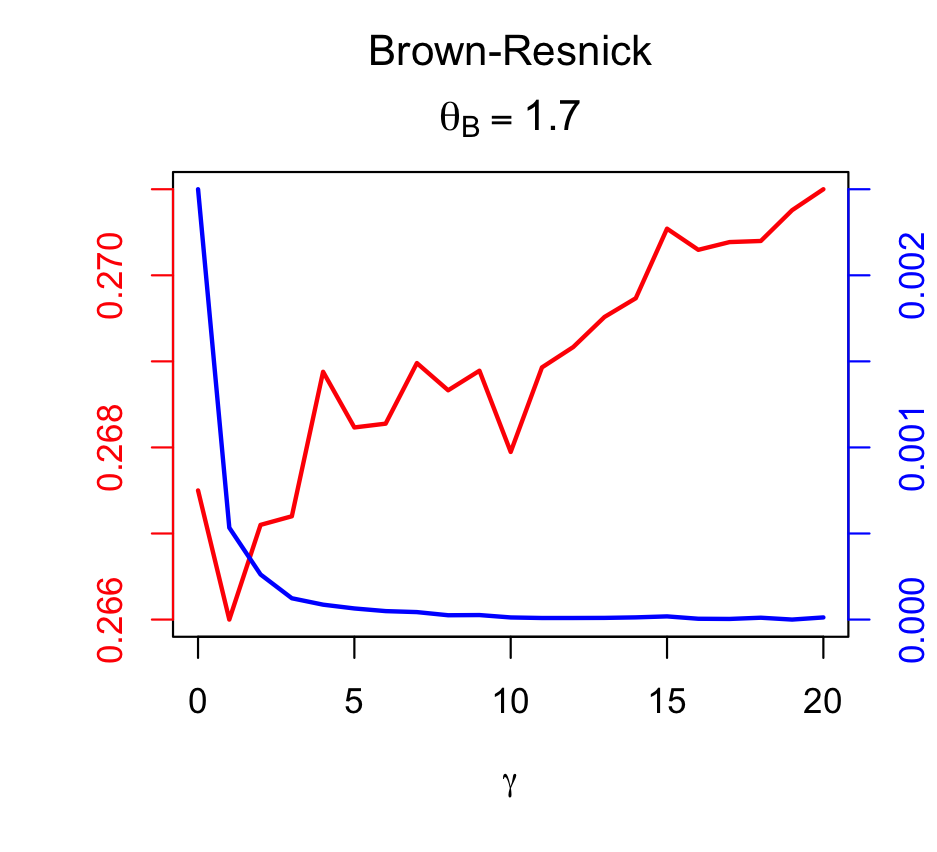}
  \end{minipage}\hfill
  \begin{minipage}[b]{0.32\textwidth}
    \centering
    \includegraphics[width=\linewidth]{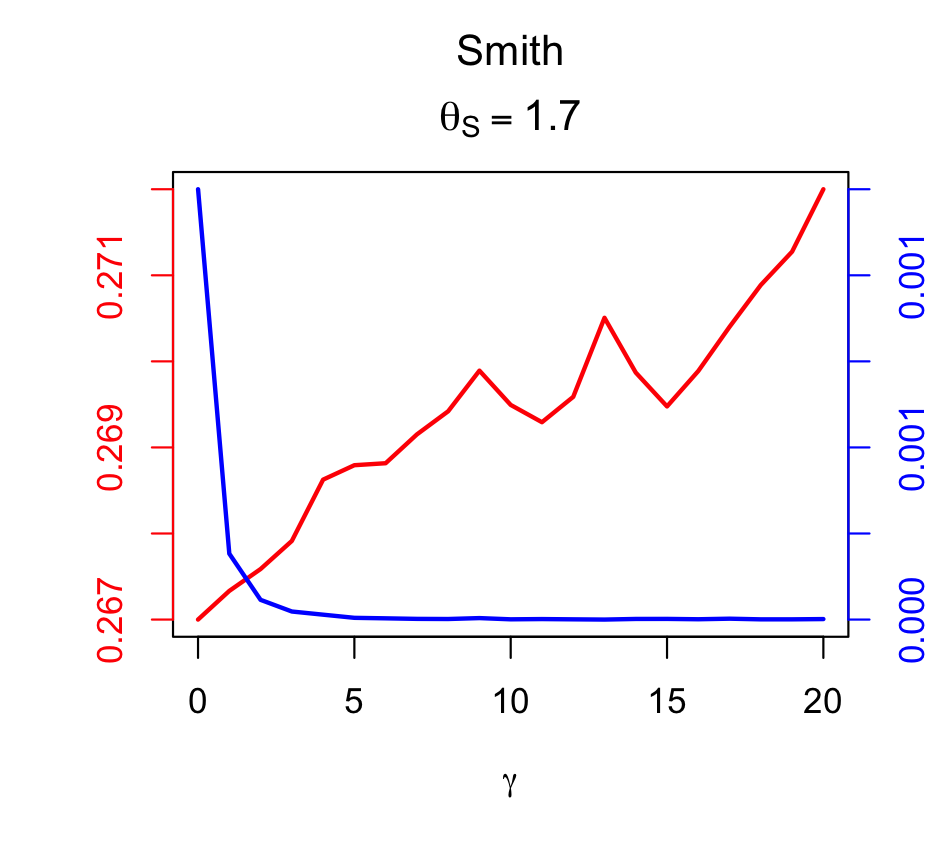}
  \end{minipage}\hfill
  \begin{minipage}[b]{0.32\textwidth}
    \centering
    \includegraphics[width=\linewidth]{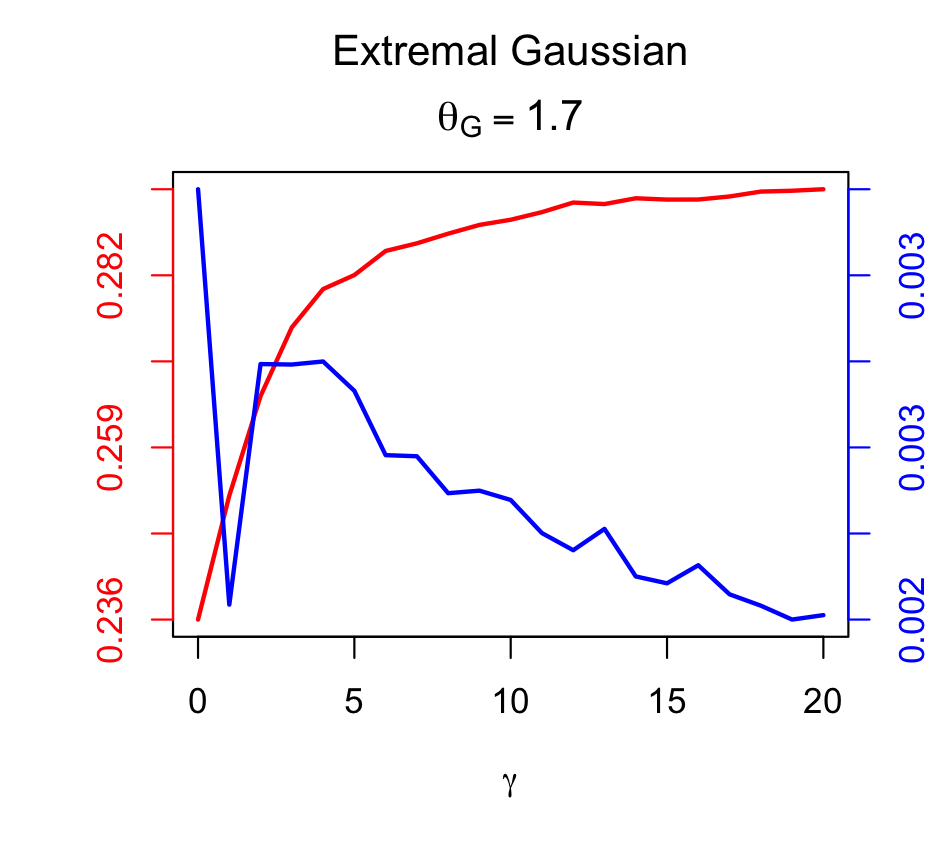}
  \end{minipage}

  \caption{The red line indicates the empirical excursion metric $\overline{\hat{\mathcal{E}}_{H_{1}}(X_{t_0},\hat{X}_{\hat{\mathbf{\lambda}}})}$ and the blue line represents $\overline{\widehat{\text{MSE}}}(\gamma)$ as functions of $\gamma$ for the three max--stable processes $X\in\{B,S,G\}$.  }
  \label{fig:gamma_vs_goal}
\end{figure}

\begin{table}[htbp]
\centering
\begin{tabular}{|c|c|c|c|c|c|c|c|c|c|}
\hline
Process type / \;  $\theta_X$ & 1.1 & 1.2 & 1.3 & 1.4 & 1.5 & 1.6 & 1.7 & 1.8 & 1.9 \\
\hline
Brown-Resnick  $B$    & 0 & 2 & 3 & 2 & 2 & 2 & 1 & 2 & 2 \\
\hline
Smith        $S$     & 0 & 3 & 0 & 3 & 2 & 5 & 2 & 2 & 1 \\
\hline
Extremal Gaussian $G$  & 0 & 0 & 0 & 0 & 0 & 1 & 1 & -  & -  \\
\hline
\end{tabular}
\caption{Choice of $\gamma_{opt}$ for each extremal coefficient $\theta_X\in\{1.1,\dots,1.9\}$ for three types of max-stable processes $X\in\{B,S,G\}$ from Section \ref{subsec24}. Note that the entries for 1.8 and 1.9 in line $G$ are missing because the extremal coefficient $\theta_G$ only lies in the interval $(1,v_l)$ with $v_l=1+1/\sqrt{2}\approx1.71$.}
\label{tab:gam_star}
\end{table}

After having determined optimal penalty weights $\gamma$, we  perform $L=20$-step prediction for each of the three types of processes $X$ using the non-bootstrap formulation \eqref{constraint3}. To do so, we use a forecast sample size of $n=21$ as well as $100$ non-overlapping learning samples of the same size. Therefore, we simulate processes of total time length $100\cdot21+21+20=2141$, where the last $20$ observations are used to compare them with the forecast to assess the quality of the prediction.

\subsection{Performance of max-stable prediction}\label{subsec71}

We performed 20 step predictions for the Brown-Resnick, Smith and extremal Gaussian processes, i.e., $X\in\{B,S,G\}$. Figures \ref{forecast_brownresnick}-\ref{forecast_extremalgaussian} (left) show the results for processes $X$ with extremal coefficient $\theta_X=1.3$ (high interdependence of observations), and the right sides show the results for processes with extremal coefficient $\theta_X=1.7$ (low interdependence of observations).

Using relation \eqref{Em:MC3:eq} from Corollary \ref{cor:exMetr}, rewrite the excursion metric under the law-preserving constraint (i.e., the Gini metric) in terms of the extremal coefficient function $\theta_X(\cdot)$:
\begin{equation}\label{eq:ExcM}
    \mathcal{E}_{H_1}(X_{t_n},X_{t_0})=1-\frac{2}{\theta_X(t_0-t_n)+1}.
\end{equation}
Figures \ref{forecast_brownresnick}-\ref{forecast_extremalgaussian} additionally contain the black curves $t_0\mapsto\mathcal{E}_{H_1}(X_{t_n},X_{t_0})$, which can be seen as a benchmark for the empirical  excursion metrics given in red.
Those are computed by replacing the expectations in \eqref{constraint2} with their empirical counterparts. To do so, a Monte-Carlo simulation using $K=1000$ stochastically independent processes $X^{(j)}$ and their predictions is performed. The empirical version of the excursion metric in \eqref{constraint2}   is then calculated as follows:
    \begin{align}\label{eq:empExcM}
        \hat{\mathcal{E}}_{H_{1}}(X_{t_0},\hat{X}_{\hat{\mathbf{\lambda}}})=\frac{2}{K}\sum_{j=1}^{K}H_{1}\left(X_{t_0}^{(j)}\vee \hat{X}_{\hat{\mathbf{\lambda}}}^{(j)}\right)-\frac{1}{K}\sum_{j=1}^{K}H_{1}\left(\hat{X}_{\hat{\mathbf{\lambda}}}^{(j)}\right)-\frac{1}{2} .
    \end{align}

Forecasting processes with highly dependent observations, we see similar phenomena: the first prediction steps have a low excursion metric, but then it quickly rises to around $0.3$ (recall that the Gini metric of $1/3$ indicates that the predictor is stochastically independent of the real process, and thus the prediction is not better than random). The Smith process reaches an excursion metric of $0.3$ already after the third step, whereas the Brown-Resnick process reaches this value only after the ninth step.

The extremal Gaussian process seemingly performs best in terms of the excursion metric: even after $20$ steps, it still has values of around $0.2$. However, this may be explained by the fact that its predictor $\widehat{G}_{\widehat{\mathbf{\lambda}}}$ is not exactly equal in distribution to $G_{t_0}$ due to non-ergodicity of $G$, and thus the excursion distance $\mathcal{E}_{H_1}(\widehat{G}_{\widehat{\mathbf{\lambda}}},G_{t_0})$ is not equal to the Gini metric. In such cases, its value $1/3$ is not characteristic of stochastic independence, and also lower values of $\mathcal{E}_{H_1}(\widehat{G}_{\widehat{\mathbf{\lambda}}},G_{t_0})$ are possible for nearly stochastically independent $\widehat{G}_{\widehat{\mathbf{\lambda}}}$ and $G_{t_0}$. Additionally, we get
\begin{align*}
    \mathcal{E}_{H_1}(G_{t_n},G_{t_0})&=1-\frac{2}{\theta_G(t_0-t_n)+1}\\&
    =1-\frac{2\sqrt{2}}{2\sqrt{2}+\sqrt{1-\exp{(-(t_0-t_n)/\sigma_G)}}}\\&
    {\longrightarrow}\frac{1}{2\sqrt{2}+1}\approx0.26, \quad t_0\to\infty,
\end{align*}
explaining the asymptotic value to which the theoretical excursion metric  converges for large lags $t$ (black curves in Figure~\ref{forecast_extremalgaussian}).


For processes $X$ with a higher extremal coefficient $\theta_X$, the prediction quality becomes worse, since the observations are almost independent of each other, and thus not much can be learned from previous observations. Thus,  the Brown-Resnick and Smith processes attain values of excursion metric around 0.29 already at the first step. As explained above, the extremal Gaussian process behaves again differently: its excursion metric does not rise to $1/3$ with increasing forecast distance, but it continues to oscillate between 0.21 and 0.24.

An extrapolation using formulation \eqref{constraint3} of 2D random fields $X$ of type $\{ B,S,G \}$ is shown in Figure \ref{forecast_random_field}. There, we observe the field $X$ at locations $\{1,\dots,n\}^2$ and then predict its values at spots
\begin{align*}
    (i,j)&\in\{1,\dots,n\}\times\{n+1,\dots,n+m\}\\
    &\cup\{n+1,\dots,n+m\}\times\{1,\dots,n\}\\
    &\cup\{n+1,\dots,n+m\}\times\{n+1,\dots,n+m\},
\end{align*}
where $m\ge1$ is the forecast horizon. For each target location $(i,j)$, the $m+1$ closest points (according to the Euclidean distance) in $\{1,\dots,n\}^2$ form the forecast sample. It may occur that multiple points in the grid $\{1,\dots,n\}^2$ have the same distance to the target location $z=(i,j)$ outside of the grid, but not all of them can be included in the forecast sample, since it has fixed size $m+1$. In such case, we compute the polar angles of the vectors $\overrightarrow{zz_0}$ for all points $z_0\in\{1,\dots,n\}^2$ that have the same distance to $z$ and use them as a secondary deterministic ordering criterion.

\begin{figure}[htbp]
\centering
\begin{minipage}[t]{0.48\textwidth}
  \centering
  \includegraphics[width=\linewidth]{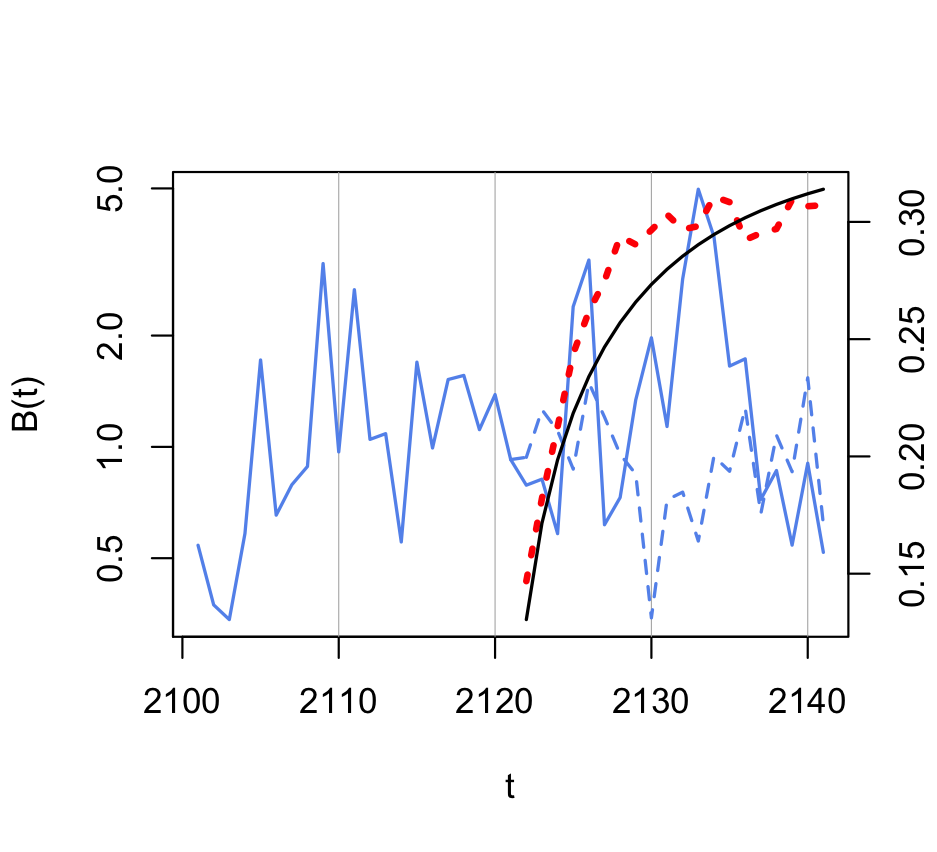}
\end{minipage}\hfill
\begin{minipage}[t]{0.48\textwidth}
  \centering
  \includegraphics[width=\linewidth]{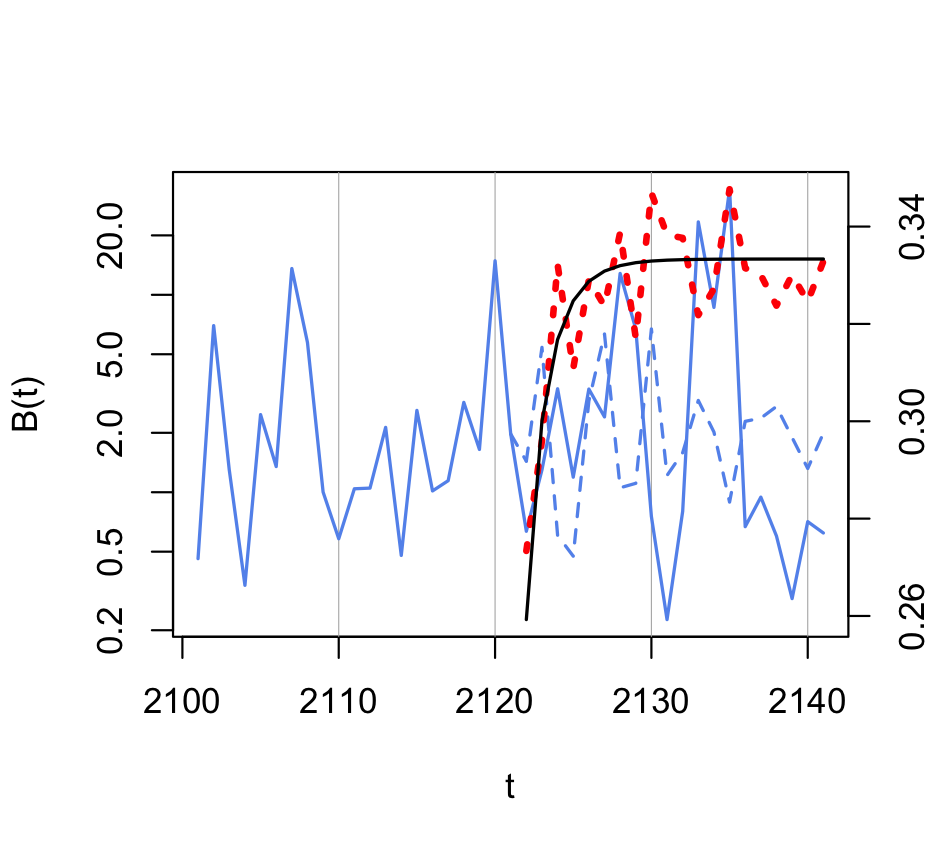}
\end{minipage}

\caption[Prediction of Brown-Resnick Process]{A 20 time steps' forecast (dashed blue line), together with its corresponding theoretical (black line) and empirical (red line) excursion metric, of a Brown-Resnick process $B$ (blue line). The underlying Gaussian process $Y$ of the Brown-Resnick process is a standard Brownian motion with variance parameter $\sigma_B = 0.771$ (left plot) and $\sigma_B=2.073$ (right plot). For the left plot, $\gamma_{opt}=3$ and for the right plot, $\gamma_{opt}=1$, cf. Table \ref{tab:gam_star}.}
\label{forecast_brownresnick}
\end{figure}

\begin{figure}[htbp]
\centering
\begin{minipage}[t]{0.48\textwidth}
  \centering
  \includegraphics[width=\linewidth]{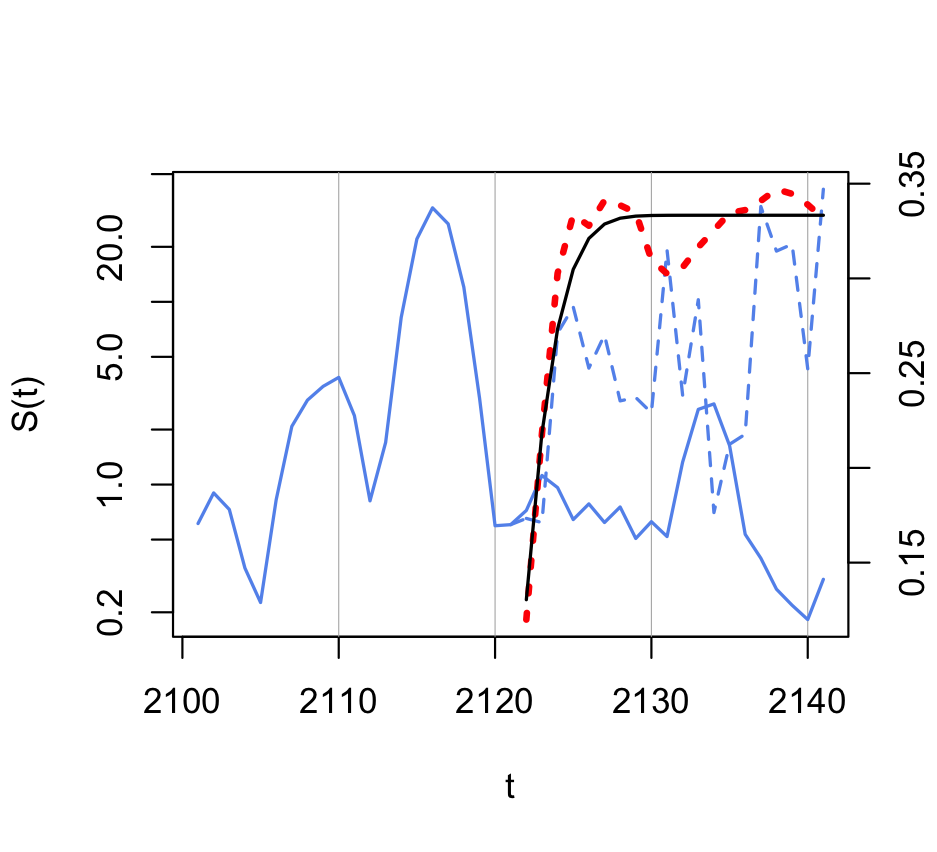}
\end{minipage}\hfill
\begin{minipage}[t]{0.48\textwidth}
  \centering
  \includegraphics[width=\linewidth]{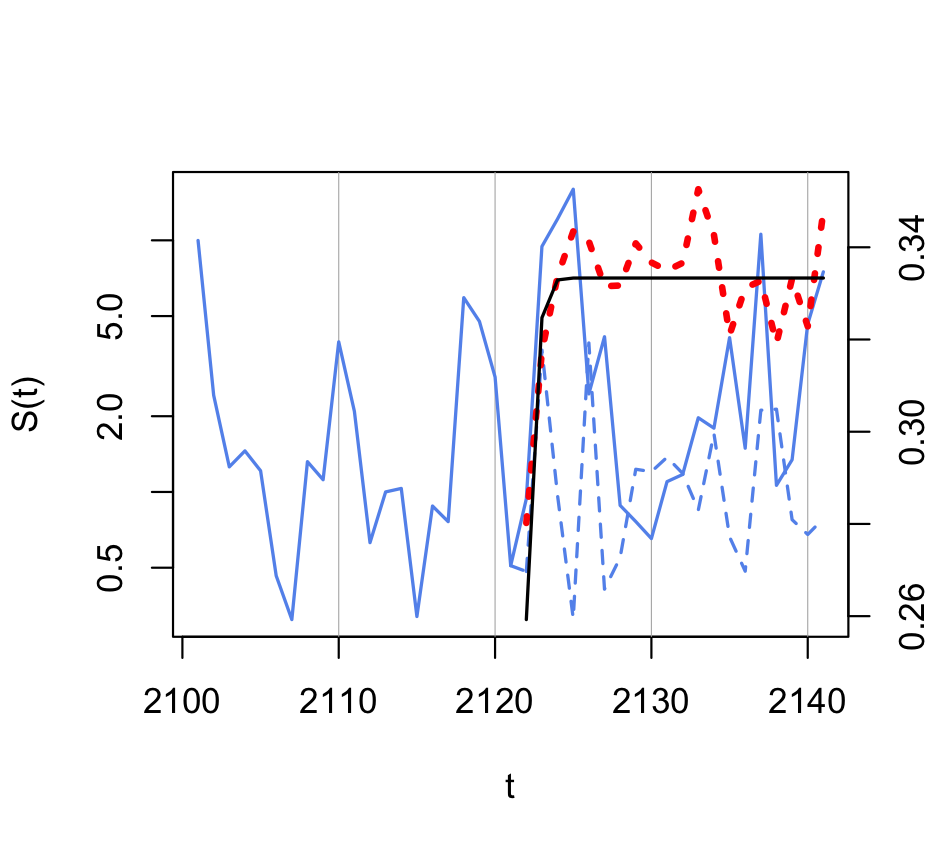}
\end{minipage}

\caption[Prediction of Smith Process]{A 20 time steps' forecast (dashed blue line), together with its corresponding theoretical (black line) and empirical (red line) excursion metric, of a Smith process $S$ (blue line) constructed with a standard deviation of $\sigma_S=1.298$ (left plot) and $\sigma_S=0.482$ (right plot), see \eqref{smith_proc_eq}. For the left plot, $\gamma_{opt}=0$  and for the right plot, $\gamma_{opt}=2$, cf. Table \ref{tab:gam_star}.}
\label{forecast_smith}
\end{figure}
 
\begin{figure}[ht]
\centering
\begin{minipage}[t]{0.48\textwidth}
  \centering
  \includegraphics[width=\linewidth]{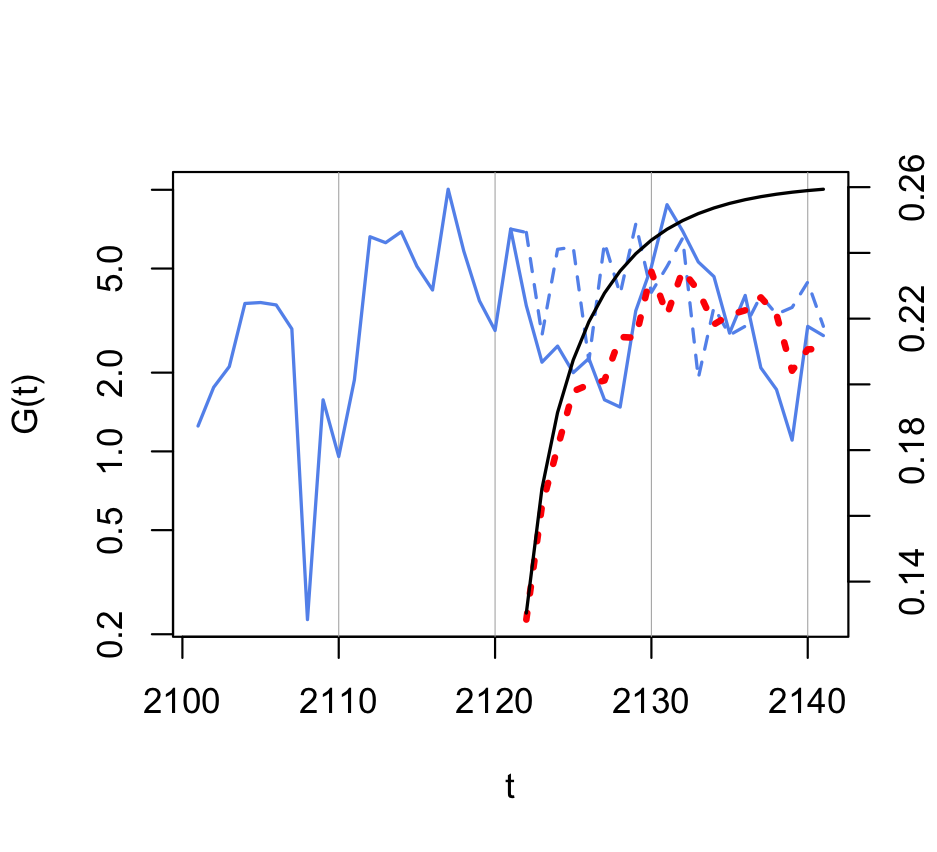}
\end{minipage}\hfill
\begin{minipage}[t]{0.48\textwidth}
  \centering
  \includegraphics[width=\linewidth]{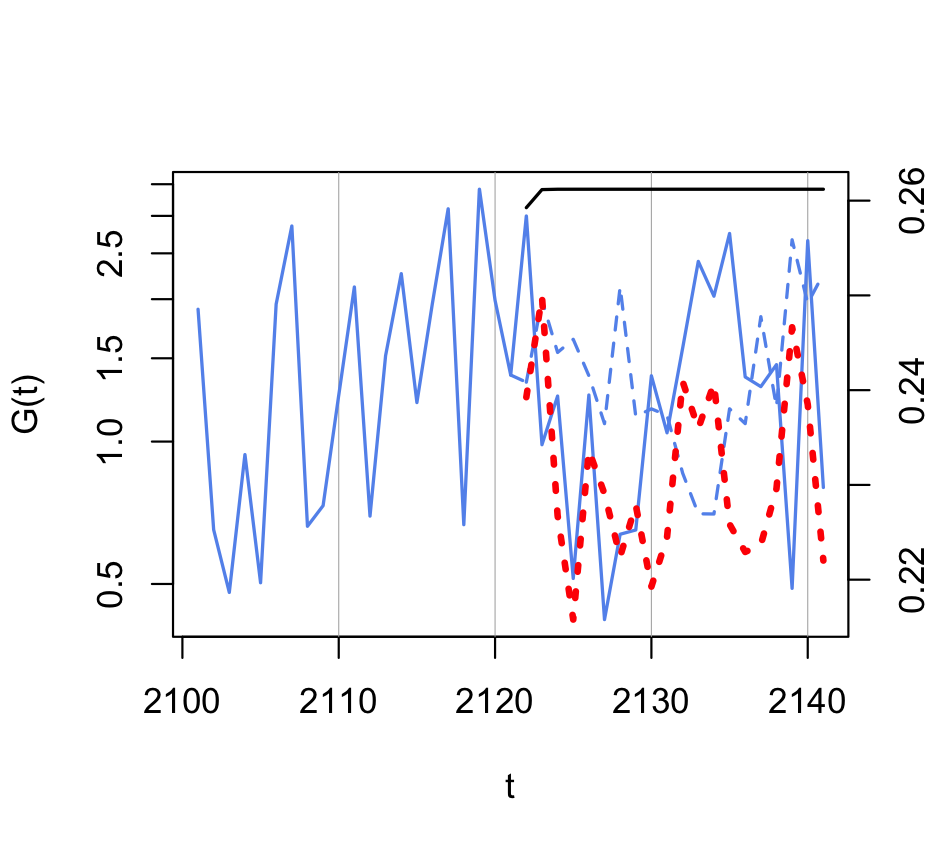}
\end{minipage}

\caption[Prediction of Extremal Gaussian Process]{A 20 time steps' forecast (dashed blue line), together with its corresponding theoretical (black line) and empirical (red line) excursion metric, of an extremal Gaussian process $G$ (blue line). The underlying process $Y$ of $G$ in \eqref{extremalgaussian_proc_eq} is Gaussian with Ornstein--Uhlenbeck covariance function $C(t)= \exp(-\lvert t\rvert /\sigma_G)$, where $\sigma_G=5.039$ (left plot) and $\sigma_G=0.256$ (right plot). For the left plot, $\gamma_{opt}=0$ and for the right plot, $\gamma_{opt}=1$, cf. Table \ref{tab:gam_star}.}
\label{forecast_extremalgaussian}
\end{figure}

\begin{figure}[htbp]
  \centering

  \begin{minipage}[b]{0.32\textwidth}
    \centering
    \includegraphics[width=\textwidth]{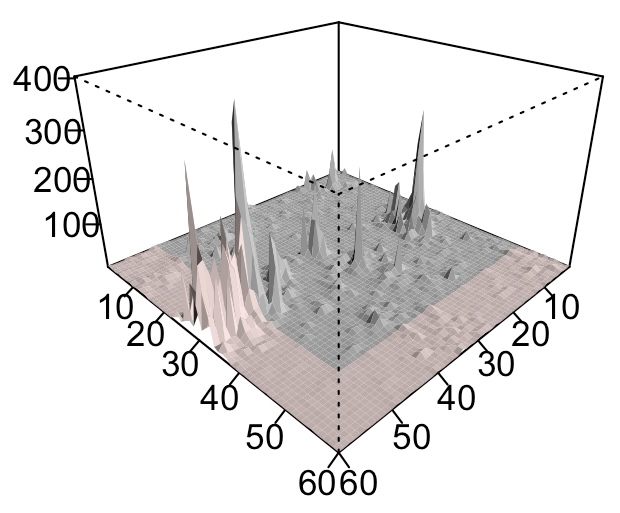} \tiny{\centering{$B$}}
  \end{minipage}\hfill
  \begin{minipage}[b]{0.32\textwidth}
    \centering
    \includegraphics[width=\textwidth]{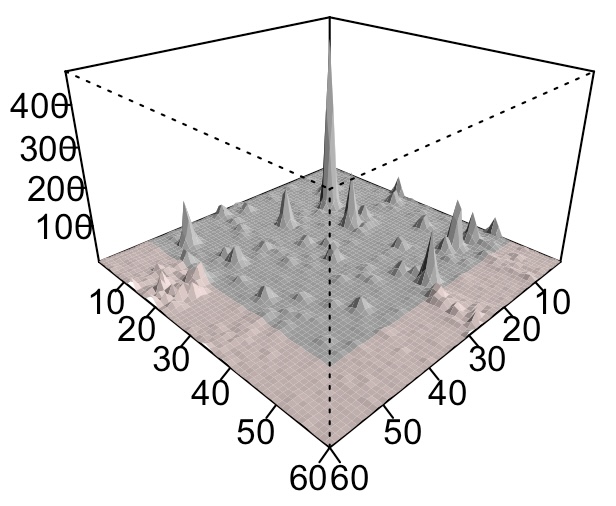}
    \tiny{\centering{$S$}}
  \end{minipage}\hfill
  \begin{minipage}[b]{0.32\textwidth}
    \centering
    \includegraphics[width=\textwidth]{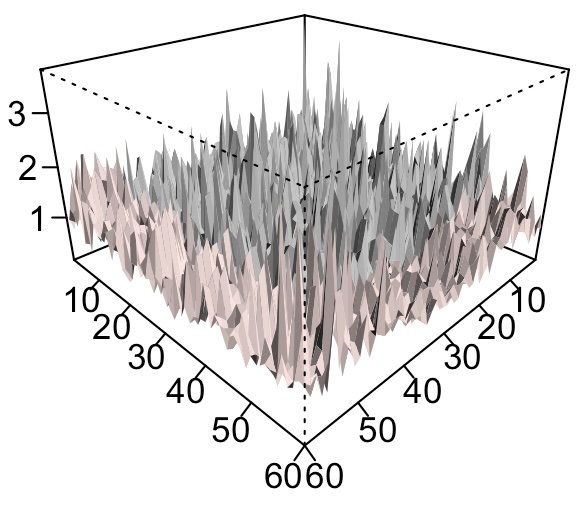}
    \tiny{\centering{$G$}}
  \end{minipage}

  \caption[Prediction of 3D Random Fields]{A ten steps' forecast of random fields $X\in\{B,S,G\}$ in both directions $t_1$ and $t_2$. After observing true values $X_t$  at locations $t \in \{1,\ldots,50\}\times \{1,\ldots,50\}$, the predictor extended the random surface $X$ to $t \in \{1,\ldots,60\}\times \{1,\ldots,60\}$. The Brown-Resnick field $B$ (left) is simulated using $\sigma_B=1.683$, the Smith field $S$ (center) is simulated using $\Sigma=\sigma_S^2I_2$ with $\sigma_S=0.594$ and the extremal Gaussian field $G$ (right) is simulated using $\sigma_G=0.786$. As penalty weights,  the values $\gamma_{opt}$ for $\theta_X=1.6$ from Table \ref{tab:gam_star} are used.}
  \label{forecast_random_field}
\end{figure}




\subsection{Application to annual daily rainfall maxima}\label{subsec72}

The purpose of this section is  to apply the above prediction methodology to forecast the annual amounts of daily rainfall in Munich, Germany. 
The historical rainfall data comes from the National Centers for Environmental Information  \cite{noaa_ncei_website}. We pick the data from the weather station in Munich Nymphenburg because it has the most observations. For some years, the observations are missing. In this case, we fill the missing values by taking the mean of the maximum rainfall data from other weather stations in Munich.
Our goal to predict the maximums for the years 2023-2025 using the observations made in 1879-2022 and compare the forecasts with recently observed values.

Suppose that the annual rainfall maxima $X_1,\ldots,X_M$ observed in some region over $M$ years form a stationary time series. Assuming this, we neglect long term trends such as climate change. In most real life applications,  the true univariate distribution of a stationary time series is not known and needs to be estimated. According to \cite{papalexiou2013}, the Fr\'echet distribution seems to be the best model (out of the three possible extreme value distributions) to fit the time series of annual maxima of daily precipitation. The parameters $(\alpha,\mu,\sigma)$ of the Fr\'echet distribution  $H_{\alpha}(\cdot, \sigma)$ from \eqref{DefG} shifted by $\mu$ can be assessed, e.g. by the maximum likelihood (ML) method \cite{embrechts1997}.
 As random variables $X_j$ are not stochastically independent, we can only calculate quasi ML estimates of the parameters. These are given by
\begin{equation}\notag
	(\hat{\alpha}, \hat{\mu}, \hat{\sigma}) = \!\!\!\! \argmax\limits_{\alpha,\sigma \in\bR^+, \mu\in\bR} \biggl[M \log\biggl(\frac{\alpha}{\sigma}\biggr)  -\sum\limits_{j=1}^M \biggl(\frac{X_j-\mu}{\sigma}\biggr)^{-\alpha} \!\!\!\! - (\alpha+1)\sum\limits_{j=1}^M \log\biggl(\frac{X_j-\mu}{\sigma}\biggr)\biggr].
\end{equation}

The time series of yearly rainfall maxima with its empirical and fitted Fr\'echet  univariate distributions are given in Figure \ref{annual_rainfall_and_fitted_distribution}. As $\hat{\alpha}>2$, the marginal distribution is not heavy-tailed. However, our excursion-based  prediction is applicable here, too. In addition, it has the law-preserving advantage as compared to kriging. The fitted Fr\'echet$(\hat{\alpha},\hat{\mu},\hat{\sigma})$ distribution is used as c.d.f. $F$ in the excursion metric $ \mathcal{E}_{F}$ to forecast the above time series $X=\{ X_t, t\in\bN \}$.

 As a forecast sample, we take the observations $X_t$ with years $t$ ranging from 2019 to 2022. The remaining 140 past observations $X_t$, $t=1879,\ldots, 2018$ are used to form 35 learning samples of size 4. This particular setup has the advantage that we use all 147 observations (3 step prediction using a forecast sample and 35 non-overlapping learning samples of size 3).  For the prediction, the penalty weight $\gamma=2$ was set, since the yearly precipitation maximums are very weakly dependent, compare their empirical autocorrelation function in Figure \ref{rainfall_prediction} (left)    and  Table \ref{tab:gam_star} for $\theta_X=1.9$. The forecast results are shown in Figure \ref{rainfall_prediction} (right). As one can see, the true trajectory (solid blue line) of the annual maximum precipitation time series $X$ lies almost entirely within the forecast envelope (given in yellow), which justifies the quality of the forecasts. 



\begin{figure}[htbp]
    \centering
    \begin{minipage}{0.50\textwidth}
        \centering
        \includegraphics[width=\linewidth]{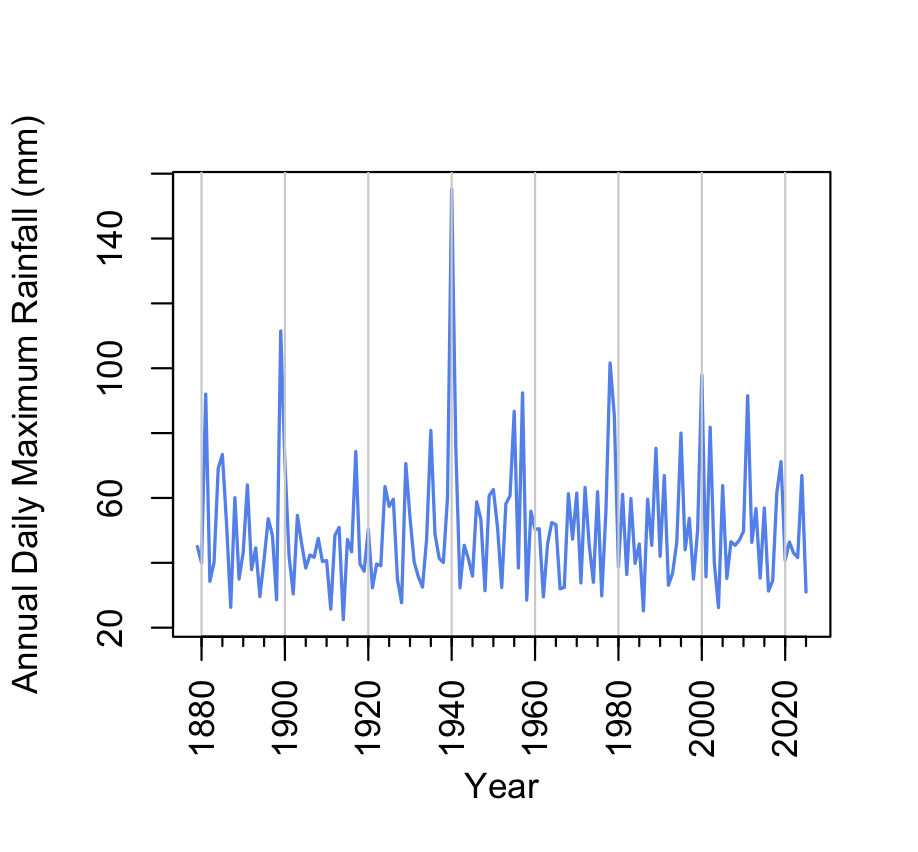}
    \end{minipage}\hfill
    \begin{minipage}{0.50\textwidth}
        \centering
        \includegraphics[width=\linewidth]{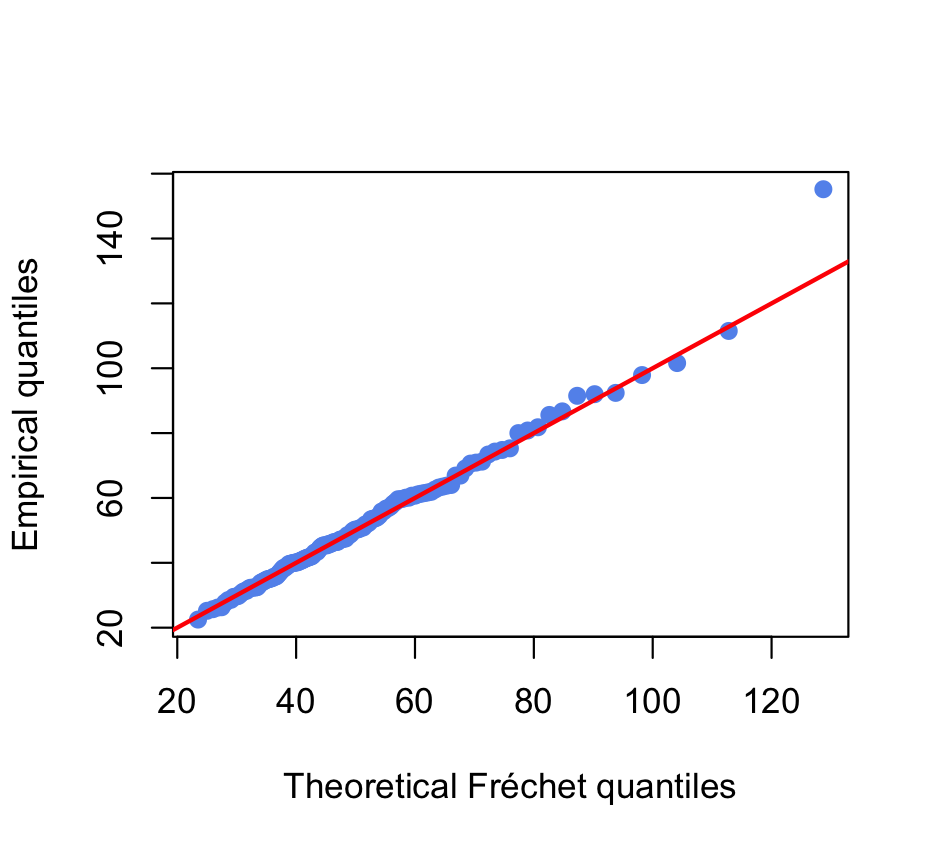}
    \end{minipage}
    \caption{Left:   the yearly maximum of daily rainfall in Munich, Germany from 1879 to 2025. Right: Q-Q plot of the empirical rainfall data against the theoretical Fréchet quantiles with quasi-ML-estimated parameters $\hat{\alpha}\approx7.5263$, $\hat{\mu}\approx-51.4312$ and $\hat{\sigma}\approx92.7826$.}
    \label{annual_rainfall_and_fitted_distribution}
\end{figure}


\begin{figure}[htbp]
    \centering
    \begin{minipage}{0.50\textwidth}
        \centering
      \includegraphics[scale=0.19]{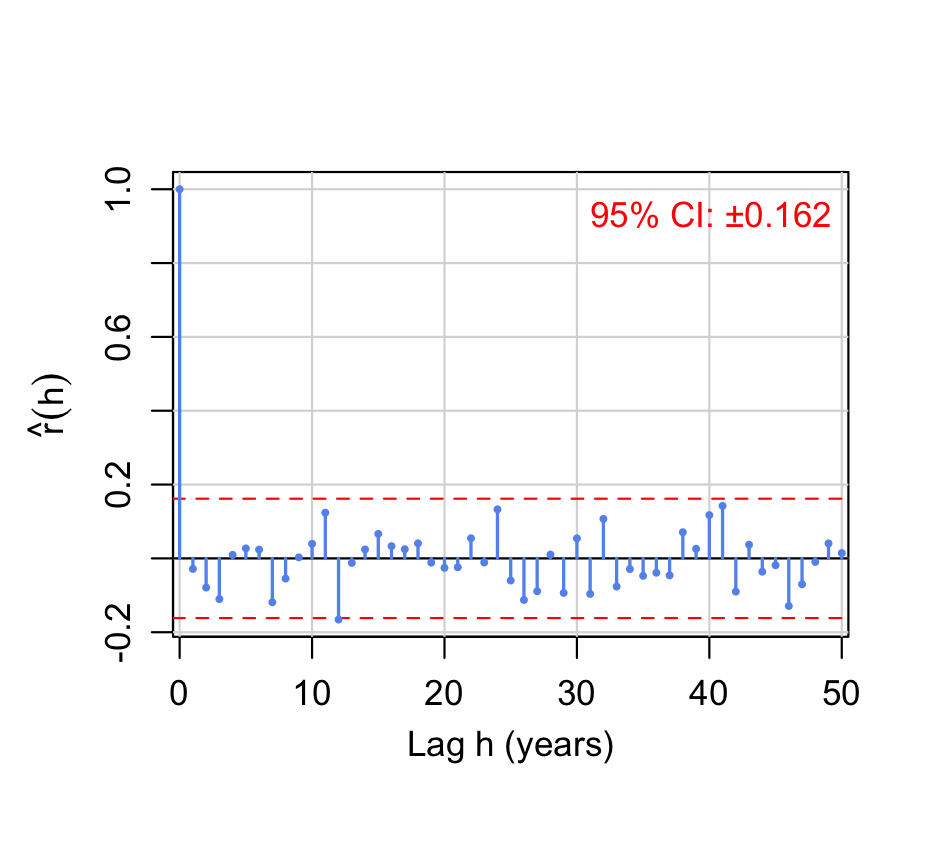}
    \end{minipage}\hfill
    \begin{minipage}{0.50\textwidth}
        \centering
      \includegraphics[scale=0.19]{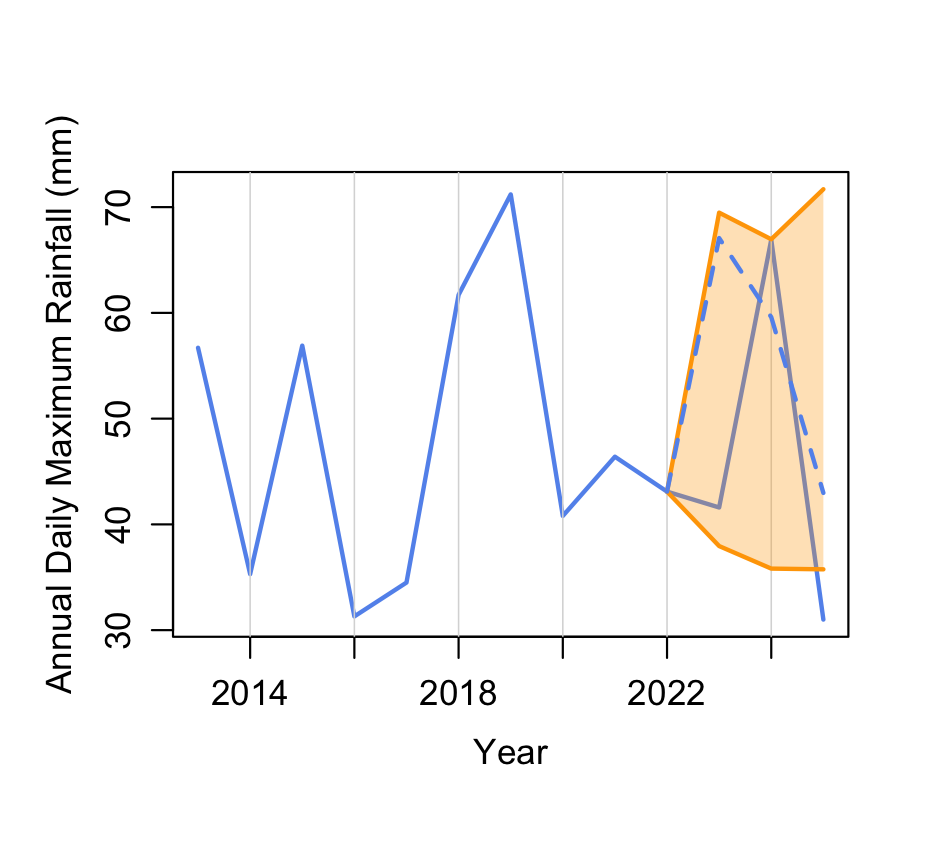}
    \end{minipage}
    \caption{Left: Correlation function for the time series of annual daily rainfall maxima in Munich.  The red dashed lines represent the approximate 95\% confidence region under the null hypothesis of zero autocorrelation. Since most bars are inside this region, there is no strong evidence of significant autocorrelation. Right:  Forecasts for the annual daily rainfall maxima in the years 2023 to 2025. Data of the years 1879-2022 was used in learning samples. The true (observed) time series is shown by the solid blue line. The orange lines mark the maximum and minimum of 100 forecasts using the max-stable predictor \eqref{constraint2} with bootstrap. The dashed blue line yields the forecast using the non-bootstrap formulation \eqref{constraint3}. For every extrapolation, 35 learning samples of size 4 containing data of the years 1879-2018 and a forecast sample containing precipitation data in the years 2019-2022 were used.}
    \label{rainfall_prediction}
\end{figure}


\section{Summary}\label{sec8}
We propose a simple method to predict stationary heavy tailed max-stable random fields. The  advantages lie in its implementational ease, fast computation times and the reliability of its results. Besides, we offer two alternative formulations of the forecast which can be used in different ways. The bootstrapping variant yields randomized forecasts  which can be used to create  confidence  envelopes (bands) that are very likely to include unknown time series values. On the other hand, the formulation without bootstrap produces a unique forecast with lower excursion metrics.

\section*{Acknowledgments}

Dedicated to the memory of Dr. V. Makogin (1987--2024) who unexpectedly passed away on the 8th of May 2024. 


\clearpage
\bibliographystyle{plain}
\bibliography{literature, MakoginLit}

\end{document}